\newcommand{\C}{\mathcal{C}}
\newcommand{\T}{\mathcal{T}}
\newcommand{\susp}{\widehat{\Sigma}}
\newcommand{\ab}{\mathrm{Ab}}
\newcommand{\ceq}{\mathrel{\mathop:}=}
\newcommand{\calF}{\mathcal{F}}
\newcommand{\catC}{\mathscr{C}}
\newcommand{\nang}{\mathscr{N}}
\DeclareMathOperator{\Hom}{Hom}
\DeclareMathOperator{\Ker}{Ker}
\DeclareMathOperator{\Image}{Im}
\theoremstyle{definition}
\theoremstyle{plain}
\newtheorem{theorem}[subsection]{Theorem}
\newtheorem{lemma}[subsection]{Lemma}
\title{The axioms for $n$-angulated categories}
\author{Petter Andreas Bergh}
\author{Marius Thaule}
\address{Department of Mathematical Sciences, NTNU, NO-7491
  Trondheim, Norway}
\email{bergh@math.ntnu.no}
\email{mariusth@math.ntnu.no}
\begin{document}

\begin{abstract}
  We discuss the axioms for an $n$-angulated category, recently
  introduced by Geiss, Keller and Oppermann in \cite{GKO}.  In
  particular, we introduce a higher ``octahedral axiom'', and show
  that it is equivalent to the mapping cone axiom for an $n$-angulated
  category.  For a triangulated category, the mapping cone axiom, our
  octahedral axiom and the classical octahedral axiom are all
  equivalent.
\end{abstract}

\thanks{The second author was supported by the Norwegian Research
  Council grant Topology, project number 185335/V30.}

\subjclass[2010]{18E30}
\keywords{Triangulated categories, $n$-angulated categories, octahedral axiom}

\maketitle

\section{Introduction}
Triangulated categories were introduced independently in algebraic
geometry by Verdier \cite{Verdier1, Verdier2}, based on ideas of
Grothendieck, and in algebraic topology by Puppe \cite{Puppe}.  These
constructions have since played a crucial role in representation
theory, algebraic geometry, commutative algebra, algebraic topology
and other areas of mathematics (and even theoretical physics).
Recently, Geiss, Keller and Oppermann introduced in \cite{GKO} a new
type of categories, called $n$-angulated categories, which generalize
triangulated categories: the classical triangulated categories are the
special case $n = 3$.  These categories appear for instance when
considering certain $(n - 2)$-cluster tilting subcategories of
triangulated categories.  Conversely, certain $n$-angulated
Calabi--Yau categories yield triangulated Calabi--Yau categories of
higher Calabi--Yau dimension.

The four axioms for $n$-angulated categories are generalizations of
the axioms for triangulated categories.  In this paper, we discuss
these axioms, inspired by works of Neeman \cite{N1,N2}.  First, we
show that the first two of the original axioms can be replaced by two
alternative axioms.  One of these alternative axioms requires that the
collection of $n$-angles be closed under so-called weak isomorphisms,
but not under direct sums and summands.  The other axiom requires that
the collection of $n$-angles be closed only under left rotations, but
not right rotations.  Second, we discuss the axioms that enable us to
complete certain diagrams to morphisms of $n$-angles.  The last of
these axioms says that we can complete diagrams to morphisms of
$n$-angles in such a way that the mapping cone is itself an $n$-angle.
For triangulated categories (that is, when $n = 3$), this axiom is
equivalent to the octahedral axiom, which was one of Verdier's
original axioms.  We show that this generalizes to $n$-angulated
categories.  Namely, we introduce a higher ``octahedral axiom'' for
$n$-angulated categories, and show that this is equivalent to the
mapping cone axiom.  For $n = 3$, that is, for triangulated
categories, our new axiom is almost the same as the classical
octahedral axiom. In fact, it is apparently a bit weaker, but we show
that they are equivalent.  Therefore, for a triangulated category, the
mapping cone axiom, our octahedral axiom and the classical octahedral
axiom are all equivalent.

This paper is organized as follows.  In Section \ref{sec:axioms}, we
recall the definition of $n$-angulated categories from \cite{GKO}.
Then, in Section \ref{sec:axiomsN1N2}, we discuss the first two
axioms, and in Section \ref{sec:axiomN4} we introduce the higher
octahedral axiom.  Finally, in Section \ref{sec:example}, we look at
an example, namely the $n$-angulated categories originating from $(n -
2)$-cluster tilting subcategories of triangulated categories.  We
verify the higher octahedral axiom for these categories, in the case
when $n = 4$.

\section{The Axioms for $n$-Angulated Categories}
\label{sec:axioms}
Throughout this paper, we fix an additive category $\C$ with an
automorphism $\Sigma \colon \C \to \C$, and an integer $n$ greater
than or equal to three. In this section, we recall the set of axioms
for $n$-angulated categories as described in \cite{GKO}.

A sequence of objects and morphisms in $\C$ of the form
\begin{equation*}
  A_1 \xrightarrow{\alpha_1} A_2 \xrightarrow{\alpha_2} \cdots
  \xrightarrow{\alpha_{n - 1}} A_n \xrightarrow{\alpha_n} \Sigma A_1 
\end{equation*}
is called an \emph{$n$-$\Sigma$-sequence}; we shall frequently denote
such sequences by $A_\bullet, B_\bullet$ etc.  The
$n$-$\Sigma$-sequence $A_\bullet$ is \emph{exact} if the induced
sequence
\begin{equation*}
  \cdots \to \Hom_{\C}(B,A_1) \xrightarrow{(\alpha_1)_*}
  \Hom_{\C}(B,A_2) \xrightarrow{(\alpha_2)_*} \cdots
  \xrightarrow{(\alpha_{n-1})_*} \Hom_{\C}(B,A_n)
  \xrightarrow{(\alpha_n)_*} \Hom_{\C}(B, \Sigma A_1) \to \cdots
\end{equation*}
of abelian groups is exact for every object $B \in \C$.  The left and
right \emph{rotations} of $A_{\bullet}$ are the two
$n$-$\Sigma$-sequences
\begin{equation*}
  A_2 \xrightarrow{\alpha_2} A_3 \xrightarrow{\alpha_3} \cdots
  \xrightarrow{\alpha_n} \Sigma A_1 \xrightarrow{(-1)^n
    \Sigma\alpha_1} \Sigma A_2
\end{equation*}
and
\begin{equation*}
  \Sigma^{-1}A_n \xrightarrow{(-1)^n \Sigma^{-1} \alpha_n} A_1
  \xrightarrow{\alpha_1} \cdots \xrightarrow{\alpha_{n - 2}} A_{n-1}
  \xrightarrow{\alpha_{n - 1}} A_n
\end{equation*} 
respectively, and a \emph{trivial} $n$-$\Sigma$-sequence is a sequence
of the form
\begin{equation*}
  A \xrightarrow{1} A \to 0 \to \cdots \to 0 \to \Sigma A
\end{equation*}
or any of its rotations. 

A \emph{morphism} $A_{\bullet} \xrightarrow{\varphi} B_{\bullet}$ of
$n$-$\Sigma$-sequences is a sequence $\varphi =
(\varphi_1,\varphi_2,\ldots,\varphi_n)$ of morphisms in $\C$ such that
the diagram
\begin{center}
  \begin{tikzpicture}[text centered]
    \node (X1) at (0,1.5){$A_1$};
    \node (X2) at (1.5,1.5){$A_2$};
    \node (X3) at (3,1.5){$A_3$};
    \node (Xdots) at (4.5,1.5){$\cdots$};
    \node (Xn) at (6,1.5){$A_n$};
    \node (SX1) at (7.5,1.5){$\Sigma A_1$};

    \node (Y1) at (0,0){$B_1$};
    \node (Y2) at (1.5,0){$B_2$};
    \node (Y3) at (3,0){$B_3$};
    \node (Ydots) at (4.5,0){$\cdots$};
    \node (Yn) at (6,0){$B_n$};
    \node (SY1) at (7.5,0){$\Sigma B_1$};

    \begin{scope}[->,font=\scriptsize,midway]
      \draw (X1) -- node[right]{$\varphi_1$} (Y1);
      \draw (X2) -- node[right]{$\varphi_2$} (Y2);
      \draw (X3) -- node[right]{$\varphi_3$} (Y3);
      \draw (Xn) -- node[right]{$\varphi_n$} (Yn);
      \draw (SX1) -- node[right]{$\Sigma \varphi_1$} (SY1);
      
      \draw (X1) -- node[above]{$\alpha_1$} (X2);
      \draw (X2) -- node[above]{$\alpha_2$} (X3);
      \draw (X3) -- node[above]{$\alpha_3$} (Xdots);
      \draw (Xdots) -- node[above]{$\alpha_{n - 1}$} (Xn);
      \draw (Xn) -- node[above]{$\alpha_n$} (SX1);
      \draw (Y1) -- node[above]{$\beta_1$} (Y2);
      \draw (Y2) -- node[above]{$\beta_2$} (Y3);
      \draw (Y3) -- node[above]{$\beta_3$} (Ydots);
      \draw (Ydots) -- node[above]{$\beta_{n - 1}$} (Yn);
      \draw (Yn) -- node[above]{$\beta_n$} (SY1);
    \end{scope}
  \end{tikzpicture}
\end{center}
commutes.  It is an \emph{isomorphism} if
$\varphi_1,\varphi_2,\ldots,\varphi_n$ are all isomorphisms in $\C$,
and a \emph{weak isomorphism} if $\varphi_i$ and $\varphi_{i + 1}$ are
isomorphisms for some $1 \leq i \leq n$ (with $\varphi_{n + 1} \ceq
\Sigma \varphi_1$).  Note that the composition of two weak
isomorphisms need not be a weak isomorphism.  Also, note that if two
$n$-$\Sigma$-sequences $A_{\bullet}$ and $B_{\bullet}$ are weakly
isomorphic through a weak isomorphism $A_{\bullet}
\xrightarrow{\varphi} B_{\bullet}$, then there does not necessarily
exist a weak isomorphism $B_{\bullet} \to A_{\bullet}$ in the opposite
direction.

The category $\C$ is \emph{pre-$n$-angulated} if there exists a
collection $\nang$ of $n$-$\Sigma$-sequences satisfying the following
three axioms:

\begin{itemize}
\item[{\textbf{(N1)}}]
  \begin{itemize}
  \item[(a)] $\nang$ is closed under direct sums, direct summands and
    isomorphisms of $n$-$\Sigma$-sequences.
  \item[(b)] For all $A \in \C$, the trivial $n$-$\Sigma$-sequence
   \begin{equation*}
     A \xrightarrow{1} A \to 0 \to \cdots \to 0 \to \Sigma A
   \end{equation*}
   belongs to $\nang$.
 \item[(c)] For each morphism $\alpha \colon A_1 \to A_2$ in $\C$,
   there exists an $n$-$\Sigma$-sequence in $\nang$ whose first
   morphism is $\alpha$.
 \end{itemize}
\item[{\textbf{(N2)}}] An $n$-$\Sigma$-sequence belongs to $\nang$ if
  and only if its left rotation belongs to $\nang$.
\item[{\textbf{(N3)}}] Each commutative diagram
  \begin{center}
    \begin{tikzpicture}[text centered]
      \node (X1) at (0,1.5){$A_1$};
      \node (X2) at (1.5,1.5){$A_2$};
      \node (X3) at (3,1.5){$A_3$};
      \node (Xdots) at (4.5,1.5){$\cdots$};
      \node (Xn) at (6,1.5){$A_n$};
      \node (SX1) at (7.5,1.5){$\Sigma A_1$};

      \node (Y1) at (0,0){$B_1$};
      \node (Y2) at (1.5,0){$B_2$};
      \node (Y3) at (3,0){$B_3$};
      \node (Ydots) at (4.5,0){$\cdots$};
      \node (Yn) at (6,0){$B_n$};
      \node (SY1) at (7.5,0){$\Sigma B_1$};
      
      \begin{scope}[font=\scriptsize,->,midway]
        \draw (X1) -- node[right]{$\varphi_1$} (Y1);
        \draw (X2) -- node[right]{$\varphi_2$} (Y2);
        \draw[dashed] (X3) -- node[right]{$\varphi_3$} (Y3);
        \draw[dashed] (Xn) -- node[right]{$\varphi_n$} (Yn);
        \draw (SX1) -- node[right]{$\Sigma \varphi_1$} (SY1);
        
        \draw (X1) -- node[above]{$\alpha_1$} (X2);
        \draw (X2) -- node[above]{$\alpha_2$} (X3);
        \draw (X3) -- node[above]{$\alpha_3$} (Xdots);
        \draw (Xdots) -- node[above]{$\alpha_{n - 1}$} (Xn);
        \draw (Xn) -- node[above]{$\alpha_n$} (SX1);
        \draw (Y1) -- node[above]{$\beta_1$} (Y2);
        \draw (Y2) -- node[above]{$\beta_2$} (Y3);
        \draw (Y3) -- node[above]{$\beta_3$} (Ydots);
        \draw (Ydots) -- node[above]{$\beta_{n - 1}$} (Yn);
        \draw (Yn) -- node[above]{$\beta_n$} (SY1);
      \end{scope}
    \end{tikzpicture}
  \end{center}
  with rows in $\nang$ can be completed to a morphism of
  $n$-$\Sigma$-sequences.
\end{itemize}

In this case, the collection $\nang$ is a \emph{pre-$n$-angulation} of
the category $\C$ (relative to the automorphism $\Sigma$), and the
$n$-$\Sigma$-sequences in $\nang$ are \emph{$n$-angles}.  If, in
addition, the collection $\nang$ satisfies the following axiom, then
it is an \emph{$n$-angulation} of $\C$, and the category is
\emph{$n$-angulated}:

\begin{itemize}
\item[{\textbf{(N4)}}] In the situation of (N3), the morphisms
  $\varphi_3,\varphi_4,\ldots,\varphi_n$ can be chosen such that the
  mapping cone
  \begin{equation*}
    A_2 \oplus B_1 \xrightarrow{\left[
        \begin{smallmatrix}
          -\alpha_2 & 0\\
          \hfill \varphi_2 & \beta_1
        \end{smallmatrix}
      \right]} A_3 \oplus B_2 \xrightarrow{\left[
        \begin{smallmatrix}
          -\alpha_3 & 0\\
          \hfill \varphi_3 & \beta_2
        \end{smallmatrix}
      \right]} \cdots \xrightarrow{\left[
        \begin{smallmatrix} 
          -\alpha_n & 0\\
          \hfill \varphi_n & \beta_{n - 1}
        \end{smallmatrix}
      \right]} \Sigma A_1 \oplus B_n \xrightarrow{\left[
        \begin{smallmatrix}
          -\Sigma \alpha_1 & 0\\
          \hfill \Sigma \varphi_1 & \beta_n
        \end{smallmatrix}
      \right]} \Sigma A_2 \oplus \Sigma B_1
  \end{equation*}
  belongs to $\nang$.
\end{itemize}

Note that in \cite{GKO}, it was not explicitly assumed that $\nang$ be
closed under isomorphisms, but it follows implicitly from closure
under direct sums.  Since closure under isomorphisms is a crucial part
of many of our proofs, we have included it as a part of axiom (N1)(a).
Note also that by \cite[Proposition 1.5]{GKO}, every $n$-angle in a
pre-$n$-angulated category is exact.  Consequently, the composition of
two consecutive morphisms in an $n$-angle is zero. Finally, note the
similarity with Balmer's recent definition (cf.\ \cite{Balmer}) of
triangulated categories \emph{of order $n$}.

\section{Axioms (N1) and (N2)}
\label{sec:axiomsN1N2}
In this section, we discuss the first two defining axioms (N1) and
(N2) for pre-$n$-angulated categories.  It turns out that we may
replace these axioms by the following ones:
\begin{itemize}
\item[{\textbf{(N1*)}}]
  \begin{itemize}
  \item[(a)] If $A_\bullet \xrightarrow{\varphi} B_\bullet$ is a weak
    isomorphism of exact $n$-$\Sigma$-sequences with $A_\bullet \in
    \nang$, then $B_\bullet$ belongs to $\nang$.
  \item[(b)] For all $A \in \C$, the trivial $n$-$\Sigma$-sequence
    \begin{equation*}
      A \xrightarrow{1} A \to 0 \to \cdots \to 0 \to \Sigma A
    \end{equation*}
    belongs to $\nang$.
  \item[(c)] For each morphism $\alpha \colon A_1 \to A_2$ in $\C$,
    there exists an $n$-$\Sigma$-sequence in $\nang$ whose first
    morphism is $\alpha$.
  \end{itemize}
\item[\textbf{(N2*)}] The left rotation of every $n$-$\Sigma$-sequence
  in $\nang$ also belongs to $\nang$.
\end{itemize}
In axiom (N1*), we do not require that $\nang$ be closed under direct
sums and summands.  However, we do require that $\nang$ be closed
under weak isomorphisms (in one direction), and this is stronger than
requiring that $\nang$ be closed under isomorphisms.  In axiom (N2*),
we only require that $\nang$ be closed under left rotations.  This is
sometimes done when considering triangulated categories, cf.\
\cite{KV}.

Because of the new axiom (N1*)(a), the exact $n$-$\Sigma$-sequences
play an important role in the proofs to come.  We therefore need to
determine which properties a collection $\nang$ of
$n$-$\Sigma$-sequences must satisfy in order for all its elements to
be exact.  We do this in the following result.

\begin{lemma}
  \label{lem:exactness}
  If $\nang$ is a collection of $n$-$\Sigma$-sequences satisfying the
  axioms \emph{(N1)(b)}, \emph{(N2*)} and \emph{(N3)}, then all the
  elements in $\nang$ are exact.
\end{lemma}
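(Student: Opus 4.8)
The plan is to reduce the statement to a single three-term exactness assertion and then spread it around using rotation. Write $L$ for the left-rotation operation on $n$-$\Sigma$-sequences. I claim it suffices to show that for every $A_\bullet \in \nang$, with morphisms $\alpha_1,\ldots,\alpha_n$, and every object $D \in \C$, the sequence
\[
  \Hom_{\C}(D,A_n) \xrightarrow{(\alpha_n)_*} \Hom_{\C}(D,\Sigma A_1) \xrightarrow{(\Sigma\alpha_1)_*} \Hom_{\C}(D,\Sigma A_2)
\]
is exact at the middle term. Indeed, by (N2*) every left rotation $L^m A_\bullet$ again lies in $\nang$; applying the assertion to these shows that the Hom-sequence of $A_\bullet$ is exact at every term of the form $\Hom_{\C}(D,\Sigma^k A_i)$ with $k \geq 1$. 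Since $\Sigma$ is an automorphism of $\C$, exactness at $\Hom_{\C}(D,\Sigma X)$ for all $D$ is equivalent to exactness at $\Hom_{\C}(D,X)$ for all $D$; hence all terms are covered and $A_\bullet$ is exact.

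For the inclusion $\Image \subseteq \Ker$ at that term it is enough to know that consecutive morphisms in an $n$-angle compose to zero. To get $\alpha_2\alpha_1 = 0$ for $A_\bullet \in \nang$, I would take the trivial $n$-angle $A_1 \xrightarrow{1} A_1 \to 0 \to \cdots \to 0 \to \Sigma A_1$, which belongs to $\nang$ by (N1)(b), and apply (N3) to the partial morphism into $A_\bullet$ whose first two components are $1_{A_1}$ and $\alpha_1$; in the resulting morphism of $n$-$\Sigma$-sequences the commutative square relating the second and third columns forces $\alpha_2\alpha_1$ to factor through the zero object, hence to vanish. Repeating this with the left rotations $L^m A_\bullet$ (again in $\nang$ by (N2*)) shows that all consecutive compositions in the doubly-infinite Hom-sequence vanish; in particular $(\Sigma\alpha_1)\alpha_n = 0$, which is the inclusion we want.

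The heart of the argument is the reverse inclusion $\Ker \subseteq \Image$. Fix $\xi \in \Hom_{\C}(D,\Sigma A_1)$ with $(\Sigma\alpha_1)\xi = 0$; applying $\Sigma^{-1}$, this reads $\alpha_1(\Sigma^{-1}\xi) = 0$, where $\Sigma^{-1}\xi \colon \Sigma^{-1}D \to A_1$. By (N1)(b) and (N2*), the left rotation of the trivial $n$-angle on $\Sigma^{-1}D$, namely
\[
  \Sigma^{-1}D \xrightarrow{0} 0 \xrightarrow{0} \cdots \xrightarrow{0} 0 \xrightarrow{0} D \xrightarrow{(-1)^n 1_D} D,
\]
lies in $\nang$. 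I would apply (N3) to the partial morphism from this $n$-angle to $A_\bullet$ with first component $\Sigma^{-1}\xi$ and second component the zero map $0 \to A_2$; the one square that has to commute does so precisely because $\alpha_1(\Sigma^{-1}\xi) = 0$. The completion furnished by (N3) contains a morphism $\varphi_n \colon D \to A_n$, and commutativity of the last square of the completed diagram says $(-1)^n\xi = \alpha_n\varphi_n$, so $\xi = (-1)^n\alpha_n\varphi_n \in \Image((\alpha_n)_*)$, as required.

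I expect the main obstacle to be organisational rather than conceptual: one has to pick the rotations of the trivial $n$-angles so that the relevant identity morphism lands in the last column, and keep track of the sign $(-1)^n$ it acquires, so that (N3) really does pin down the desired lift as a component of the completed diagram. One must also check that the reduction in the first paragraph genuinely reaches every term of the doubly-infinite Hom-sequence — and it is here that closure under left rotations (N2*) together with $\Sigma$ being an automorphism is used essentially, since neither right rotations nor passage to direct summands is available.
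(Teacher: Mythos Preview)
Your proof is correct and follows essentially the same approach as the paper: both arguments establish the vanishing of consecutive compositions and the kernel--image inclusion by mapping suitable (left-rotated) trivial $n$-angles into (left-rotated) copies of $A_\bullet$ via (N3), reading off the desired relation from commutativity of the appropriate square. The only difference is organisational: the paper treats exactness at an arbitrary term $\Hom_\C(B,\Sigma^i A_j)$ directly by rotating both the trivial angle and $A_\bullet$, whereas you first reduce to the single term $\Hom_\C(D,\Sigma A_1)$ and then propagate via (N2*) and the fact that $\Sigma$ is an automorphism --- but the underlying mechanism is identical.
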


\begin{proof}
  Let 
  \begin{center}
    \begin{tikzpicture}
      \node[left] (Abullet) at (-1.25,0){$A_\bullet\colon$};

      \node (A1) at (0,0){$A_1$};
      \node (A2) at (1.5,0){$A_2$};
      \node (Adots) at (3,0){$\cdots$};
      \node (An) at (4.5,0){$A_n$};
      \node (SA1) at (6,0){$\Sigma A_1$};

      \begin{scope}[font=\scriptsize,->,midway,above]
        \draw (A1) -- node{$\alpha_1$} (A2);
        \draw (A2) -- node{$\alpha_2$} (Adots);
        \draw (Adots) -- node{$\alpha_{n - 1}$} (An);
        \draw (An) -- node{$\alpha_n$} (SA1);
      \end{scope}
    \end{tikzpicture}
  \end{center}
  be an $n$-$\Sigma$-sequence in $\nang$, and pick an integer $1 \leq
  j \leq n$.  In the diagram
  \begin{center}
    \begin{tikzpicture}[text centered]
      \node (X1) at (0,1.5){$A_j$};
      \node (X2) at (1.5,1.5){$A_j$};
      \node (X3) at (3.25,1.5){$0$};
      \node (Xdots) at (4.75,1.5){$\cdots$};
      \node (Xn) at (7.25,1.5){$0$};
      \node (SX1) at (9.75,1.5){$\Sigma A_j$};

      \node (Y1) at (0,0){$A_j$};
      \node (Y2) at (1.5,0){$A_{j+1}$};
      \node (Y3) at (3.25,0){$A_{j+2}$};
      \node (Ydots) at (4.75,0){$\cdots$};
      \node (Yn) at (7.25,0){$\Sigma A_{j-1}$};
      \node (SY1) at (9.75,0){$\Sigma A_j$};
      
      \begin{scope}[font=\scriptsize,->,midway]
        \draw[-,double equal sign distance] (X1) -- (Y1);
        \draw (X2) -- node[right]{$\alpha_j$} (Y2);
        \draw[dashed] (X3) -- (Y3);
        \draw[dashed] (Xn) -- (Yn);
        \draw[-,double equal sign distance] (SX1) -- (SY1);
        
        \draw (X1) -- node[above]{$1$}(X2);
        \draw (X2) -- (X3);
        \draw (X3) -- (Xdots);
        \draw (Xdots) -- (Xn);
        \draw (Xn) -- (SX1);
        \draw (Y1) -- node[above]{$\alpha_j$} (Y2);
        \draw (Y2) -- node[above]{$\alpha_{j + 1}$} (Y3);
        \draw (Y3) -- node[above]{$\alpha_{j + 2}$} (Ydots);
        \draw (Ydots) -- node[above]{$(-1)^n \Sigma \alpha_{j - 2}$}
          (Yn);
        \draw (Yn) -- node[above]{$(-1)^n \Sigma \alpha_{j - 1}$}
          (SY1);
      \end{scope}
    \end{tikzpicture}
  \end{center}
  the two rows both belong to $\nang$: the top row by (N1)(b), and the
  bottom row by (repeated use of) (N2*).  If $j = 1$ or $j = 2$, the
  two rightmost morphisms in the bottom row have different labels, and
  if $j = n$ then $\alpha_{n+1} = (-1)^n \Sigma \alpha_1$.  By (N3),
  we can complete the diagram to a morphism of $n$-$\Sigma$-sequences,
  hence
  \begin{equation*}
    \alpha_2 \circ \alpha_1 = \alpha_3 \circ \alpha_2 = \cdots =
    \alpha_n \circ \alpha_{n-1} = ( \Sigma \alpha_1 ) \circ \alpha_n =
    0.
  \end{equation*}

  For objects $X,Y \in \C$, denote the abelian group $\Hom_{\C}(X,Y)$
  by $(X,Y)$.  Since all the possible compositions of morphisms from
  $A_\bullet$ are zero, the doubly infinite sequence
  \begin{equation*}
    \cdots \to (B, \Sigma^{i-1} A_n) \xrightarrow{(\Sigma^{i-1}
      \alpha_n)_*} (B, \Sigma^{i} A_1) \xrightarrow{(\Sigma^{i}
      \alpha_1)_*} \cdots \xrightarrow{(\Sigma^{i} \alpha_{n-1})_*}
    (B, \Sigma^i A_n) \xrightarrow{(\Sigma^{i} \alpha_{n})_*} (B,
    \Sigma^{i+1} A_1) \to \cdots
  \end{equation*}
  of abelian groups and maps is a complex for every object $B \in \C$.
  Now pick an integer $1 \leq i \leq n$, and let $f$ be an element in
  $\Ker(\Sigma^i \alpha_j)_*$.  Then $f$ is a morphism in
  $\Hom_{\C}(B, \Sigma^i A_j)$ with $(\Sigma^i \alpha_j)\circ f = 0$.
  Applying the automorphism $\Sigma^{-i}$, we obtain $\alpha_j \circ
  (\Sigma^{-i} f) = 0$, where $\Sigma^{-i} f$ is a morphism in
  $\Hom_{\C}(\Sigma^{-i}B, A_j)$.  Now consider the diagram
  \begin{center}
    \begin{tikzpicture}[text centered]
      \node (X1) at (0,1.5){$\Sigma^{-i}B$};
      \node (X2) at (1.5,1.5){$0$};
      \node (Xdots) at (3,1.5){$\cdots$};
      \node (Xn) at (5.5,1.5){$\Sigma^{1-i}B$};
      \node (SX1) at (8,1.5){$\Sigma^{1-i}B$};

      \node (Y1) at (0,0){$A_j$};
      \node (Y2) at (1.5,0){$A_{j+1}$};
      \node (Ydots) at (3,0){$\cdots$};
      \node (Yn) at (5.5,0){$\Sigma A_{j-1}$};
      \node (SY1) at (8,0){$\Sigma A_j$};
      
      \begin{scope}[font=\scriptsize,->,midway]
        \draw (X1) -- node[right]{$\Sigma^{-i} f$} (Y1);
        \draw (X2) -- (Y2);
        \draw[dashed] (Xn) -- node[right]{$g$} (Yn);
        \draw (SX1) -- node[right]{$\Sigma^{1-i} f$} (SY1);
        
        \draw (X1) -- (X2);
        \draw (X2) -- (Xdots);
        \draw (Xdots) -- (Xn);
        \draw (Xn) -- node[above]{$(-1)^n$} (SX1);
        \draw (Y1) -- node[above]{$\alpha_j$} (Y2);
        \draw (Y2) -- node[above]{$\alpha_{j + 1}$} (Ydots);
        \draw (Ydots) -- node[above]{$(-1)^n \Sigma \alpha_{j -
            2}$}(Yn);
        \draw (Yn) -- node[above]{$(-1)^n \Sigma \alpha_{j - 1}$}
          (SY1);
      \end{scope}
    \end{tikzpicture}
  \end{center}
  in which the two rows belong to $\nang$ by (N1)(b) and (repeated use
  of) (N2*).  By (N3), we can complete this diagram to a morphism of
  $n$-$\Sigma$-sequences, and in particular we obtain a morphism $g
  \in \Hom_{\C}( \Sigma^{1-i} B, \Sigma A_{j-1} )$ with
  \begin{equation*}
    (\Sigma \alpha_{j-1}) \circ g = \Sigma^{1-i} f.
  \end{equation*}
  Applying the automorphism $\Sigma^{i-1}$ gives
  \begin{equation*}
    f = (\Sigma^i \alpha_{j-1}) \circ (\Sigma^{i-1} g),
  \end{equation*}
  hence $f \in \Image(\Sigma^i \alpha_{j-1})_*$.  This shows that the
  complex is exact, and so $A_\bullet$ is an exact
  $n$-$\Sigma$-sequence.
\end{proof}

We may now prove that axiom (N1) can be replaced with axiom (N1*).

\begin{theorem}
  \label{tim:newN1}
  If $\nang$ is a collection of $n$-$\Sigma$-sequences satisfying the
  axioms \emph{(N2)} and \emph{(N3)}, then the following are
  equivalent:
  \begin{itemize}
  \item[(1)] $\nang$ satisfies \emph{(N1)},
  \item[(2)] $\nang$ satisfies \emph{(N1*)}.
  \end{itemize}
\end{theorem}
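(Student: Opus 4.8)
The statement reduces at once to the equivalence of \emph{(N1)(a)} with \emph{(N1*)(a)}: parts (b) and (c) of the two axioms are identical, and in the presence of (N2) and (N3) the hypotheses of Lemma~\ref{lem:exactness} hold under either (N1) or (N1*) --- since (N2) implies (N2*) and (N1)(b)$=$(N1*)(b) --- so in both situations every $n$-$\Sigma$-sequence in $\nang$ is exact. I use this freely below.

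\emph{Proof of (2)$\,\Rightarrow\,$(1).} Assume (N1*), (N2), (N3); one must check that $\nang$ is closed under isomorphisms, finite direct sums and direct summands. An isomorphism is in particular a weak isomorphism, and if its source lies in $\nang$ both sequences are exact (the target because it is isomorphic to the source), so (N1*)(a) handles isomorphisms. For $A_\bullet,A_\bullet'\in\nang$, use (N1*)(c) to choose $D_\bullet\in\nang$ with first morphism $\alpha_1\oplus\alpha_1'$, and apply (N3) twice, once for each canonical projection $A_\bullet\oplus A_\bullet'\to A_\bullet$ and $A_\bullet\oplus A_\bullet'\to A_\bullet'$ (the squares on the first two terms commute because the projections respect first morphisms); this produces a morphism $D_\bullet\to A_\bullet\oplus A_\bullet'$ which is the identity on the first two terms, hence a weak isomorphism, and as $A_\bullet\oplus A_\bullet'$ is exact, (N1*)(a) puts it in $\nang$. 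If $A_\bullet=B_\bullet\oplus B_\bullet'\in\nang$ with canonical $\iota\colon B_\bullet\hookrightarrow A_\bullet$, $\pi\colon A_\bullet\twoheadrightarrow B_\bullet$, use (N1*)(c) to choose $D_\bullet\in\nang$ with first morphism that of $B_\bullet$, complete the commutative square formed by $\iota_1,\iota_2$ and the first morphisms of $D_\bullet$ and $A_\bullet$ to a morphism $\Phi\colon D_\bullet\to A_\bullet$ by (N3), and note that $\pi\circ\Phi\colon D_\bullet\to B_\bullet$ is the identity on the first two terms; since $B_\bullet$ is exact (a summand of the exact $A_\bullet$), (N1*)(a) gives $B_\bullet\in\nang$. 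The point to watch is that (N3) is only ever applied to diagrams whose rows already lie in $\nang$, which is why for summands the comparison morphism is routed through $A_\bullet$.

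\emph{Proof of (1)$\,\Rightarrow\,$(2).} Assume (N1), (N2), (N3); one must prove (N1*)(a). Let $\varphi\colon A_\bullet\to B_\bullet$ be a weak isomorphism of exact $n$-$\Sigma$-sequences with $\varphi_i,\varphi_{i+1}$ isomorphisms and $A_\bullet\in\nang$. Using (N2) I rotate to the case $i=1$ (a left rotation of a weak isomorphism is a weak isomorphism, and rotations preserve both $\nang$ and exactness); then, replacing $A_\bullet$ by the isomorphic $n$-$\Sigma$-sequence obtained by transporting its first two terms along $\varphi_1,\varphi_2$ (allowed by (N1)(a), and making the induced morphism to $B_\bullet$ the identity on the first two terms), I reduce to the situation $A_1=B_1$, $A_2=B_2$, $\alpha_1=\beta_1$, $\varphi_1=\varphi_2=1$. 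It then suffices to show that an exact $n$-$\Sigma$-sequence $B_\bullet$ admitting a morphism from some $A_\bullet\in\nang$ which is the identity on the first two terms must itself lie in $\nang$. For this I would exhibit an isomorphic copy of $B_\bullet$ as a direct summand of an $n$-$\Sigma$-sequence assembled from $A_\bullet$ and trivial $n$-$\Sigma$-sequences by direct sums and rotations --- hence visibly in $\nang$ by (N1)(b), (N2) and (N1)(a) --- and then invoke closure under direct summands, which is part of (N1)(a). The ambient sequence is built by an explicit recipe combining mapping cones with (N1)(c) and (N3); the exactness of $B_\bullet$ is used essentially at this step, as it must be, for without it $B_\bullet$ could fail to be an $n$-angle already for homological reasons.

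The heart of the matter --- and the main obstacle --- is this last construction in the direction (1)$\,\Rightarrow\,$(2). When $n=3$ there is nothing to do: a weak isomorphism of exact triangles is automatically an isomorphism, by the five lemma applied to the long exact $\Hom(X,-)$-sequences, so $B_\bullet\cong A_\bullet$ and closure under isomorphisms suffices. For $n\ge 4$ the components $\varphi_3,\dots,\varphi_n$ need not be isomorphisms and $B_\bullet$ is genuinely distinct from $A_\bullet$, so it must be manufactured by hand; moreover the obvious candidate for the ambient sequence, the mapping cone of $\varphi$, is only a degreewise-split, \emph{non}-split extension of a rotation of $A_\bullet$ by $B_\bullet$, so $B_\bullet$ is not literally a summand of it and the gadget has to be refined. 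Carrying this out while confining every use of (N3) to rows already in $\nang$ is where the real work lies.
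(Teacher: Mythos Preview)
Your argument for $(2)\Rightarrow(1)$ is correct and is essentially the paper's proof: closure under isomorphisms via (N1*)(a), closure under direct sums by completing the first morphism via (N1*)(c) and projecting via (N3) to obtain a weak isomorphism, and closure under summands by routing through the ambient $n$-angle $A_\bullet$ so that (N3) is only applied to rows in $\nang$. The care you take on this last point is exactly right.

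The direction $(1)\Rightarrow(2)$, however, is not proved. You reduce correctly to the case $\varphi_1=\varphi_2=1$ and then announce a plan --- exhibit $B_\bullet$ as a summand of something built from $A_\bullet$ and trivial $n$-angles --- but you do not carry it out, and you say so yourself (``Carrying this out \dots\ is where the real work lies''). That is a genuine gap: for $n\ge 4$ a weak isomorphism of exact $n$-$\Sigma$-sequences need not be an isomorphism, as you note, so the five-lemma shortcut fails and some explicit construction is required. Your diagnosis of the obstacle (the mapping cone is only a non-split extension, not a direct sum) is accurate, but diagnosis is not proof.

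For comparison: the paper does not prove this direction either. It simply invokes \cite[Lemma~1.4]{GKO}, where the required statement --- that a pre-$n$-angulated category is closed under weak isomorphisms of exact $n$-$\Sigma$-sequences --- is established. So in the context of this paper a citation would suffice; if you want a self-contained argument you will need to supply the stabilisation-by-trivials construction in full, checking at each stage that the modified morphism remains a morphism of $n$-$\Sigma$-sequences.
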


\begin{proof}
  The implication (1) $\Rightarrow$ (2) is part of \cite[Lemma
  1.4]{GKO}, hence we must prove that (1) follows from (2), i.e.\ that
  $\nang$ satisfies (N1)(a) whenever it satisfies (N1*).  Suppose
  therefore that $\nang$ satisfies (N1*).

  Since the collection $\nang$ satisfies the axioms (N1*)(b), (N2) and
  (N3), the $n$-$\Sigma$-sequences in $\nang$ are exact by Lemma
  \ref{lem:exactness}.  Now let $A_{\bullet}$ and $B_{\bullet}$ be
  isomorphic $n$-$\Sigma$-sequences, with $A_{\bullet}$ in $\nang$.
  Then $A_{\bullet}$ is exact, and so $B_{\bullet}$ must also be exact
  since it is isomorphic to $A_{\bullet}$.  Since $A_{\bullet}$ and
  $B_{\bullet}$ are trivially weakly isomorphic through an isomorphism
  $A_{\bullet} \to B_{\bullet}$, the $n$-$\Sigma$-sequence
  $B_{\bullet}$ also belongs to $\nang$.  This shows that $\nang$ is
  closed under isomorphisms.

  Next, we show that $\nang$ is closed under direct sums.  Given two
  $n$-$\Sigma$-sequences
  \begin{center}
    \begin{tikzpicture}
      \node[left] (Abullet) at (-1.25,0.6){$A_\bullet\colon$};
      \node[left] (Bbullet) at (-1.25,0){$B_\bullet\colon$};

      \node (A1) at (0,0.6){$A_1$};
      \node (A2) at (1.5,0.6){$A_2$};
      \node (Adots) at (3,0.6){$\cdots$};
      \node (An) at (4.5,0.6){$A_n$};
      \node (SA1) at (6,0.6){$\Sigma A_1$};

      \node (B1) at (0,0){$B_1$};
      \node (B2) at (1.5,0){$B_2$};
      \node (Bdots) at (3,0){$\cdots$};
      \node (Bn) at (4.5,0){$B_n$};
      \node (SB1) at (6,0){$\Sigma B_1$};

      \begin{scope}[font=\scriptsize,->,midway,above]
        \draw (A1) -- node{$\alpha_1$} (A2);
        \draw (A2) -- node{$\alpha_2$} (Adots);
        \draw (Adots) -- node{$\alpha_{n - 1}$} (An);
        \draw (An) -- node{$\alpha_n$} (SA1);
        
        \draw (B1) -- node{$\beta_1$} (B2);
        \draw (B2) -- node{$\beta_2$} (Bdots);
        \draw (Bdots) -- node{$\beta_{n - 1}$} (Bn);
        \draw (Bn) -- node{$\beta_n$} (SB1);
      \end{scope}
    \end{tikzpicture}
  \end{center}
  in $\nang$, the direct sum $A_\bullet \oplus B_\bullet$ is exact,
  since each of the sequences is exact by the above.  Now use (N1*)(c)
  to complete the first morphism in $A_\bullet \oplus B_\bullet$ to an
  $n$-$\Sigma$-sequence
  \begin{equation*}
    A_1 \oplus B_1 \xrightarrow{\left[
        \begin{smallmatrix}
          \alpha_1 & 0\\
          0 & \beta_1
        \end{smallmatrix}
      \right]} A_2 \oplus B_2 \xrightarrow{\gamma_2} C_3
    \xrightarrow{\gamma_3} \cdots \xrightarrow{\gamma_{n-1}} C_n
    \xrightarrow{\gamma_n} \Sigma A_1 \oplus \Sigma B_1
  \end{equation*}
  in $\nang$.  By (N3), the two commutative diagrams
  \begin{center}
    \begin{tikzpicture}[text centered]
      \node (X1) at (0,1.5){$A_1 \oplus B_1$};
      \node (X2) at (2,1.5){$A_2 \oplus B_2$};
      \node (X3) at (4,1.5){$C_3$};
      \node (Xdots) at (5.5,1.5){$\cdots$};
      \node (Xn) at (7,1.5){$C_n$};
      \node (SX1) at (9,1.5){$\Sigma A_1 \oplus \Sigma B_1$};

      \node (Y1) at (0,0){$A_1$};
      \node (Y2) at (2,0){$A_2$};
      \node (Y3) at (4,0){$A_3$};
      \node (Ydots) at (5.5,0){$\cdots$};
      \node (Yn) at (7,0){$A_n$};
      \node (SY1) at (9,0){$\Sigma A_1$};
      
      \begin{scope}[font=\scriptsize,->,midway]
        \draw (X1) -- node[right]{$\left[
            \begin{smallmatrix}
              1 & 0
            \end{smallmatrix}
          \right]$} (Y1);
        \draw (X2) -- node[right]{$\left[
            \begin{smallmatrix}
              1 & 0
            \end{smallmatrix}
          \right]$} (Y2);
        \draw[dashed] (X3) -- node[right]{$\varphi_3$} (Y3);
        \draw[dashed] (Xn) -- node[right]{$\varphi_n$} (Yn);
        \draw (SX1) -- node[right]{$\left[
            \begin{smallmatrix}
              1 & 0
            \end{smallmatrix}
          \right]$} (SY1);
        
        \draw (X1) -- node[above]{$\left[
            \begin{smallmatrix}
              \alpha_1 & 0\\
              0 & \beta_1
            \end{smallmatrix}
          \right]$} (X2);
        \draw (X2) -- node[above]{$\gamma_2$} (X3);
        \draw (X3) -- node[above]{$\gamma_3$} (Xdots);
        \draw (Xdots) -- node[above]{$\gamma_{n - 1}$} (Xn);
        \draw (Xn) -- node[above]{$\gamma_n$} (SX1);
        \draw (Y1) -- node[above]{$\alpha_1$} (Y2);
        \draw (Y2) -- node[above]{$\alpha_2$} (Y3);
        \draw (Y3) -- node[above]{$\alpha_3$} (Ydots);
        \draw (Ydots) -- node[above]{$\alpha_{n - 1}$} (Yn);
        \draw (Yn) -- node[above]{$\alpha_n$} (SY1);
      \end{scope}
    \end{tikzpicture}
  \end{center}
  
  \begin{center}
    \begin{tikzpicture}[text centered]
      \node (X1) at (0,1.5){$A_1 \oplus B_1$};
      \node (X2) at (2,1.5){$A_2 \oplus B_2$};
      \node (X3) at (4,1.5){$C_3$};
      \node (Xdots) at (5.5,1.5){$\cdots$};
      \node (Xn) at (7,1.5){$C_n$};
      \node (SX1) at (9,1.5){$\Sigma A_1 \oplus \Sigma B_1$};

      \node (Y1) at (0,0){$B_1$};
      \node (Y2) at (2,0){$B_2$};
      \node (Y3) at (4,0){$B_3$};
      \node (Ydots) at (5.5,0){$\cdots$};
      \node (Yn) at (7,0){$B_n$};
      \node (SY1) at (9,0){$\Sigma B_1$};
      
      \begin{scope}[font=\scriptsize,->,midway]
        \draw (X1) -- node[right]{$\left[
            \begin{smallmatrix}
              0 & 1
            \end{smallmatrix}
          \right]$} (Y1);
        \draw (X2) -- node[right]{$\left[
            \begin{smallmatrix}
              0 & 1
            \end{smallmatrix}
          \right]$} (Y2);
        \draw[dashed] (X3) -- node[right]{$\psi_3$} (Y3);
        \draw[dashed] (Xn) -- node[right]{$\psi_n$} (Yn);
        \draw (SX1) -- node[right]{$\left[
            \begin{smallmatrix}
              0 & 1
            \end{smallmatrix}
          \right]$} (SY1);
        
        \draw (X1) -- node[above]{$\left[
            \begin{smallmatrix}
              \alpha_1 & 0\\
              0 & \beta_1
            \end{smallmatrix}
          \right]$} (X2);
        \draw (X2) -- node[above]{$\gamma_2$} (X3);
        \draw (X3) -- node[above]{$\gamma_3$} (Xdots);
        \draw (Xdots) -- node[above]{$\gamma_{n - 1}$} (Xn);
        \draw (Xn) -- node[above]{$\gamma_n$} (SX1);
        \draw (Y1) -- node[above]{$\beta_1$} (Y2);
        \draw (Y2) -- node[above]{$\beta_2$} (Y3);
        \draw (Y3) -- node[above]{$\beta_3$} (Ydots);
        \draw (Ydots) -- node[above]{$\beta_{n - 1}$} (Yn);
        \draw (Yn) -- node[above]{$\beta_n$} (SY1);
      \end{scope}
    \end{tikzpicture}
  \end{center}
  can be completed to morphisms of $n$-$\Sigma$-sequences, since the
  sequences involved are all in $\nang$.  This gives a weak
  isomorphism
  \begin{center}
    \begin{tikzpicture}[text centered]
      \node (X1) at (0,1.5){$A_1 \oplus B_1$};
      \node (X2) at (2.25,1.5){$A_2 \oplus B_2$};
      \node (X3) at (4.5,1.5){$C_3$};
      \node (Xdots) at (6.5,1.5){$\cdots$};
      \node (Xn) at (9,1.5){$C_n$};
      \node (SX1) at (11.5,1.5){$\Sigma A_1 \oplus \Sigma B_1$};

      \node (Y1) at (0,0){$A_1 \oplus B_1$};
      \node (Y2) at (2.25,0){$A_2 \oplus B_2$};
      \node (Y3) at (4.5,0){$A_3 \oplus B_3$};
      \node (Ydots) at (6.5,0){$\cdots$};
      \node (Yn) at (9,0){$A_n \oplus B_n$};
      \node (SY1) at (11.5,0){$\Sigma A_1 \oplus \Sigma B_1$};
      
      \begin{scope}[font=\scriptsize,->,midway]
        \draw[-,double equal sign distance] (X1) -- (Y1);
        \draw[-,double equal sign distance] (X2) -- (Y2);
        \draw[dashed] (X3) -- node[right]{$\left[
            \begin{smallmatrix}
              \varphi_3\\
              \psi_3
            \end{smallmatrix}
          \right]$} (Y3);
        \draw[dashed] (Xn) -- node[right]{$\left[
            \begin{smallmatrix}
              \varphi_n \\
              \psi_n
            \end{smallmatrix}
          \right]$} (Yn);
        \draw[-,double equal sign distance] (SX1) -- (SY1);
        
        \draw (X1) -- node[above]{$\left[
            \begin{smallmatrix}
              \alpha_1 & 0\\
              0 & \beta_1
            \end{smallmatrix}
          \right]$} (X2);
        \draw (X2) -- node[above]{$\gamma_2$} (X3);
        \draw (X3) -- node[above]{$\gamma_3$} (Xdots);
        \draw (Xdots) -- node[above]{$\gamma_{n - 1}$} (Xn);
        \draw (Xn) -- node[above]{$\gamma_n$} (SX1);
        \draw (Y1) -- node[above]{$\left[
            \begin{smallmatrix}
              \alpha_1 & 0\\
              0 & \beta_1
            \end{smallmatrix}
          \right]$} (Y2);
        \draw (Y2) -- node[above]{$\left[
            \begin{smallmatrix}
              \alpha_2 & 0\\
              0 & \beta_2
            \end{smallmatrix}
          \right]$} (Y3);
        \draw (Y3) -- node[above]{$\left[
            \begin{smallmatrix}
              \alpha_3 & 0\\
              0 & \beta_3
            \end{smallmatrix}
          \right]$} (Ydots);
        \draw (Ydots) -- node[above]{$\left[
            \begin{smallmatrix}
              \alpha_{n-1} & 0\\
              0 & \beta_{n-1}
            \end{smallmatrix}
          \right]$} (Yn);
        \draw (Yn) -- node[above]{$\left[
            \begin{smallmatrix}
              \alpha_n & 0\\
              0 & \beta_n
            \end{smallmatrix}
          \right]$} (SY1);
      \end{scope}
    \end{tikzpicture}
  \end{center}
  of $n$-$\Sigma$-sequences.  The top sequence belongs to $\nang$ and
  is therefore exact, whereas the bottom sequence $A_\bullet \oplus
  B_\bullet$ is also exact.  From (N1*)(a) we conclude that $A_\bullet
  \oplus B_\bullet$ belongs to $\nang$.

  Finally, we show that $\nang$ is closed under direct summands.
  Suppose therefore that $A_\bullet$ and $B_\bullet$ are
  $n$-$\Sigma$-sequences as above, that $B_\bullet$ belongs to $\nang$
  (hence $B_\bullet$ is exact), and that $A_\bullet$ is a direct
  summand of $B_\bullet$.  Then there exists a diagram
  \begin{center}
    \begin{tikzpicture}[text centered]
      \node (X1) at (0,1.5){$A_1$};
      \node (X2) at (1.5,1.5){$A_2$};
      \node (X3) at (3,1.5){$A_3$};
      \node (Xdots) at (4.5,1.5){$\cdots$};
      \node (Xn) at (6,1.5){$A_n$};
      \node (SX1) at (7.5,1.5){$\Sigma A_1$};

      \node (Y1) at (0,0){$B_1$};
      \node (Y2) at (1.5,0){$B_2$};
      \node (Y3) at (3,0){$B_3$};
      \node (Ydots) at (4.5,0){$\cdots$};
      \node (Yn) at (6,0){$B_n$};
      \node (SY1) at (7.5,0){$\Sigma B_1$};
      
      \node (Z1) at (0,-1.5){$A_1$};
      \node (Z2) at (1.5,-1.5){$A_2$};
      \node (Z3) at (3,-1.5){$A_3$};
      \node (Zdots) at (4.5,-1.5){$\cdots$};
      \node (Zn) at (6,-1.5){$A_n$};
      \node (SZ1) at (7.5,-1.5){$\Sigma A_1$};

      \begin{scope}[font=\scriptsize,->,midway]
        \draw (X1) -- node[right]{$\varphi_1$} (Y1);
        \draw (X2) -- node[right]{$\varphi_2$} (Y2);
        \draw (X3) -- node[right]{$\varphi_3$} (Y3);
        \draw (Xn) -- node[right]{$\varphi_n$} (Yn);
        \draw (SX1) -- node[right]{$\Sigma \varphi_1$} (SY1);
        \draw (Y1) -- node[right]{$\psi_1$} (Z1);
        \draw (Y2) -- node[right]{$\psi_2$} (Z2);
        \draw (Y3) -- node[right]{$\psi_3$} (Z3);
        \draw (Yn) -- node[right]{$\psi_n$} (Zn);
        \draw (SY1) -- node[right]{$\Sigma \psi_1$} (SZ1);

        \draw (X1) -- node[above]{$\alpha_1$} (X2);
        \draw (X2) -- node[above]{$\alpha_2$} (X3);
        \draw (X3) -- node[above]{$\alpha_3$} (Xdots);
        \draw (Xdots) -- node[above]{$\alpha_{n - 1}$} (Xn);
        \draw (Xn) -- node[above]{$\alpha_n$} (SX1);
        \draw (Y1) -- node[above]{$\beta_1$} (Y2);
        \draw (Y2) -- node[above]{$\beta_2$} (Y3);
        \draw (Y3) -- node[above]{$\beta_3$} (Ydots);
        \draw (Ydots) -- node[above]{$\beta_{n - 1}$} (Yn);
        \draw (Yn) -- node[above]{$\beta_n$} (SY1);
        \draw (Z1) -- node[above]{$\alpha_1$} (Z2);
        \draw (Z2) -- node[above]{$\alpha_2$} (Z3);
        \draw (Z3) -- node[above]{$\alpha_3$} (Zdots);
        \draw (Zdots) -- node[above]{$\alpha_{n - 1}$} (Zn);
        \draw (Zn) -- node[above]{$\alpha_n$} (SZ1);
      \end{scope}
    \end{tikzpicture}
  \end{center}
  of morphisms $A_{\bullet} \xrightarrow{\varphi} B_{\bullet}$ and
  $B_{\bullet} \xrightarrow{\psi} A_{\bullet}$ of
  $n$-$\Sigma$-sequences, with $\psi_i \circ \varphi_i = 1_{A_i}$ for
  all $i$.  For every object $Z$ in $\C$, the sequence $\Hom_{\C}(Z,
  A_\bullet )$ of abelian groups and maps is a direct summand of the
  exact sequence $\Hom_{\C}(Z, B_\bullet )$, and is therefore itself
  exact.  Consequently, the $n$-$\Sigma$-sequence $A_\bullet$ is
  exact.  Now use (N1*)(c) to complete the first morphism in
  $A_\bullet$ to an $n$-$\Sigma$-sequence
  \begin{center}
    \begin{tikzpicture}
      \node[left] (Dbullet) at (-1.25,0.6){$D_\bullet\colon$};
      
      \node (A1) at (0,0.6){$A_1$};
      \node (A2) at (1.5,0.6){$A_2$};
      \node (D3) at (3,0.6){$D_3$};
      \node (Ddots) at (4.5,0.6){$\cdots$};
      \node (Dn) at (6,0.6){$D_n$};
      \node (SA1) at (7.5,0.6){$\Sigma A_1$};

      \begin{scope}[font=\scriptsize,->,midway,above]
        \draw (A1) -- node{$\alpha_1$} (A2);
        \draw (A2) -- node{$\delta_2$} (D3);
        \draw (D3) -- node{$\delta_3$} (Ddots);
        \draw (Ddots) -- node{$\delta_{n - 1}$} (Dn);
        \draw (Dn) -- node{$\delta_n$} (SA1);
      \end{scope}
    \end{tikzpicture}
  \end{center}
  in $\nang$ (in particular, $D_\bullet$ is exact).  Using this
  sequence, we can obtain a diagram
  \begin{center}
    \begin{tikzpicture}[text centered]
      \node (X1) at (0,1.5){$A_1$};
      \node (X2) at (1.5,1.5){$A_2$};
      \node (X3) at (3,1.5){$D_3$};
      \node (Xdots) at (4.5,1.5){$\cdots$};
      \node (Xn) at (6,1.5){$D_n$};
      \node (SX1) at (7.5,1.5){$\Sigma A_1$};
      
      \node (Y1) at (0,0){$B_1$};
      \node (Y2) at (1.5,0){$B_2$};
      \node (Y3) at (3,0){$B_3$};
      \node (Ydots) at (4.5,0){$\cdots$};
      \node (Yn) at (6,0){$B_n$};
      \node (SY1) at (7.5,0){$\Sigma B_1$};
      
      \node (Z1) at (0,-1.5){$A_1$};
      \node (Z2) at (1.5,-1.5){$A_2$};
      \node (Z3) at (3,-1.5){$A_3$};
      \node (Zdots) at (4.5,-1.5){$\cdots$};
      \node (Zn) at (6,-1.5){$A_n$};
      \node (SZ1) at (7.5,-1.5){$\Sigma A_1$};
    
      \begin{scope}[font=\scriptsize,->,midway]
        \draw (X1) -- node[right]{$\varphi_1$} (Y1);
        \draw (X2) -- node[right]{$\varphi_2$} (Y2);
        \draw[dashed] (X3) -- node[right]{$\theta_3$} (Y3);
        \draw[dashed] (Xn) -- node[right]{$\theta_n$} (Yn);
        \draw (SX1) -- node[right]{$\Sigma \varphi_1$} (SY1);
        \draw (Y1) -- node[right]{$\psi_1$} (Z1);
        \draw (Y2) -- node[right]{$\psi_2$} (Z2);
        \draw (Y3) -- node[right]{$\psi_3$} (Z3);
        \draw (Yn) -- node[right]{$\psi_n$} (Zn);
        \draw (SY1) -- node[right]{$\Sigma \psi_1$} (SZ1);
      
        \draw (X1) -- node[above]{$\alpha_1$} (X2);
        \draw (X2) -- node[above]{$\delta_2$} (X3);
        \draw (X3) -- node[above]{$\delta_3$} (Xdots);
        \draw (Xdots) -- node[above]{$\delta_{n - 1}$} (Xn);
        \draw (Xn) -- node[above]{$\delta_n$} (SX1);
        \draw (Y1) -- node[above]{$\beta_1$} (Y2);
        \draw (Y2) -- node[above]{$\beta_2$} (Y3);
        \draw (Y3) -- node[above]{$\beta_3$} (Ydots);
        \draw (Ydots) -- node[above]{$\beta_{n - 1}$} (Yn);
        \draw (Yn) -- node[above]{$\beta_n$} (SY1);
        \draw (Z1) -- node[above]{$\alpha_1$} (Z2);
        \draw (Z2) -- node[above]{$\alpha_2$} (Z3);
        \draw (Z3) -- node[above]{$\alpha_3$} (Zdots);
        \draw (Zdots) -- node[above]{$\alpha_{n - 1}$} (Zn);
        \draw (Zn) -- node[above]{$\alpha_n$} (SZ1);
      \end{scope}
    \end{tikzpicture}
  \end{center}
  whose rows are $D_\bullet$, $B_\bullet$ and $A_\bullet$.  The top
  half of this diagram is a morphism $D_{\bullet} \xrightarrow{\theta}
  B_{\bullet}$, which we obtain from (N3), whereas the lower half is
  the morphism $B_{\bullet} \xrightarrow{\psi} A_{\bullet}$.
  Moreover, the composition $D_{\bullet} \xrightarrow{\psi \circ
    \theta} A_{\bullet}$ is a weak isomorphism, since $\psi_1 \circ
  \varphi_1 = 1_{A_1}$ and $\psi_2 \circ \varphi_2 = 1_{A_2}$.  Since
  both $D_{\bullet}$ and $A_{\bullet}$ are exact, and $D_{\bullet} \in
  \nang$, the sequence $A_{\bullet}$ belongs to $\nang$ by (N1*)(a).
  This shows that the collection $\nang$ is closed under direct
  summands.  We have now proved that $\nang$ is closed under
  isomorphisms, direct sums and direct summands, which is axiom
  (N1)(a).
\end{proof}

Next, we study the rotation axiom (N2).  The following result shows
that when we replace (N1) with (N1*), then we can also replace (N2)
with the weaker version with (N2*).  In other words, in the rotation
axiom we only need to require that the left rotation of an
$n$-$\Sigma$ sequence in $\nang$ also belongs to $\nang$.

\begin{theorem}
  \label{thm:rightrotation}
  If $\nang$ is a collection of $n$-$\Sigma$-sequences satisfying the
  axioms \emph{(N1*)} and \emph{(N3)}, then the following are
  equivalent:
  \begin{itemize}
  \item[(1)] $\nang$ satisfies \emph{(N2)},
  \item[(2)] $\nang$ satisfies \emph{(N2*)}.
  \end{itemize}
\end{theorem}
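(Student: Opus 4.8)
The implication (1)~$\Rightarrow$~(2) is immediate, since \emph{(N2)} obviously implies \emph{(N2*)}. For (2)~$\Rightarrow$~(1), write $L(A_\bullet)$ and $R(A_\bullet)$ for the left and right rotations of an $n$-$\Sigma$-sequence $A_\bullet$; these are mutually inverse operations on $n$-$\Sigma$-sequences, and each carries a morphism of $n$-$\Sigma$-sequences to a morphism. As \emph{(N2*)} is the ``$\Rightarrow$'' half of \emph{(N2)}, the only thing to prove is the other half: if $L(A_\bullet) \in \nang$, then $A_\bullet \in \nang$. I would first record two facts: every element of $\nang$ is exact, by Lemma~\ref{lem:exactness} (which needs only \emph{(N1)(b)}, \emph{(N2*)} and \emph{(N3)}); and an $n$-$\Sigma$-sequence is exact if and only if its left rotation is, the two induced doubly infinite $\Hom$-complexes being the same up to a shift and signs. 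Hence $A_\bullet$ is exact whenever $L(A_\bullet) \in \nang$.

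Now suppose $L(A_\bullet) \in \nang$, and use \emph{(N1*)(c)} to complete $\alpha_1 \colon A_1 \to A_2$ to an $n$-$\Sigma$-sequence $C_\bullet = (A_1 \xrightarrow{\alpha_1} A_2 \xrightarrow{\gamma_2} C_3 \to \cdots \to C_n \xrightarrow{\gamma_n} \Sigma A_1)$ in $\nang$ (hence exact). The idea is to compare neither $C_\bullet$ with $A_\bullet$ directly (which we cannot, since $A_\bullet$ is not yet known to be in $\nang$) nor $L(C_\bullet)$ with $L(A_\bullet)$ (which have different second terms $C_3$ and $A_3$), but rather the $n$-fold left rotations. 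On one hand $L^n(C_\bullet) \in \nang$, by applying \emph{(N2*)} $n$ times to $C_\bullet$; on the other hand $L^n(A_\bullet) = L^{n-1}\bigl(L(A_\bullet)\bigr) \in \nang$, by applying \emph{(N2*)} only $n-1$ times to $L(A_\bullet)$ --- and this is the one place the hypothesis on $A_\bullet$ enters. Moreover $L^n$ returns every term and morphism to its original slot, but with $\Sigma$ applied and a global sign $(-1)^n$; so, because $C_\bullet$ and $A_\bullet$ share their first two objects $A_1, A_2$ and their first morphism $\alpha_1$, the sequences $L^n(C_\bullet)$ and $L^n(A_\bullet)$ share their first two objects $\Sigma A_1, \Sigma A_2$ and their first morphism $(-1)^n \Sigma \alpha_1$.

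Consequently the commutative square having $L^n(C_\bullet)$ and $L^n(A_\bullet)$ as its two rows and $1_{\Sigma A_1}, 1_{\Sigma A_2}$ as its first two vertical maps can, by \emph{(N3)}, be completed to a morphism $\theta \colon L^n(C_\bullet) \to L^n(A_\bullet)$; being the identity in degrees $1$ and $2$, $\theta$ is a weak isomorphism. Applying $R^n$ to $\theta$ then gives a morphism $R^n(\theta) \colon C_\bullet \to A_\bullet$ (using $R^n L^n = \mathrm{id}$ on $n$-$\Sigma$-sequences), whose component in each degree $i$ is $\Sigma^{-1}$ of that of $\theta$; in particular it is the identity in degrees $1$ and $2$, so it is again a weak isomorphism. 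Since $C_\bullet \in \nang$ and both $C_\bullet$ and $A_\bullet$ are exact, axiom \emph{(N1*)(a)} yields $A_\bullet \in \nang$, which supplies the missing half of \emph{(N2)}.

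The substance of the argument --- and, I expect, its only real point --- is the passage to the $n$-th left rotation: this is what simultaneously forces both $C_\bullet$ and $A_\bullet$ into $\nang$ while realigning their first morphisms, so that \emph{(N3)} can be invoked with identities in the first two positions. Every more direct comparison of $A_\bullet$ with its completion fails, either because $A_\bullet$ is not yet an $n$-angle or because the completion's second term need not agree with that of $A_\bullet$. Once the $L^n$ trick is in hand, the remaining steps are routine.
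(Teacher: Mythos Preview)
Your proof is correct and follows essentially the same approach as the paper's: complete the first morphism of the target sequence to an $n$-angle, rotate both sequences leftward enough times to align their first two terms, invoke (N3) to produce a morphism with identity components in degrees $1$ and $2$, then undo the rotation to obtain a weak isomorphism and conclude via (N1*)(a). The only cosmetic differences are that the paper phrases the missing implication as ``$A_\bullet \in \nang \Rightarrow R(A_\bullet) \in \nang$'' (so it completes $(-1)^n\Sigma^{-1}\alpha_n$ rather than $\alpha_1$, and rotates one sequence $n-1$ times and the other $n$ times), and it undoes the rotation by literally applying $\Sigma^{-1}$ to the diagram and adjusting signs rather than invoking $R^n$ on the morphism; these amount to the same computation under the relabeling $A_\bullet \leftrightarrow R(A_\bullet)$.
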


\begin{proof}
  The implication (1) $\Rightarrow$ (2) is trivial.  Assume therefore
  that $\nang$ satisfies (N2*), and let
  \begin{center}
    \begin{tikzpicture}
      \node[left] (Abullet) at (-1.25,0.6){$A_\bullet\colon$};

      \node (A1) at (0,0.6){$A_1$};
      \node (A2) at (1.5,0.6){$A_2$};
      \node (Adots) at (3,0.6){$\cdots$};
      \node (An) at (4.5,0.6){$A_n$};
      \node (SA1) at (6,0.6){$\Sigma A_1$};

      \begin{scope}[font=\scriptsize,->,midway,above]
        \draw (A1) -- node{$\alpha_1$} (A2);
        \draw (A2) -- node{$\alpha_2$} (Adots);
        \draw (Adots) -- node{$\alpha_{n - 1}$} (An);
        \draw (An) -- node{$\alpha_n$} (SA1);
      \end{scope}
    \end{tikzpicture}
  \end{center}
  be an $n$-$\Sigma$-sequence in $\nang$.  By repeatedly applying
  (N2*), we obtain the $n$-$\Sigma$-sequence
  \begin{center}
    \begin{tikzpicture}
      \node (X1) at (0.5,0.6){$A_n$};
      \node (X2) at (2,0.6){$\Sigma A_1$};
      \node (Xdots) at (4,0.6){$\cdots$};
      \node (Xn) at (6.5,0.6){$\Sigma A_{n-1}$};
      \node (SX1) at (9,0.6){$\Sigma A_n$};

      \begin{scope}[font=\scriptsize,->,midway,above]
        \draw (X1) -- node{$\alpha_n$} (X2);
        \draw (X2) -- node{$(-1)^n \Sigma \alpha_1$} (Xdots);
        \draw (Xdots) -- node{$(-1)^n \Sigma\alpha_{n-2}$} (Xn);
        \draw (Xn) -- node{$(-1)^n \Sigma \alpha_{n-1}$} (SX1);      
      \end{scope}
    \end{tikzpicture}
  \end{center}
  in $\nang$.  Now use (N1*)(c) to complete the morphism $\Sigma^{-1}
  A_n \xrightarrow{(-1)^n \Sigma^{-1} \alpha_n} A_1$ to an
  $n$-$\Sigma$-sequence
  \begin{center}
    \begin{tikzpicture}
      \node (X1) at (0,0.6){$\Sigma^{-1} A_n$};
      \node (X2) at (2.5,0.6){$A_1$};
      \node (X3) at (4,0.6){$B_3$};
      \node (Xdots) at (5.5,0.6){$\cdots$};
      \node (Xn) at (7,0.6){$B_n$};
      \node (SX1) at (8.5,0.6){$A_n$};

      \begin{scope}[font=\scriptsize,->,midway,above]      
        \draw (X1) -- node{$(-1)^n \Sigma^{-1} \alpha_n$} (X2);
        \draw (X2) -- node{$\beta_2$} (X3);
        \draw (X3) -- node{$\beta_3$} (Xdots);
        \draw (Xdots) -- node{$\beta_{n-1}$} (Xn);
        \draw (Xn) -- node{$\beta_n$} (SX1);        
      \end{scope}
    \end{tikzpicture}
  \end{center}
  in $\nang$.  By repeated use of (N2*), we obtain the
  $n$-$\Sigma$-sequence
  \begin{center}
    \begin{tikzpicture}
      \node (X1) at (0,0.6){$A_n$};
      \node (X2) at (1.5,0.6){$\Sigma A_1$};
      \node (X3) at (3.5,0.6){$\Sigma B_3$};
      \node (Xdots) at (5.5,0.6){$\cdots$};
      \node (Xn) at (7.5,0.6){$\Sigma B_n$};
      \node (SX1) at (9.5,0.6){$\Sigma A_n$};

      \begin{scope}[font=\scriptsize,->,midway,above]
        \draw (X1) -- node{$\alpha_n$} (X2);
        \draw (X2) -- node{$(-1)^n \Sigma \beta_2$} (X3);
        \draw (X3) -- node{$(-1)^n \Sigma \beta_3$} (Xdots);
        \draw (Xdots) -- node{$(-1)^n \Sigma \beta_{n-1}$} (Xn);
        \draw (Xn) -- node{$(-1)^n \Sigma \beta_n$} (SX1);      
      \end{scope}
    \end{tikzpicture}
  \end{center}
  in $\nang$.  By (N3), we may complete the diagram
  \begin{center}
    \begin{tikzpicture}
      \node (X1) at (0,1.5){$A_n$};
      \node (X2) at (1.5,1.5){$\Sigma A_1$};
      \node (X3) at (3.5,1.5){$\Sigma B_3$};
      \node (Xdots) at (5.5,1.5){$\cdots$};
      \node (Xn) at (7.5,1.5){$\Sigma B_n$};
      \node (SX1) at (9.5,1.5){$\Sigma A_n$};
            
      \node (Y1) at (0,0){$A_n$};
      \node (Y2) at (1.5,0){$\Sigma A_1$};
      \node (Y3) at (3.5,0){$\Sigma A_2$};
      \node (Ydots) at (5.5,0){$\cdots$};
      \node (Yn) at (7.5,0){$\Sigma A_{n-1}$};
      \node (SY1) at (9.5,0){$\Sigma A_n$};

      \begin{scope}[font=\scriptsize,->,midway]
        \draw (X1) -- node[above]{$\alpha_n$} (X2);
        \draw (X2) -- node[above]{$(-1)^n \Sigma \beta_2$} (X3);
        \draw (X3) -- node[above]{$(-1)^n \Sigma \beta_3$} (Xdots);
        \draw (Xdots) -- node[above]{$(-1)^n \Sigma \beta_{n-1}$}
          (Xn);
        \draw (Xn) -- node[above]{$(-1)^n \Sigma \beta_n$} (SX1);
        
        \draw (Y1) -- node[above]{$\alpha_n$} (Y2);
        \draw (Y2) -- node[above]{$(-1)^n \Sigma \alpha_1$} (Y3);
        \draw (Y3) -- node[above]{$(-1)^n \Sigma \alpha_2$} (Ydots);
        \draw (Ydots) -- node[above]{$(-1)^n \Sigma\alpha_{n-2}$}
          (Yn);
        \draw (Yn) -- node[above]{$(-1)^n \Sigma \alpha_{n-1}$}
          (SY1);
       
        \draw[-,double equal sign distance] (X1) -- (Y1);
        \draw[-,double equal sign distance] (X2) -- (Y2);
        \draw[dashed] (X3) -- node[right]{$\varphi_3$} (Y3);
        \draw[dashed] (Xn) -- node[right]{$\varphi_n$} (Yn);
        \draw[-,double equal sign distance] (SX1) -- (SY1);  
      \end{scope}
    \end{tikzpicture}
  \end{center}
  and obtain a morphism of $n$-$\Sigma$-sequences.  By applying the
  automorphism $\Sigma^{-1}$ to the whole diagram, and multiplying all
  maps with $(-1)^n$, we obtain a weak isomorphism
  \begin{center}
    \begin{tikzpicture}
      \node (X1) at (0,1.5){$\Sigma^{-1}A_n$};
      \node (X2) at (2.25,1.5){$A_1$};
      \node (X3) at (4.25,1.5){$B_3$};
      \node (Xdots) at (6.25,1.5){$\cdots$};
      \node (Xn) at (8.25,1.5){$B_n$};
      \node (SX1) at (10.25,1.5){$A_n$};
            
      \node (Y1) at (0,0){$\Sigma^{-1} A_n$};
      \node (Y2) at (2.25,0){$A_1$};
      \node (Y3) at (4.25,0){$A_2$};
      \node (Ydots) at (6.25,0){$\cdots$};
      \node (Yn) at (8.25,0){$A_{n-1}$};
      \node (SY1) at (10.25,0){$A_n$};

      \begin{scope}[font=\scriptsize,->,midway]
        \draw (X1) -- node[above]{$(-1)^n \Sigma^{-1} \alpha_n$} (X2);
        \draw (X2) -- node[above]{$\beta_2$} (X3);
        \draw (X3) -- node[above]{$\beta_3$} (Xdots);
        \draw (Xdots) -- node[above]{$\beta_{n-1}$} (Xn);
        \draw (Xn) -- node[above]{$\beta_n$} (SX1);
        
        \draw (Y1) -- node[above]{$(-1)^n \Sigma^{-1} \alpha_n$} (Y2);
        \draw (Y2) -- node[above]{$\alpha_1$} (Y3);
        \draw (Y3) -- node[above]{$\alpha_2$} (Ydots);
        \draw (Ydots) -- node[above]{$\alpha_{n-2}$} (Yn);
        \draw (Yn) -- node[above]{$\alpha_{n-1}$} (SY1);
        
        \draw (X1) -- node[right]{$(-1)^n$} (Y1);
        \draw (X2) -- node[right]{$(-1)^n$} (Y2);
        \draw (X3) -- node[right]{$(-1)^n \Sigma^{-1} \varphi_3$}
          (Y3);
        \draw (Xn) -- node[right]{$(-1)^n \Sigma^{-1} \varphi_n$}
          (Yn);
        \draw (SX1) -- node[right]{$(-1)^n$} (SY1);    
      \end{scope}
    \end{tikzpicture}
  \end{center}
  of $n$-$\Sigma$-sequences.  The top row belongs to $\nang$ and is
  therefore exact by Lemma \ref{lem:exactness}, whereas the bottom row
  is the right rotation of $A_\bullet$.  Since $A_\bullet$ is exact,
  so is its right rotation, and from (N1*)(a) we conclude that this
  right rotation also belongs to $\nang$.
\end{proof}

Collecting the results in this section gives the following.

\begin{theorem}
  \label{thm:replaceN1andN2}
  For a collection $\nang$ of $n$-$\Sigma$-sequences, the following
  are equivalent:
  \begin{itemize}
  \item[(1)] $\nang$ satisfies \emph{(N1)}, \emph{(N2)} and
    \emph{(N3)},
  \item[(2)] $\nang$ satisfies \emph{(N1*)}, \emph{(N2)} and
    \emph{(N3)},
  \item[(3)] $\nang$ satisfies \emph{(N1*)}, \emph{(N2*)} and
    \emph{(N3)}.
  \end{itemize}
\end{theorem}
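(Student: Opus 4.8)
The plan is to obtain this statement as a purely formal consequence of Theorems~\ref{tim:newN1} and~\ref{thm:rightrotation}, with no further constructions, diagram chases, or appeals to exactness beyond what those theorems already package. I would prove the two biconditionals (1)~$\Leftrightarrow$~(2) and (2)~$\Leftrightarrow$~(3) separately, using statement~(2) as the common hub, and then chain them to obtain (1)~$\Leftrightarrow$~(2)~$\Leftrightarrow$~(3).

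For (1)~$\Leftrightarrow$~(2): statements~(1) and~(2) share the background axioms (N2) and (N3), differing only in whether (N1) or (N1*) is imposed. Hence Theorem~\ref{tim:newN1} applies verbatim and asserts precisely that, assuming (N2) and (N3), the axioms (N1) and (N1*) are equivalent, which is exactly the equivalence of~(1) and~(2). For (2)~$\Leftrightarrow$~(3): statements~(2) and~(3) share the background axioms (N1*) and (N3), differing only in whether (N2) or (N2*) is imposed. Hence Theorem~\ref{thm:rightrotation} applies verbatim and asserts that, assuming (N1*) and (N3), the axioms (N2) and (N2*) are equivalent, which is exactly the equivalence of~(2) and~(3).

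There is essentially no obstacle here; the single point requiring a moment's care is that the two cited theorems carry \emph{different} fixed background hypotheses — (N2) and (N3) in one, (N1*) and (N3) in the other — so the argument must route through statement~(2), which is the only one of the three that simultaneously shares the hypotheses of Theorem~\ref{tim:newN1} with~(1) and those of Theorem~\ref{thm:rightrotation} with~(3). In particular one cannot pair~(1) directly against~(3). Once this routing is observed, the proof reduces to citing the two preceding theorems in sequence.
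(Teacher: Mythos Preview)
Your proposal is correct and matches the paper's approach exactly: the paper presents this theorem with no proof beyond the remark that it follows by collecting the results of the section, i.e., Theorems~\ref{tim:newN1} and~\ref{thm:rightrotation}. Your observation that the argument must route through~(2) because of the differing background hypotheses is a nice clarification, though the paper does not make this explicit.
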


\section{Axiom (N4)}
\label{sec:axiomN4}
For triangulated categories, it is a well-known fact that Verdier's
original octahedral axiom has several equivalent representations, see
e.g.\ \cite{HJ} for a discussion.  It is natural to ask whether this
also holds true for general $n$-angulated categories.  We prove in
this section that it does: we introduce a higher ``octahedral axiom''
(N4*) for $n$-angulated categories, and show that it is equivalent to
axiom (N4).

What is the essence of the classical octahedral axiom for triangulated
categories?  It starts with three given triangles
\begin{align*}
  & A_1 \to A_2 \to A_3 \to \Sigma A_1 \\
  & A_1 \to B_2 \to B_3 \to \Sigma A_1 \\
  & A_2 \to B_2 \to C_3 \to \Sigma A_2
\end{align*}
that are connected, in that each pair of triangles share a common
object. The axiom then guarantees the existence of two new morphisms,
and from these new morphisms we obtain three things:
\begin{enumerate}
\item A morphism of triangles.
\item A new triangle, whose objects are objects in the three original
  triangles.
\item Commutativity relations between morphisms.
\end{enumerate}
The reason why the axiom is called the ``octahedral axiom'' is that
everything fits into an octahedral whose vertices are the objects, and
where the edges are the morphisms.

The essence of the higher octahedral axiom for $n$-angulated
categories that we now introduce is exactly the same.  It starts with
three given $n$-angles, and guarantees the existence of $3n - 7$ new
morphisms.  From these new morphisms we obtain a morphism of
$n$-angles, a new $n$-angle and a certain commutativity relation
between morphisms.

\newpage

\begin{itemize}
\item[\textbf{(N4*)}] Given a commutative diagram
  \begin{center}
    \begin{tikzpicture}
      \node (A1oppe) at (0,1.5){$A_1$};
      \node (A2) at (2,1.5){$A_2$};
      \node (A3) at (4,1.5){$A_3$};
      \node (Adots) at (6,1.5){$\cdots$};
      \node (An-1) at (8,1.5){$A_{n - 1}$};
      \node (An) at (10,1.5){$A_n$};
      \node (SA1oppe) at (12,1.5){$\Sigma A_1$};

      \node (A1nede) at (0,0){$A_1$};
      \node (B2) at (2,0){$B_2$};
      \node (B3) at (4,0){$B_3$};
      \node (Bdots) at (6,0){$\cdots$};
      \node (Bn-1) at (8,0){$B_{n - 1}$};
      \node (Bn) at (10,0){$B_n$};
      \node (SA1nede) at (12,0){$\Sigma A_1$};

      \node (C3) at (2,-1.5){$C_3$};
      \node (Cdots) at (2,-3){$\vdots$};
      \node (Cn-1) at (2,-4.5){$C_{n - 1}$};
      \node (Cn) at (2,-6){$C_n$};
      \node (SA2) at (2,-7.5){$\Sigma A_2$};

      \begin{scope}[font=\scriptsize,->,midway]
        \draw[-,double equal sign distance] (A1oppe) -- (A1nede);
        \draw (A2) -- node[right]{$\varphi_2$} (B2);
        \draw[-,double equal sign distance] (SA1oppe) -- (SA1nede);
        \draw (B2) -- node[right]{$\gamma_2$} (C3);
        \draw (C3) -- node[right]{$\gamma_3$} (Cdots); 
        \draw (Cdots) -- node[right]{$\gamma_{n - 2}$}( Cn-1);
        \draw (Cn-1) -- node[right]{$\gamma_{n - 1}$} (Cn);
        \draw (Cn) -- node[right]{$\gamma_n$} (SA2);

        \draw (A1oppe) -- node[above]{$\alpha_1$} (A2);
        \draw (A2) -- node[above]{$\alpha_2$} (A3);
        \draw (A3) -- node[above]{$\alpha_3$} (Adots);
        \draw (Adots) -- node[above]{$\alpha_{n - 2}$} (An-1);
        \draw (An-1) -- node[above]{$\alpha_{n - 1}$} (An);
        \draw (An) -- node[above]{$\alpha_n$} (SA1oppe);
        \draw (A1nede) -- node[above]{$\beta_1$} (B2);
        \draw (B2) -- node[above]{$\beta_2$} (B3);
        \draw (B3) -- node[above]{$\beta_3$} (Bdots);
        \draw (Bdots) -- node[above]{$\beta_{n - 2}$} (Bn-1);
        \draw (Bn-1) -- node[above]{$\beta_{n - 1}$} (Bn);
        \draw (Bn) -- node[above]{$\beta_n$} (SA1nede);
      \end{scope}
    \end{tikzpicture}
  \end{center}
  whose top rows and second column are $n$-angles.  Then there exist
  morphisms $A_i \xrightarrow{\varphi_i} B_i$ ($3 \leq i \leq n$) and
  $\psi_j$ ($1 \leq j \leq 2n-5$) with the following two properties:
  \begin{enumerate}
  \item The sequence
    $(1,\varphi_2,\varphi_3,\ldots,\varphi_n)$ is a morphism
    of $n$-angles.
  \item The $n$-$\Sigma$-sequence
    \begin{center}
      \begin{tikzpicture}
        \node (A3) at (0,1.25){$A_3$};
        \node (A4B3) at (2,1.25){$A_4 \oplus B_3$};
        \node (A5B4C3) at (5,1.25){$A_5 \oplus B_4 \oplus C_3$};
        \node (A6B5C4) at (9,1.25){$A_6 \oplus B_5 \oplus C_4$};
        \node (tdots) at (12,1.25){$\cdots$};
        
        \node (mdots) at (0.25,0){$ $};
        \node (AnBn-1Cn-2) at (4.75,0){$A_n \oplus B_{n - 1} \oplus
          C_{n - 2}$};
        \node (BnCn-1) at (9.5,0){$B_n \oplus C_{n - 1}$};
        \node (mend) at (12.25,0){$C_n$};
        \node (mend2) at (14,0){$\Sigma A_3$};
        
        \begin{scope}[font=\scriptsize,->,midway,above]
          \draw (A3) -- node{$\left[
              \begin{smallmatrix}
                \alpha_3\\
                \varphi_3
              \end{smallmatrix}
            \right]$} (A4B3);
          \draw (A4B3) -- node{$\left[
              \begin{smallmatrix}
                -\alpha_4 & 0\\
                \hfill \varphi_4 & -\beta_3\\
                \hfill \psi_2 & \hfill \psi_1
              \end{smallmatrix}
            \right]$} (A5B4C3);
          \draw (A5B4C3) -- node{$\left[
              \begin{smallmatrix}
                -\alpha_5 & 0 & 0\\
                -\varphi_5 & -\beta_4 & 0\\
                \hfill \psi_4 & \hfill \psi_3 & \gamma_3
              \end{smallmatrix}
            \right]$} (A6B5C4);
          \draw (A6B5C4) -- node{$\left[
              \begin{smallmatrix}
                -\alpha_6 & 0 & 0\\
                \hfill \varphi_6 & -\beta_5 & 0\\
                \hfill \psi_6 & \hfill \psi_5 & \gamma_4
              \end{smallmatrix}
            \right]$} (tdots);
          
          \draw (mdots) -- node{$\left[
              \begin{smallmatrix}
                -\alpha_{n - 1} & 0 & 0\\
                (-1)^{n + 1} \varphi_{n - 1} & -\beta_{n - 2} & 0\\
                \psi_{2n - 8} & \psi_{2n - 9} & \gamma_{n - 3}
              \end{smallmatrix}
            \right]$} (AnBn-1Cn-2);
          \draw (AnBn-1Cn-2) -- node{$\left[
              \begin{smallmatrix}
                (-1)^n \varphi_n & -\beta_{n - 1} & 0\\
                \psi_{2n - 6} & \psi_{2n - 7} & \gamma_{n - 2}
              \end{smallmatrix}
            \right]$} (BnCn-1);
          \draw (BnCn-1) -- node{$\left[
              \begin{smallmatrix}
                \psi_{2n - 5} & \gamma_{n - 1}
              \end{smallmatrix}
            \right]$} (mend);
          \draw (mend) -- node{$\Sigma \alpha_2 \circ \gamma_n$}
            (mend2);
        \end{scope}
      \end{tikzpicture}
    \end{center}
    is an $n$-angle, and $\gamma_n \circ \psi_{2n-5} = \Sigma \alpha_1
    \circ \beta_n$.
  \end{enumerate}
\end{itemize}

For small values of $n$, objects $A_i,B_i,C_i$ with $i > n$ appearing
in the axiom should be interpreted as zero objects (and so should
objects $C_i$ with $i < 3$).  Specifically, when $n = 3$, that is,
when $\C$ is a triangulated category, the triangle in (2) becomes
\begin{equation*}
  A_3 \xrightarrow{\varphi_3} B_3 \xrightarrow{\psi_1} C_3
  \xrightarrow{\Sigma \alpha_2 \circ \gamma_3} \Sigma A_3
\end{equation*}
and for $n = 4$, the $4$-angle in (2) becomes
\begin{equation*}
  A_3 \xrightarrow{\left[
      \begin{smallmatrix}
        \alpha_3\\
        \varphi_3
      \end{smallmatrix}
    \right]} A_4 \oplus B_3 \xrightarrow{\left[
      \begin{smallmatrix}
        \varphi_4 & -\beta_3\\
        \psi_2 & \hfill \psi_1
      \end{smallmatrix}
    \right]} B_4 \oplus C_3 \xrightarrow{\left[
      \begin{smallmatrix}
        \psi_3 & \gamma_3\\
      \end{smallmatrix}
    \right]} C_4 \xrightarrow{\Sigma \alpha_2 \circ \gamma_4} \Sigma
  A_3.
\end{equation*}

Our aim is to prove that axiom (N4) may be replaced by the new axiom
(N4*).  In other words, we shall prove that if our category $\C$ is
pre-triangulated (that is, $\C$ satisfies (N1), (N2) and (N3)), then
it satisfies (N4) if and only if it satisfies (N4*).  In order to
prove this, we need the following lemma.

\begin{lemma}
  \label{lem:hmtpy}
  Suppose $\C$ is $n$-angulated, and let
  \begin{center}
    \begin{tikzpicture}
      \node (A1) at (0,1.5){$A_1$};
      \node (A2) at (1.75,1.5){$A_2$};
      \node (A3) at (3.5,1.5){$A_3$};
      \node (Adots) at (5.25,1.5){$\cdots$};
      \node (An) at (7,1.5){$A_n$};
      \node (SA1) at (8.75,1.5){$\Sigma A_1$};
      
      \node (B1) at (0,0){$A_1$};
      \node (B2) at (1.75,0){$B_2$};
      \node (B3) at (3.5,0){$B_3$};
      \node (Bdots) at (5.25,0){$\cdots$};
      \node (Bn) at (7,0){$B_n$};
      \node (SB1) at (8.75,0){$\Sigma A_1$};
      
      \begin{scope}[font=\scriptsize,->,midway]
        \draw[-,double equal sign distance] (A1) -- (B1);
        \draw (A2) -- node[right]{$\varphi_2$} (B2);
        \draw[-,double equal sign distance] (SA1) -- (SB1);
        
        \draw (A1) -- node[above]{$\alpha_1$} (A2);
        \draw (A2) -- node[above]{$\alpha_2$} (A3);
        \draw (A3) -- node[above]{$\alpha_3$} (Adots);
        \draw (Adots) -- node[above]{$\alpha_{n - 1}$} (An);
        \draw (An) -- node[above]{$\alpha_n$} (SA1);
        \draw (B1) -- node[above]{$\beta_1$} (B2);
        \draw (B2) -- node[above]{$\beta_2$} (B3);
        \draw (B3) -- node[above]{$\beta_3$} (Bdots);
        \draw (Bdots) -- node[above]{$\beta_{n - 1}$} (Bn);
        \draw (Bn) -- node[above]{$\beta_n$} (SB1);
      \end{scope}
    \end{tikzpicture}
  \end{center}
  be a commutative diagram whose rows are $n$-angles.  Apply axiom
  \emph{(N4)} and complete the diagram to a morphism
  \begin{center}
    \begin{tikzpicture}
      \node (A1) at (0,1.5){$A_1$};
      \node (A2) at (1.75,1.5){$A_2$};
      \node (A3) at (3.5,1.5){$A_3$};
      \node (Adots) at (5.25,1.5){$\cdots$};
      \node (An) at (7,1.5){$A_n$};
      \node (SA1) at (8.75,1.5){$\Sigma A_1$};

      \node (B1) at (0,0){$A_1$};
      \node (B2) at (1.75,0){$B_2$};
      \node (B3) at (3.5,0){$B_3$};
      \node (Bdots) at (5.25,0){$\cdots$};
      \node (Bn) at (7,0){$B_n$};
      \node (SB1) at (8.75,0){$\Sigma A_1$};

      \begin{scope}[font=\scriptsize,->,midway]
        \draw[-,double equal sign distance] (A1) -- (B1);
        \draw (A2) -- node[right]{$\varphi_2$} (B2);
        \draw (A3) -- node[right]{$\varphi_3$} (B3);
        \draw (An) -- node[right]{$\varphi_n$} (Bn);
        \draw[-,double equal sign distance] (SA1) -- (SB1);

        \draw (A1) -- node[above]{$\alpha_1$} (A2);
        \draw (A2) -- node[above]{$\alpha_2$} (A3);
        \draw (A3) -- node[above]{$\alpha_3$} (Adots);
        \draw (Adots) -- node[above]{$\alpha_{n - 1}$} (An);
        \draw (An) -- node[above]{$\alpha_n$} (SA1);
        \draw (B1) -- node[above]{$\beta_1$} (B2);
        \draw (B2) -- node[above]{$\beta_2$} (B3);
        \draw (B3) -- node[above]{$\beta_3$} (Bdots);
        \draw (Bdots) -- node[above]{$\beta_{n - 1}$} (Bn);
        \draw (Bn) -- node[above]{$\beta_n$} (SB1);
      \end{scope}
    \end{tikzpicture}
  \end{center}
  of $n$-angles, in such a way that the mapping cone is also an
  $n$-angle.  Then the $n$-$\Sigma$-sequence
  \begin{align*}
    A_2 \xrightarrow{\left[
        \begin{smallmatrix}
          -\alpha_2\\
          \hfill \varphi_2
        \end{smallmatrix}
      \right]} A_3 \oplus B_2 &\xrightarrow{\left[
        \begin{smallmatrix}
          \alpha_3 & 0\\
          \varphi_3 & \beta_2
        \end{smallmatrix}
      \right]} A_4 \oplus B_3 \xrightarrow{\left[
        \begin{smallmatrix}
          \hfill \alpha_4 & 0\\
          -\varphi_4 & \beta_3
        \end{smallmatrix}
      \right]} \dots\\
    & \dots \xrightarrow{\left[
        \begin{smallmatrix}
          \alpha_{n - 1} & 0\\
          (-1)^n \varphi_{n - 1} & \beta_{n - 2}
        \end{smallmatrix}
      \right]} A_n \oplus B_{n - 1} \xrightarrow{\left[
        \begin{smallmatrix}
          (-1)^{n + 1} \varphi_n & \beta_{n - 1}
        \end{smallmatrix}
      \right]} B_n \xrightarrow{\Sigma\alpha_1 \circ \beta_n} \Sigma A_2
  \end{align*}  
  is an $n$-angle.
\end{lemma}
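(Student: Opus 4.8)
The plan is to derive the statement from the mapping cone axiom (N4) together with the closure properties built into (N1)(a). Applying (N4) to the given diagram (in which $\varphi_1 = 1_{A_1}$, so that $B_1 = A_1$), let $C_\bullet$ denote the resulting mapping cone, which is an $n$-angle. Explicitly, the $k$-th object of $C_\bullet$ is $A_{k+1}\oplus B_k$ for $1\le k\le n-1$, its $n$-th object is $\Sigma A_1\oplus B_n$, and (using $\Sigma\varphi_1 = 1_{\Sigma A_1}$) its morphisms are $\left[\begin{smallmatrix}-\alpha_{k+1} & 0\\\varphi_{k+1} & \beta_k\end{smallmatrix}\right]$ together with $\left[\begin{smallmatrix}-\Sigma\alpha_1 & 0\\1 & \beta_n\end{smallmatrix}\right]$ at the end. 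For later use I record the relations that hold here: $\alpha_{i+1}\circ\alpha_i = 0$ and $\beta_{i+1}\circ\beta_i = 0$ (consecutive morphisms of an $n$-angle compose to zero), $\beta_1 = \varphi_2\circ\alpha_1$, and $\beta_n\circ\varphi_n = \alpha_n$, the last two being the commutativity of the two outer squares of the morphism $(1_{A_1},\varphi_2,\ldots,\varphi_n)$.

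Write $D_\bullet$ for the $n$-$\Sigma$-sequence appearing in the statement, and let $E_\bullet$ be the left rotation of the trivial $n$-angle on $A_1$, that is, $A_1 \to 0 \to \cdots \to 0 \to \Sigma A_1 \xrightarrow{(-1)^n} \Sigma A_1$. The heart of the proof is the construction of an isomorphism of $n$-$\Sigma$-sequences $C_\bullet \xrightarrow{\sim} D_\bullet\oplus E_\bullet$. The components I would use are: the shear $\left[\begin{smallmatrix}1 & \alpha_1\\0 & 1\end{smallmatrix}\right]$ on $A_2\oplus A_1$ in the first spot; the sign matrix $\left[\begin{smallmatrix}(-1)^k & 0\\0 & 1\end{smallmatrix}\right]$ on $A_{k+1}\oplus B_k$ in spot $k$ for $2\le k\le n-1$; and the matrix $\left[\begin{smallmatrix}0 & 1\\(-1)^n & (-1)^n\beta_n\end{smallmatrix}\right]\colon \Sigma A_1\oplus B_n \to B_n\oplus\Sigma A_1$ in the last spot. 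Each of these is invertible, and checking that all the resulting squares commute is a routine block-matrix computation: in the first spot it uses $\alpha_2\alpha_1 = 0$ and $\beta_1 = \varphi_2\alpha_1$; in the spots $2\le k\le n-2$ it is pure sign bookkeeping; in spot $n-1$ it uses $\beta_n\varphi_n = \alpha_n$ and $\beta_n\beta_{n-1} = 0$; and in the last spot the relevant terms simply cancel (together with $(-1)^{2n}=1$).

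Granting the isomorphism, the conclusion follows at once: $C_\bullet$ is an $n$-angle by (N4), hence $D_\bullet\oplus E_\bullet$ is an $n$-angle because $\nang$ is closed under isomorphisms (part of (N1)(a)), and then $D_\bullet$ is an $n$-angle because $\nang$ is closed under direct summands (also part of (N1)(a)). Observe that one does not even need to know separately that $E_\bullet$ lies in $\nang$; only that $D_\bullet\oplus E_\bullet$ is a genuine $n$-$\Sigma$-sequence isomorphic to the $n$-angle $C_\bullet$.

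The step I expect to be the main obstacle is the behaviour at the last spot. Away from there, passing from $C_\bullet$ to $D_\bullet$ is merely the deletion of a trivial summand sitting ``diagonally'', which is handled by the shear and the signs. At spot $n$, however, the summands $\Sigma A_1$ and $B_n$ must genuinely be recombined: the morphism $\beta_n$ enters the isomorphism, and the identity $\beta_n\varphi_n = \alpha_n$ — which is exactly the commutativity of the right-hand square of the morphism of $n$-angles, using $\varphi_1 = 1_{A_1}$ — is what forces the square at spot $n-1$ to commute. For small $n$ (notably $n = 3$, where the middle spots collapse) one should check that these degenerate cases are covered, but the isomorphism above is uniform in $n$ and goes through unchanged.
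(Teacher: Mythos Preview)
Your argument is correct and essentially the same as the paper's: both exhibit $D_\bullet$ as a direct summand of the mapping cone via shear and sign matrices at the extreme spots, then invoke (N1)(a). The paper presents this as a section--retraction diagram (top and bottom rows $D_\bullet$, middle row the cone), while you package the same maps as an explicit isomorphism $C_\bullet \cong D_\bullet \oplus E_\bullet$ with $E_\bullet$ the left rotation of the trivial $n$-angle on $A_1$; the matrices and the relations used ($\alpha_2\alpha_1 = 0$, $\beta_1 = \varphi_2\alpha_1$, $\beta_n\varphi_n = \alpha_n$, $\beta_n\beta_{n-1}=0$) coincide.
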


\begin{proof}
  The mapping cone is the middle $n$-$\Sigma$-sequence in the direct
  sum diagram
  \begin{center}
    \begin{tikzpicture}
      \node (A2) at (0,4){$A_2$};
      \node (A3+B2) at (2.6,4){$A_3 \oplus B_2$};
      \node (dots) at (4.4,4){$\cdots$};
      \node (An+Bn-1) at (7.5,4){$A_n \oplus B_{n - 1}$};
      \node (Bn) at (10.5,4){$B_n$};
      \node (SA2) at (13.25,4){$\Sigma A_2$};

      \node (A2+A1) at (0,2){$A_2 \oplus A_1$};
      \node (A3+B2m) at (2.6,2){$A_3 \oplus B_2$};
      \node (dotsm) at (4.4,2){$\dots$};
      \node (An+Bn-1m) at (7.5,2){$A_n \oplus B_{n - 1}$};
      \node (SA1+Bn) at (10.5,2){$\Sigma A_1 \oplus B_n$};
      \node (SA2+SA1) at (13.25,2){$\Sigma A_2 \oplus \Sigma A_1$};

      \node (A2b) at (0,0){$A_2$};
      \node (A3+B2b) at (2.6,0){$A_3 \oplus B_2$};
      \node (dotsb) at (4.4,0){$\cdots$};
      \node (An+Bn-1b) at (7.5,0){$A_n \oplus B_{n - 1}$};
      \node (Bnb) at (10.5,0){$B_n$};
      \node (SA2b) at (13.25,0){$\Sigma A_2$};

      \begin{scope}[font=\scriptsize,->,midway]
        \begin{scope}[right]
          \draw (A2) -- node{$\left[
              \begin{smallmatrix}
                1\\
                0
              \end{smallmatrix}
            \right]$} (A2+A1);
          \draw (A3+B2) -- node{$\left[
              \begin{smallmatrix}
                1 & 0\\
                0 & 1
              \end{smallmatrix}
            \right]$} (A3+B2m);
          \draw (An+Bn-1) -- node{$\left[
              \begin{smallmatrix}
                (-1)^{n + 1} & 0\\
                0 & 1
              \end{smallmatrix}
            \right]$} (An+Bn-1m);
          \draw (Bn) -- node{$\left[
              \begin{smallmatrix}
                - \beta_n\\
                1
              \end{smallmatrix}
            \right]$} (SA1+Bn);
          \draw (SA2) -- node{$\left[
              \begin{smallmatrix}
                1\\
                0
              \end{smallmatrix}
            \right]$} (SA2+SA1);
          
          \draw (A2+A1) -- node{$\left[
              \begin{smallmatrix}
                1 & \alpha_1
              \end{smallmatrix}
            \right]$}(A2b);
          \draw (A3+B2m) -- node{$\left[
              \begin{smallmatrix}
                1 & 0\\
                0 & 1
              \end{smallmatrix}
            \right]$}(A3+B2b);
          \draw (An+Bn-1m) -- node{$\left[
              \begin{smallmatrix}
                (-1)^{n + 1} & 0\\
                0 & 1
              \end{smallmatrix}
            \right]$}(An+Bn-1b);
          \draw (SA1+Bn) -- node{$\left[
              \begin{smallmatrix}
                0 & 1
              \end{smallmatrix}
            \right]$}(Bnb);
          \draw (SA2+SA1) -- node{$\left[
              \begin{smallmatrix}
                1 & \Sigma \alpha_1 
              \end{smallmatrix}
            \right]$}(SA2b);
        \end{scope}

        \begin{scope}[above]
          \draw (A2) -- node{$\left[
              \begin{smallmatrix}
                -\alpha_2\\
                \hfill \varphi_2
              \end{smallmatrix}
            \right]$} (A3+B2);
          \draw (A3+B2) -- node{$\left[
              \begin{smallmatrix}
                \alpha_3 & 0\\
                \varphi_3 & \beta_2
              \end{smallmatrix}
            \right]$} (dots);
          \draw (dots) -- node{$\left[
              \begin{smallmatrix}
                \alpha_{n - 1} & 0\\
                (-1)^n \varphi_{n - 1} & \beta_{n - 2}
              \end{smallmatrix}
            \right]$} (An+Bn-1);
          \draw (An+Bn-1) -- node{$\left[
              \begin{smallmatrix}
                (-1)^{n + 1} \varphi_n & \beta_{n - 1}
              \end{smallmatrix}
            \right]$} (Bn);
          \draw (Bn) -- node{$\Sigma \alpha_1 \circ \beta_n$} (SA2);

          \draw (A2+A1) -- node{$\left[
              \begin{smallmatrix}
                -\alpha_2 & 0\\
                \hfill \varphi_2 & \beta_1
              \end{smallmatrix}
            \right]$} (A3+B2m);
          \draw (A3+B2m) -- node{$\left[
              \begin{smallmatrix}
                -\alpha_3 & 0\\
                \hfill \varphi_3 & \beta_2
              \end{smallmatrix}
            \right]$} (dotsm);
          \draw (dotsm) -- node{$\left[
              \begin{smallmatrix}
                -\alpha_{n - 1} & 0\\
                \hfill \varphi_{n - 1} & \beta_{n - 2}
              \end{smallmatrix}
            \right]$} (An+Bn-1m);
          \draw (An+Bn-1m) -- node{$\left[
              \begin{smallmatrix}
                -\alpha_n & 0\\
                \hfill \varphi_n & \beta_{n - 1}
              \end{smallmatrix}
            \right]$} (SA1+Bn);
          \draw (SA1+Bn) -- node{$\left[
              \begin{smallmatrix}
                \hfill -\Sigma\alpha_1 & 0\\
                1 & \beta_n
              \end{smallmatrix}
            \right]$} (SA2+SA1);

          \draw (A2b) -- node{$\left[
              \begin{smallmatrix}
                -\alpha_2\\
                \hfill \varphi_2
              \end{smallmatrix}
            \right]$} (A3+B2b);
          \draw (A3+B2b) -- node{$\left[
              \begin{smallmatrix}
                \alpha_3 & 0\\
                \varphi_3 & \beta_2
              \end{smallmatrix}
            \right]$} (dotsb);
          \draw (dotsb) -- node{$\left[
              \begin{smallmatrix}
                \alpha_{n - 1} & 0\\
                (-1)^n \varphi_{n - 1} & \beta_{n - 2}
              \end{smallmatrix}
            \right]$} (An+Bn-1b);
          \draw (An+Bn-1b) -- node{$\left[
              \begin{smallmatrix}
                (-1)^{n + 1} \varphi_n & \beta_{n - 1}
              \end{smallmatrix}
            \right]$} (Bnb);
          \draw (Bnb) -- node{$\Sigma \alpha_1 \circ \beta_n$} (SA2b);
        \end{scope}
      \end{scope}
    \end{tikzpicture}
  \end{center}
  Therefore, by axiom (N1)(a), the top (bottom) row is also an
  $n$-angle.
\end{proof}

Now we prove that axioms (N4) and (N4*) are equivalent.  We do this in
two steps, showing first that axiom (N4) implies axiom (N4*).

\begin{theorem}
  \label{thm:octa}
  If $\nang$ is a collection of $n$-$\Sigma$-sequences in $\C$
  satisfying axioms \emph{(N1), (N2), (N3)} and \emph{(N4)}, then it
  also satisfies \emph{(N4*)}.
\end{theorem}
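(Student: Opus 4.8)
The plan is to apply axiom (N4) twice, each time passing to the reduced mapping cone via Lemma~\ref{lem:hmtpy}, and then to extract the $n$-angle required in (2) as a direct summand. For the first application, the left-hand square $\beta_1 = \varphi_2\circ\alpha_1$ of the given diagram lets us apply (N4) to its top two rows, producing morphisms $\varphi_3,\dots,\varphi_n$ such that $(1_{A_1},\varphi_2,\dots,\varphi_n)$ is a morphism of $n$-angles whose mapping cone is an $n$-angle. This already establishes property~(1). By Lemma~\ref{lem:hmtpy} the reduced mapping cone
\begin{equation*}
  M_\bullet\colon\qquad A_2 \xrightarrow{\left[\begin{smallmatrix}-\alpha_2\\ \varphi_2\end{smallmatrix}\right]} A_3\oplus B_2 \longrightarrow \cdots \longrightarrow A_n\oplus B_{n-1} \xrightarrow{\left[\begin{smallmatrix}(-1)^{n+1}\varphi_n & \beta_{n-1}\end{smallmatrix}\right]} B_n \xrightarrow{\ \Sigma\alpha_1\circ\beta_n\ } \Sigma A_2
\end{equation*}
is an $n$-angle, with $M_1=A_2$, $M_k=A_{k+1}\oplus B_k$ for $2\le k\le n-1$ and $M_n=B_n$.

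Writing the second column as the $n$-angle $C_\bullet\colon A_2\xrightarrow{\varphi_2}B_2\xrightarrow{\gamma_2}C_3\xrightarrow{\gamma_3}\cdots\xrightarrow{\gamma_n}\Sigma A_2$, I observe that $\left[\begin{smallmatrix}0 & 1\end{smallmatrix}\right]\circ\left[\begin{smallmatrix}-\alpha_2\\ \varphi_2\end{smallmatrix}\right]=\varphi_2=\varphi_2\circ 1_{A_2}$, so the pair $\bigl(1_{A_2},\left[\begin{smallmatrix}0 & 1\end{smallmatrix}\right]\bigr)$ is a commutative square from $M_\bullet$ to $C_\bullet$. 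Applying (N4) to it produces a morphism $(1_{A_2},\left[\begin{smallmatrix}0 & 1\end{smallmatrix}\right],\nu_3,\dots,\nu_n)\colon M_\bullet\to C_\bullet$ with mapping cone an $n$-angle, and Lemma~\ref{lem:hmtpy} applied once more shows that the reduced mapping cone $R_\bullet$, with $R_1=A_3\oplus B_2$, $R_k=M_{k+1}\oplus C_k$ for $2\le k\le n-1$ and $R_n=C_n$, is an $n$-angle.

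The summand $B_2$ occurs in $R_\bullet$ only in the first two terms and (as $\Sigma B_2$) in the last term. A single change of basis on $R_2$ makes every block between the $B_2$-part and the rest vanish --- for $n\ge 4$ this uses $\beta_3\circ\beta_2=0$ and $\Sigma\varphi_2\circ\gamma_n=0$ (consecutive morphisms in $B_\bullet$, resp.\ in a rotation of $C_\bullet$) together with $\gamma_2=\nu_3|_{B_3}\circ\beta_2$ (commutativity of the second square of $M_\bullet\to C_\bullet$) --- so that $R_\bullet\cong N_\bullet\oplus\bigl(B_2\xrightarrow{1}B_2\to 0\to\cdots\to 0\to\Sigma B_2\bigr)$, where $N_\bullet$ is an $n$-$\Sigma$-sequence with exactly the objects appearing in~(2). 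In $N_\bullet$ the maps $\varphi_3,\dots,\varphi_n$ are those found above, the $2n-5$ matrix entries of $\nu_3,\dots,\nu_n$ (two for each of $\nu_3,\dots,\nu_{n-1}$, one for $\nu_n$) serve --- up to sign --- as $\psi_1,\dots,\psi_{2n-5}$, and the identity $\gamma_n\circ\psi_{2n-5}=\Sigma\alpha_1\circ\beta_n$ is just the commutativity of the last square of $M_\bullet\to C_\bullet$. A last isomorphism rescaling suitable summands of the $R_k$ by $-1$ matches the signs in the statement. Since the trivial $n$-$\Sigma$-sequence $B_2\xrightarrow{1}B_2\to\cdots\to\Sigma B_2$ lies in $\nang$ by (N1)(b), since $R_\bullet\in\nang$, and since $\nang$ is closed under isomorphisms and direct summands by (N1)(a), the $n$-$\Sigma$-sequence in~(2) lies in $\nang$. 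This verifies (N4*).

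The conceptual skeleton --- two uses of (N4) plus Lemma~\ref{lem:hmtpy}, then splitting off a contractible summand --- is short. The real labour, which I expect to be the main obstacle, is to propagate all the $2\times2$ and $3\times3$ matrices through both reduced mapping cones and through the splitting, and to check that they acquire precisely the signs $(-1)^{n+1}\varphi_{n-1}$, $(-1)^n\varphi_n$, $-\beta_j$, $-\alpha_j$, etc.\ displayed in (N4*); choosing the final rescaling isomorphism consistently across all terms is the delicate point. One should also check the small cases $n=3$ and $n=4$ by hand, where several of the objects $A_i,B_i,C_i$ collapse to $0$ and the matrices degenerate to the special forms recorded after the statement of (N4*).
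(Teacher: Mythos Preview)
Your proposal is correct and follows essentially the same route as the paper: apply (N4) to the top two rows, invoke Lemma~\ref{lem:hmtpy} to obtain the reduced cone $M_\bullet$, apply (N4) again to the square $(1_{A_2},[\,0\ 1\,])\colon M_\bullet\to C_\bullet$, and then split off contractible summands to extract the $n$-angle in~(2). The only organisational difference is that you invoke Lemma~\ref{lem:hmtpy} a second time (removing the $A_2$-summand) before splitting off the trivial $B_2$-sequence by hand, whereas the paper skips the second use of the lemma and instead writes down one explicit direct-sum diagram that peels off the $A_2$- and $B_2$-summands simultaneously; the sign rescalings you anticipate are exactly the $\pm1$ entries appearing in the vertical maps of that diagram.
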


\begin{proof}
  Suppose we are given a commutative diagram
  \begin{center}
    \begin{tikzpicture}
      \node (A1oppe) at (0,1.5){$A_1$};
      \node (A2) at (2,1.5){$A_2$};
      \node (A3) at (4,1.5){$A_3$};
      \node (Adots) at (6,1.5){$\cdots$};
      \node (An-1) at (8,1.5){$A_{n - 1}$};
      \node (An) at (10,1.5){$A_n$};
      \node (SA1oppe) at (12,1.5){$\Sigma A_1$};

      \node (A1nede) at (0,0){$A_1$};
      \node (B2) at (2,0){$B_2$};
      \node (B3) at (4,0){$B_3$};
      \node (Bdots) at (6,0){$\cdots$};
      \node (Bn-1) at (8,0){$B_{n - 1}$};
      \node (Bn) at (10,0){$B_n$};
      \node (SA1nede) at (12,0){$\Sigma A_1$};

      \begin{scope}[font=\scriptsize,->,midway]
        \draw[-,double equal sign distance] (A1oppe) -- (A1nede);
        \draw (A2) -- node[right]{$\varphi_2$} (B2);
        \draw[-,double equal sign distance] (SA1oppe) -- (SA1nede);
        
        \draw (A1oppe) -- node[above]{$\alpha_1$} (A2);
        \draw (A2) -- node[above]{$\alpha_2$} (A3);
        \draw (A3) -- node[above]{$\alpha_3$} (Adots);
        \draw (Adots) -- node[above]{$\alpha_{n - 2}$} (An-1);
        \draw (An-1) -- node[above]{$\alpha_{n - 1}$} (An);
        \draw (An) -- node[above]{$\alpha_n$} (SA1oppe);
        \draw (A1nede) -- node[above]{$\beta_1$} (B2);
        \draw (B2) -- node[above]{$\beta_2$} (B3);
        \draw (B3) -- node[above]{$\beta_3$} (Bdots);
        \draw (Bdots) -- node[above]{$\beta_{n - 2}$} (Bn-1);
        \draw (Bn-1) -- node[above]{$\beta_{n - 1}$} (Bn);
        \draw (Bn) -- node[above]{$\beta_n$} (SA1nede);
      \end{scope}
    \end{tikzpicture}
  \end{center}
  where the two rows are $n$-angles, and where the map $\varphi_1$ is
  an isomorphism. Furthermore, let
  \begin{equation*}
    A_2 \xrightarrow{\varphi_2} B_2 \xrightarrow{\gamma_2} C_3 
    \xrightarrow{\gamma_3} \cdots \xrightarrow{\gamma_{n-1}} C_n
    \xrightarrow{\gamma_n} \Sigma A_2
  \end{equation*}
  be an $n$-angle.  Apply axiom (N4) and complete the given diagram to
  a morphism $(1,\varphi_2,\varphi_3,\ldots,\varphi_n)$ of
  $n$-angles, in such a way that the mapping cone is an $n$-angle.
  Then the first part of axiom (N4*) is already satisfied.

  By Lemma \ref{lem:hmtpy}, the $n$-$\Sigma$-sequence
  \begin{equation*}
    A_2 \xrightarrow{\left[
        \begin{smallmatrix}
          -\alpha_2\\
          \hfill \varphi_2
        \end{smallmatrix}
      \right]} A_3 \oplus B_2 \xrightarrow{\left[
        \begin{smallmatrix}
          \alpha_3 & 0\\
          \varphi_3 & \beta_2
        \end{smallmatrix}
      \right]} A_4 \oplus B_3 \xrightarrow{\left[
        \begin{smallmatrix}
          \hfill \alpha_4 & 0\\
          -\varphi_4 & \beta_3
        \end{smallmatrix}
      \right]} \dots
    \xrightarrow{\left[
        \begin{smallmatrix}
          \alpha_{n - 1} & 0\\
          (-1)^n \varphi_{n - 1} & \beta_{n - 2}
        \end{smallmatrix}
      \right]} A_n \oplus B_{n - 1} \xrightarrow{\left[
        \begin{smallmatrix}
          (-1)^{n + 1} \varphi_n & \beta_{n - 1}
        \end{smallmatrix}
      \right]} B_n \xrightarrow{\Sigma \alpha_1 \circ \beta_n} \Sigma A_2
  \end{equation*}
  is an $n$-angle.  Then by axiom (N4) again, there exist morphisms
  $\psi_3,\psi_4,\ldots,\psi_{2n - 5}$ such that the mapping cone of
  the morphism
  \begin{center}
    \begin{tikzpicture}
      \node (A2oppe) at (0,1.5){$A_2$};
      \node (A3B2) at (2,1.5){$A_3 \oplus B_2$};
      \node (A4B3) at (4.5,1.5){$A_4 \oplus B_3$};
      \node (Adots) at (6.5,1.5){$\cdots$};
      \node (AnBn-1) at (9.5,1.5){$A_n \oplus B_{n - 1}$};
      \node (Bn) at (12.5,1.5){$B_n$};
      \node (SA2oppe) at (15,1.5){$\Sigma A_2$};

      \node (A2nede) at (0,0){$A_2$};
      \node (B2) at (2,0){$B_2$};
      \node (C3) at (4.5,0){$C_3$};
      \node (Cdots) at (6.5,0){$\cdots$};
      \node (Cn-1) at (9.5,0){$C_{n - 1}$};
      \node (Cn) at (12.5,0){$C_n$};
      \node (SA2nede) at (15,0){$\Sigma A_2$};

      \begin{scope}[font=\scriptsize,->,midway,right]
        \draw[-,double equal sign distance] (A2oppe) -- (A2nede);
        \draw (A3B2) -- node{$\left[
            \begin{smallmatrix}
              0 & 1
            \end{smallmatrix}
          \right]$} (B2);
        \draw (A4B3) -- node{$\left[
            \begin{smallmatrix}
              \psi_2 & \psi_1
            \end{smallmatrix}
          \right]$} (C3);
        \draw (AnBn-1) -- node{$\left[
            \begin{smallmatrix}
              \psi_{2n - 6} & \psi_{2n - 7}
            \end{smallmatrix}
          \right]$} (Cn-1);
        \draw (Bn) -- node{$\psi_{2n - 5}$} (Cn);
        \draw[-,double equal sign distance] (SA2oppe) -- (SA2nede);
      \end{scope}

      \begin{scope}[font=\scriptsize,->,midway,above]
        \draw (A2oppe) -- node{$\left[
            \begin{smallmatrix}
              -\alpha_2\\
              \hfill \varphi_2
            \end{smallmatrix}
          \right]$} (A3B2);
        \draw (A3B2) -- node{$\left[
            \begin{smallmatrix}
              \alpha_3 & 0\\
              \varphi_3 & \beta_2
            \end{smallmatrix}
          \right]$} (A4B3);
        \draw (A4B3) -- node{$\left[
            \begin{smallmatrix}
              \hfill \alpha_4 & 0\\
              -\varphi_4 & \beta_3
            \end{smallmatrix}
          \right]$} (Adots);
        \draw (Adots) -- node{$\left[
            \begin{smallmatrix}
              \alpha_{n - 1} & 0\\
              (-1)^n \varphi_{n - 1} & \beta_{n - 2}
            \end{smallmatrix}
          \right]$} (AnBn-1);
        \draw (AnBn-1) -- node{$\left[
            \begin{smallmatrix}
              (-1)^{n + 1} \varphi_n & \beta_{n - 1}
            \end{smallmatrix}
          \right]$} (Bn);
        \draw (Bn) -- node{$\Sigma \alpha_1 \circ \beta_n$} (SA2oppe);
        \draw (A2nede) -- node{$\varphi_2$} (B2);
        \draw (B2) -- node{$\gamma_2$} (C3);
        \draw (C3) -- node{$\gamma_3$} (Cdots);
        \draw (Cdots) -- node{$\gamma_{n - 2}$} (Cn-1);
        \draw (Cn-1) -- node{$\gamma_{n - 1}$} (Cn);
        \draw (Cn) -- node{$\gamma_n$} (SA2nede);
      \end{scope}
    \end{tikzpicture}
  \end{center}
  is an $n$-angle.  In other words, the $n$-$\Sigma$-sequence
  \begin{align*}
    & A_3 \oplus B_2 \oplus A_2 \xrightarrow{\left[
        \begin{smallmatrix}
          -\alpha_3 & 0 & 0\\
          -\varphi_3 & -\beta_2 & 0\\
          0 & 1 & \varphi_2
        \end{smallmatrix}
      \right]} A_4 \oplus B_3 \oplus B_2 \xrightarrow{\mu_1} 
    A_5 \oplus B_4 \oplus C_3 \xrightarrow{\mu_2} \cdots\\
    &  \xrightarrow{\mu_{n-4}} A_n \oplus B_{n - 1} \oplus C_{n - 2}
    \xrightarrow{\left[
        \begin{smallmatrix}
          (-1)^n \varphi_n & -\beta_{n - 1} & 0\\
          \psi_{2n - 6} & \psi_{2n - 7} & \gamma_{n - 2}
        \end{smallmatrix}
      \right]} B_n \oplus C_{n - 1} \xrightarrow{\left[
        \begin{smallmatrix}
          -\Sigma \alpha_1 \circ \beta_n & 0\\
          \psi_{2n - 5} & \gamma_{n - 1}
        \end{smallmatrix}
      \right]}
    \Sigma A_2 \oplus C_n \xrightarrow{\left[
        \begin{smallmatrix}
          \hfill \Sigma \alpha_2 & 0\\
          -\Sigma \varphi_2 & 0\\
          1 & \gamma_n
        \end{smallmatrix}
      \right]} \Sigma A_3 \oplus \Sigma B_2 \oplus \Sigma A_2
  \end{align*}
  is an $n$-angle, where $\mu_i$ is the matrix
  \begin{equation*}
    \mu_i = \left[
      \begin{smallmatrix}
        - \alpha_{i+3} & 0 & 0 \\
        (-1)^{i+1} \varphi_{i+3} & - \beta_{i+2} & 0 \\
        \psi_{2i} & \psi_{2i - 1} & \gamma_{i+1}
      \end{smallmatrix}
    \right].
  \end{equation*}
  This $n$-angle is the middle $n$-$\Sigma$-sequence in the
  direct sum diagram
  \begin{center}
    \begin{tikzpicture}
      \begin{scope}
        \node (A3oppe) at (0,4.5){$A_3$};
        \node (A4B3oppe) at (3.5,4.5){$A_4 \oplus B_3$};
        \node (A5B4C3oppe) at (7,4.5){$A_5 \oplus B_4 \oplus C_3$};
        \node (A6B5C4oppe) at (10.5,4.5){$A_6 \oplus B_5 \oplus C_4$};
        \node (dotsoppe) at (13.5,4.5){$\cdots$};
        
        \node (A3B2A2) at (0,2.25){$A_3 \oplus B_2 \oplus A_2$};
        \node (A4B3B2) at (3.5,2.25){$A_4 \oplus B_3 \oplus B_2$};
        \node (A5B4C3) at (7,2.25){$A_5 \oplus B_4 \oplus C_3$};
        \node (A6B5C4) at (10.5,2.25){$A_6 \oplus B_5 \oplus C_4$};
        \node (dots) at (13.5,2.25){$\cdots$};
        
        \node (A3nede) at (0,0){$A_3$};
        \node (A4B3nede) at (3.5,0){$A_4 \oplus B_3$};
        \node (A5B4C3nede) at (7,0){$A_5 \oplus B_4 \oplus C_3$};
        \node (A6B5C4nede) at (10.5,0){$A_6 \oplus B_5 \oplus C_4$};
        \node (dotsnede) at (13.5,0){$\cdots$};
        
        \begin{scope}[font=\scriptsize,->,midway,right]
          \draw (A3oppe) -- node{$\left[
              \begin{smallmatrix}
                -1\\
                \hfill 0\\
                \hfill 0
              \end{smallmatrix}
            \right]$} (A3B2A2);
          \draw (A4B3oppe) -- node{$\left[
              \begin{smallmatrix}
                1 & 0\\
                0 & 1\\
                0 & 0
              \end{smallmatrix}
            \right]$} (A4B3B2);
          \draw[-,double equal sign distance] (A5B4C3oppe) --
            (A5B4C3);
          \draw[-,double equal sign distance] (A6B5C4oppe) --
            (A6B5C4);
          \draw (A3B2A2) -- node{$\left[
              \begin{smallmatrix}
                -1 & 0 & \alpha_2
              \end{smallmatrix}
            \right]$} (A3nede);
          \draw (A4B3B2) -- node{$\left[
              \begin{smallmatrix}
                1 & 0 & 0\\
                0 & 1 & \beta_2
              \end{smallmatrix}
            \right]$} (A4B3nede);
          \draw[-,double equal sign distance] (A5B4C3) --
            (A5B4C3nede);
          \draw[-,double equal sign distance] (A6B5C4) --
            (A6B5C4nede);
        \end{scope}
        
        \begin{scope}[font=\scriptsize,->,midway,above]
          \draw (A3oppe) -- node{$\left[
              \begin{smallmatrix}
                \alpha_3\\
                \varphi_3
              \end{smallmatrix}
            \right]$} (A4B3oppe);
          \draw (A4B3oppe) -- node{$\left[
              \begin{smallmatrix}
                -\alpha_4 & 0\\
                \hfill \varphi_4 & -\beta_3\\
                \hfill \psi_2 & \hfill \psi_1
              \end{smallmatrix}
            \right]$} (A5B4C3oppe);
          \draw (A5B4C3oppe) -- node{$\mu_2$} (A6B5C4oppe);
          \draw (A6B5C4oppe) -- node{$\mu_3$} (dotsoppe);
          
          \draw (A3B2A2) -- node{$\left[
              \begin{smallmatrix}
                -\alpha_3 & 0 & 0\\
                -\varphi_3 & -\beta_2 & 0\\
                0 & 1 & \varphi_2
              \end{smallmatrix}
            \right]$} (A4B3B2);
          \draw (A4B3B2) -- node{$\mu_1$} (A5B4C3);
          \draw (A5B4C3) -- node{$\mu_2$} (A6B5C4);
          \draw (A6B5C4) -- node{$\mu_3$} (dots);
          
          \draw (A3nede) -- node{$\left[
              \begin{smallmatrix}
                \alpha_3\\
                \varphi_3
              \end{smallmatrix}
            \right]$} (A4B3nede);
          \draw (A4B3nede) -- node{$\left[
              \begin{smallmatrix}
                -\alpha_4 & 0\\
                \hfill \varphi_4 & -\beta_3\\
                \hfill \psi_2 & \hfill \psi_1
              \end{smallmatrix}
            \right]$} (A5B4C3nede);
          \draw (A5B4C3nede) -- node{$\mu_2$} (A6B5C4nede);
          \draw (A6B5C4nede) -- node{$\mu_3$} (dotsnede);
        \end{scope}
      \end{scope}

      \begin{scope}[xshift=0.25cm,yshift=-6cm]
        \node (dots2oppe) at (-0.5,4){};
        \node (AnBn-1Cn-2oppe) at (2,4){$A_n \oplus B_{n - 1} \oplus
          C_{n - 2}$};
        \node (BnCn-1oppe) at (6.5,4){$B_n \oplus C_{n - 1}$};

        \node (Cnoppe) at (9.5,4){$C_n$};
        \node (SA3oppe) at (13,4){$\Sigma A_3$};
        
        \node (Omega) at (8,3){$\Omega$};

        \node (dots2) at (-0.5,2){};
        \node (AnBn-1Cn-2) at (2,2){$A_n \oplus B_{n - 1} \oplus
          C_{n - 2}$};
        \node (BnCn-1) at (6.5,2){$B_n \oplus C_{n - 1}$};
        
        \node (SA2Cn) at (9.5,2){$\Sigma A_2 \oplus C_n$};
        \node (SA3SB2SA2) at (13,2){$\Sigma A_3 \oplus \Sigma B_2
          \oplus \Sigma A_2$};

        \node (dots2nede) at (-0.5,0){};
        \node (AnBn-1Cn-2nede) at (2,0){$A_n \oplus B_{n - 1} \oplus
          C_{n - 2}$};
        \node (BnCn-1nede) at (6.5,0){$B_n \oplus C_{n - 1}$};

        \node (Cnnede) at (9.5,0){$C_n$};
        \node (SA3nede) at (13,0){$\Sigma A_3$};

        \begin{scope}[font=\scriptsize,->,midway,right]
          \draw[-,double equal sign distance] (AnBn-1Cn-2oppe) --
            (AnBn-1Cn-2);
          \draw[-,double equal sign distance] (BnCn-1oppe) --
            (BnCn-1);

          \draw[-,double equal sign distance] (AnBn-1Cn-2) --
            (AnBn-1Cn-2nede);
          \draw[-,double equal sign distance] (BnCn-1) --
            (BnCn-1nede);
        \end{scope}

        \begin{scope}[font=\scriptsize,->,midway,above]
          \draw (dots2oppe) -- node{$\mu_{n-4}$} (AnBn-1Cn-2oppe);
          \draw (AnBn-1Cn-2oppe) -- node{$\left[
              \begin{smallmatrix}
                (-1)^n \varphi_n & -\beta_{n - 1} & 0\\
                \psi_{2n - 6} & \psi_{2n - 7} & \gamma_{n - 2}
              \end{smallmatrix}
            \right]$}(BnCn-1oppe);
          \draw (BnCn-1oppe) -- node{$\left[
              \begin{smallmatrix}
                \psi_{2n - 5} & \gamma_{n - 1}
              \end{smallmatrix}
            \right]$} (Cnoppe);
          \draw (Cnoppe) -- node{$\Sigma \alpha_2 \circ \gamma_n$}
            (SA3oppe);

          \draw (dots2) -- node{$\mu_{n-4}$} (AnBn-1Cn-2);
          \draw (AnBn-1Cn-2) -- node{$\left[
              \begin{smallmatrix}
                (-1)^n \varphi_n & -\beta_{n - 1} & 0\\
                \psi_{2n - 6} & \psi_{2n - 7} & \gamma_{n - 2}
              \end{smallmatrix}
            \right]$}(BnCn-1);
          \draw (BnCn-1) -- node{$\left[
              \begin{smallmatrix}
                -\Sigma \alpha_1 \circ \beta_n & 0\\
                \psi_{2n - 5} & \gamma_{n - 1}
              \end{smallmatrix}
            \right]$}(SA2Cn);
          \draw (SA2Cn) -- node{$\left[
              \begin{smallmatrix}
                \hfill \Sigma \alpha_2 & 0\\
                -\Sigma \varphi_2 & 0\\
                1 & \gamma_n
              \end{smallmatrix}
            \right]$} (SA3SB2SA2);

          \draw (dots2nede) -- node{$\mu_{n - 4}$} (AnBn-1Cn-2nede);
          \draw (AnBn-1Cn-2nede) -- node{$\left[
              \begin{smallmatrix}
                (-1)^n \varphi_n & -\beta_{n - 1} & 0\\
                \psi_{2n - 6} & \psi_{2n - 7} & \gamma_{n - 2}
              \end{smallmatrix}
            \right]$}(BnCn-1nede);
          \draw (BnCn-1nede) -- node{$\left[
              \begin{smallmatrix}
                \psi_{2n - 5} & \gamma_{n - 1}
              \end{smallmatrix}
            \right]$} (Cnnede);
          \draw (Cnnede) -- node{$\Sigma \alpha_2 \circ \gamma_n$}
            (SA3nede);
        \end{scope}
      \end{scope}

      \begin{scope}[xshift=0.25cm,yshift=-6cm]
        \begin{scope}[font=\scriptsize,->,midway,right]
          \draw (Cnoppe) -- node{$\left[
              \begin{smallmatrix}
                -\gamma_n\\
                1
              \end{smallmatrix}
            \right]$} (SA2Cn);
          \draw (SA3oppe) -- node{$\left[
              \begin{smallmatrix}
                -1\\
                \hfill 0\\
                \hfill 0
              \end{smallmatrix}
            \right]$} (SA3SB2SA2);

          \draw (SA2Cn) -- node{$\left[
              \begin{smallmatrix}
                0 & 1
              \end{smallmatrix}
            \right]$} (Cnnede);
          \draw (SA3SB2SA2) -- node{$\left[
              \begin{smallmatrix}
                -1 & 0 & \Sigma \alpha_2
              \end{smallmatrix}
            \right]$} (SA3nede);
        \end{scope}
      \end{scope}
     \end{tikzpicture}
  \end{center}
  Consequently, by (N1)(a), the top (bottom) $n$-$\Sigma$-sequence is
  an $n$-angle.  Moreover, the commutativity of the square $\Omega$
  implies that $\gamma_n \circ \psi_{2n-5} = \Sigma \alpha_1 \circ
  \beta_n$.  This shows that the second part of axiom (N4*) is
  satisfied.
\end{proof}

We now prove the converse to Theorem \ref{thm:octa}, namely that the
octahedral axiom (N4*) implies axiom (N4).

\begin{theorem}
  \label{thm:opposite}
  If $\nang$ is a collection of $n$-$\Sigma$-sequences in $\C$
  satisfying axioms \emph{(N1), (N2), (N3)} and \emph{(N4*)}, then it
  also satisfies \emph{(N4)}.
\end{theorem}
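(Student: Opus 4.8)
The goal is: given a commutative square $(\varphi_1,\varphi_2)$ between $n$-angles $A_\bullet$ and $B_\bullet$ as in (N3), complete it to a morphism of $n$-$\Sigma$-sequences whose mapping cone lies in $\nang$. By Theorem \ref{thm:replaceN1andN2} I may also use axioms (N1*) and (N2*). I would first dispose of the case $\varphi_1 = 1_{A_1}$ (so $B_1 = A_1$), and afterwards reduce the general case to it. In the case $\varphi_1 = 1_{A_1}$: use (N1)(c) to choose an $n$-angle $C_\bullet \colon A_2 \xrightarrow{\varphi_2} B_2 \to C_3 \to \cdots \to \Sigma A_2$ and apply (N4*) to the diagram whose two rows are $A_\bullet$ and $B_\bullet$ and whose second column is $C_\bullet$. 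Part (1) of (N4*) provides $\varphi_3,\ldots,\varphi_n$ making $\varphi_\bullet \ceq (1,\varphi_2,\ldots,\varphi_n)$ a morphism of $n$-angles; this is already the completion demanded by (N4), so everything comes down to showing that its mapping cone $\operatorname{Cone}(\varphi_\bullet)$ belongs to $\nang$.

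For the mapping cone I would observe that the direct sum diagram in the proof of Lemma \ref{lem:hmtpy} is in fact an isomorphism of $n$-$\Sigma$-sequences, identifying $\operatorname{Cone}(\varphi_\bullet)$ with $F_\bullet$ plus a trivial $n$-angle, where $F_\bullet$ is the folded sequence $A_2 \to A_3\oplus B_2 \to \cdots \to A_n\oplus B_{n-1} \to B_n \xrightarrow{\Sigma\alpha_1\circ\beta_n} \Sigma A_2$. By (N1)(a) it therefore suffices to prove $F_\bullet \in \nang$. Likewise, the direct sum diagram in the proof of Theorem \ref{thm:octa} identifies $\operatorname{Cone}(h)$ with the big $n$-$\Sigma$-sequence of part (2) of (N4*) plus a trivial $n$-angle, where $h = \left(1, \left[\begin{smallmatrix}0 & 1\end{smallmatrix}\right], \left[\begin{smallmatrix}\psi_2 & \psi_1\end{smallmatrix}\right], \ldots, \psi_{2n-5}\right) \colon F_\bullet \to C_\bullet$ is a bona fide morphism of $n$-$\Sigma$-sequences: the big sequence is an $n$-angle, hence exact, hence a complex, which forces the intermediate squares to commute, while the last one commutes by the relation $\gamma_n\circ\psi_{2n-5} = \Sigma\alpha_1\circ\beta_n$ supplied by part (2). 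Since that big sequence lies in $\nang$, (N1)(a) gives $\operatorname{Cone}(h) \in \nang$.

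Everything is now reduced to the following implication, which I expect to be the main difficulty: if $h \colon F_\bullet \to C_\bullet$ is a morphism of $n$-$\Sigma$-sequences with $h_1 = 1_{A_2}$, with $C_\bullet$ and $\operatorname{Cone}(h)$ both in $\nang$, and with $F_\bullet$ exact (which it is, being a direct summand of $\operatorname{Cone}(\varphi_\bullet)$, itself the mapping cone of a morphism of exact $n$-angles), then $F_\bullet \in \nang$. This reverses the second application of (N4) in the proof of Theorem \ref{thm:octa}, and the reversal is not formal. My plan is to exploit the rotation structure of the mapping cone: there is a canonical morphism $C_\bullet \to \operatorname{Cone}(h)$ which is objectwise a split monomorphism, and whose own mapping cone is isomorphic, as an $n$-$\Sigma$-sequence, to a left rotation of $F_\bullet$ together with trivial $n$-angles. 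Using (N1)(c) to produce an $n$-angle with the same first morphism as $F_\bullet$, (N3) to compare it with $C_\bullet$ and $\operatorname{Cone}(h)$, a further application of (N4*) — legitimate precisely because the relevant comparison morphism can be arranged to have identity first component — and (N1*)(a) together with the exactness of $F_\bullet$, one should conclude $F_\bullet \in \nang$; then Lemma \ref{lem:hmtpy} gives $\operatorname{Cone}(\varphi_\bullet) \in \nang$, finishing the case $\varphi_1 = 1_{A_1}$. The hard part is this ``division by $C_\bullet$'', i.e. recovering $F_\bullet$ from $\operatorname{Cone}(h)$ and $C_\bullet$ using only the available axioms.

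Finally, for an arbitrary $\varphi_1 \colon A_1 \to B_1$, I would use (N1)(c) to build the $n$-angle cones $D_\bullet$ of $\varphi_1$, $H_\bullet$ of $\varphi_2$, and $G_\bullet$ of the common composite $\varphi_2\circ\alpha_1 = \beta_1\circ\varphi_1$. Applying the already-settled case $\varphi_1 = 1$ to the square $(1,\varphi_2)$ between $A_\bullet$ and $G_\bullet$ (with second column $H_\bullet$), and to the square $(1,\beta_1)$ between $D_\bullet$ and $G_\bullet$ (with second column $B_\bullet$), yields morphisms with mapping cones in $\nang$. Gluing these two octahedra along their common $n$-angle $G_\bullet$, in the fashion of the classical octahedral diagram, and passing the result through the same direct sum manipulations as above, assembles a completion $\varphi_\bullet \colon A_\bullet \to B_\bullet$ of the original square whose mapping cone is built out of the two good cones and trivial $n$-angles, hence belongs to $\nang$ by (N1)(a). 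This proves (N4).
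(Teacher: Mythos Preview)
Your approach tries to reverse the proof of Theorem~\ref{thm:octa} step by step, and this runs into a genuine obstruction at the ``division by $C_\bullet$'' step, which you correctly flag as the hard part but do not actually carry out. There is also an earlier problem: you assert that $h = \bigl(1,\left[\begin{smallmatrix}0&1\end{smallmatrix}\right],\left[\begin{smallmatrix}\psi_2&\psi_1\end{smallmatrix}\right],\ldots,\psi_{2n-5}\bigr)$ is a morphism $F_\bullet\to C_\bullet$ because the big sequence of (N4*)(2) is a complex. But the complex condition only controls compositions of \emph{consecutive} maps in that sequence, and the relation $\psi_1\circ\beta_2 = \gamma_2$ (needed for the square at position $2$ of $h$) involves $\beta_2$ and $\gamma_2$, neither of which appears in the (N4*)(2) sequence at all. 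Without this relation the direct-sum identification $\operatorname{Cone}(h)\cong E_\bullet\oplus(\text{trivial})$ fails to be an isomorphism of $n$-$\Sigma$-sequences (check the square over $\mu_1$ in the bottom half of the diagram in the proof of Theorem~\ref{thm:octa}). So your argument breaks down before you even reach the division step.

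The paper avoids both difficulties by a single trick: rather than applying (N4*) to the natural triple $(A_\bullet,B_\bullet,C_\bullet)$, it manufactures three auxiliary $n$-angles $Y_\bullet$, $Z_\bullet$, $X_\bullet$, all starting at $A_1\oplus B_1$, where $Y_\bullet$ and $Z_\bullet$ are $A_\bullet$ and $B_\bullet$ padded with trivial summands and $X_\bullet$ --- the ``second column'' input to (N4*) --- is a direct sum of trivial $n$-angles with \emph{zero objects in positions $3$ through $n-1$}. Because $X_\bullet$ is zero in the middle, the morphisms $\psi_1,\ldots,\psi_{2n-6}$ produced by (N4*) are forced to vanish, so the (N4*)(2) sequence collapses to exactly (a rotation of) the mapping cone of the completed morphism $A_\bullet\to B_\bullet$. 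No division step is required, and the general $\varphi_1$ is handled in one application of (N4*) rather than by a separate reduction. The moral is that the freedom in choosing the third $n$-angle for (N4*) should be spent making it as degenerate as possible, not on the canonical cone of $\varphi_2$.
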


\begin{proof}
  Given a commutative diagram
  \begin{center}
    \begin{tikzpicture}
      \node (A1) at (0,1.5){$A_1$};
      \node (A2) at (1.5,1.5){$A_2$};
      \node (A3) at (3,1.5){$A_3$};
      \node (Adots) at (4.5,1.5){$\cdots$};
      \node (An) at (6,1.5){$A_n$};
      \node (SA1) at (7.5,1.5){$\Sigma A_1$};

      \node (B1) at (0,0){$B_1$};
      \node (B2) at (1.5,0){$B_2$};
      \node (B3) at (3,0){$B_3$};
      \node (Bdots) at (4.5,0){$\cdots$};
      \node (Bn) at (6,0){$B_n$};
      \node (SB1) at (7.5,0){$\Sigma B_1$};

      \begin{scope}[font=\scriptsize,->,midway]
        \begin{scope}[above]
          \draw (A1) -- node{$\alpha_1$} (A2);
          \draw (A2) -- node{$\alpha_2$} (A3);
          \draw (A3) -- node{$\alpha_3$} (Adots);
          \draw (Adots) -- node{$\alpha_{n - 1}$} (An);
          \draw (An) -- node{$\alpha_n$} (SA1);
          \draw (B1) -- node{$\beta_1$} (B2);
          \draw (B2) -- node{$\beta_2$} (B3);
          \draw (B3) -- node{$\beta_3$} (Bdots);
          \draw (Bdots) -- node{$\beta_{n - 1}$} (Bn);
          \draw (Bn) -- node{$\beta_n$} (SB1);
        \end{scope}

        \begin{scope}[right]
          \draw (A1) -- node{$\varphi_1$} (B1);
          \draw (A2) -- node{$\varphi_2$} (B2);
          \draw (SA1) -- node{$\Sigma \varphi_1$} (SB1);
        \end{scope}
      \end{scope}
    \end{tikzpicture}
  \end{center}
  where the two rows are $n$-angles: we denote these by $A_\bullet$
  and $B_\bullet$.  We want to prove that we can complete the above
  diagram to a morphism of $n$-angles in such a way that the mapping
  cone of that morphism is again an $n$-angle.

  From the given diagram we build the diagram
  \begin{center}
    \begin{tikzpicture}
      \node (A1B1oppe) at (0,9){$A_1\oplus B_1$};
      \node (B2A2B1) at (3,9){$B_2\oplus A_2\oplus B_1$};
      \node (A3B2) at (5.75,9){$A_3\oplus B_2$};
      \node (A4) at (7.75,9){$A_4$};
      \node (Adotsoppe) at (9.25,9){$\cdots$};
      
      \node (Adotsnede) at (4,6){$\cdots$};
      \node (An-1) at (5.5,6){$A_{n - 1}$};
      \node (An) at (7.75,6){$A_n$};
      \node (SA1SB1oppe) at (12,6){$\Sigma A_1\oplus \Sigma B_1$};
      
      \node (A1B1nede) at (0,7.5){$A_1\oplus B_1$};
      \node (B2) at (3,7.5){$B_2$};
      \node (B3) at (5.75,7.5){$B_3$};
      \node (B4) at (7.75,7.5){$B_4$};
      \node (Bdotsoppe) at (9.25,7.5){$\cdots$};
      
      \node (Bdotsnede) at (4,4.5){$\cdots$};
      \node (Bn-1) at (5.5,4.5){$B_{n - 1}$};
      \node (SA1Bn) at (7.75,4.5){$\Sigma A_1\oplus B_n$};
      \node (SA1SB1nede) at (12,4.5){$\Sigma A_1\oplus \Sigma B_1$};
      
      \node (03) at (3,6){$0$};
      \node (dots) at (3,4.5){$\vdots$};
      \node (0n-2) at (3,3){$0$};
      \node (SA2SB1) at (3,1.5){$\Sigma A_2\oplus \Sigma B_1$};
      \node (SB2SA2SB1) at (3,0){$\Sigma B_2\oplus \Sigma
        A_2\oplus \Sigma B_1$};
      
      \begin{scope}[font=\scriptsize,->,midway]
        \begin{scope}[above]
          \draw (A1B1oppe) -- node{$\left[
              \begin{smallmatrix}
                0 & \hfill 0\\
                (-1)^n \alpha_1 & \hfill 0\\
                0 & -1
              \end{smallmatrix}
            \right]$} (B2A2B1);
          \draw (B2A2B1) -- node{$\left[
              \begin{smallmatrix}
                0 & -\alpha_2 & 0\\
                1 & 0 & 0
              \end{smallmatrix}
            \right]$} (A3B2);
          \draw (A3B2) -- node{$\left[
              \begin{smallmatrix}
                -\alpha_3 & 0
              \end{smallmatrix}
            \right]$} (A4);
          \draw (A4) -- node{$\alpha_4$} (Adotsoppe);
          \draw (Adotsnede) -- node{$\alpha_{n - 2}$} (An-1);
          \draw (An-1) -- node{$\alpha_{n - 1}$} (An);
          \draw (An) -- node{$\left[
              \begin{smallmatrix}
                (-1)^n \alpha_n\\
                0
              \end{smallmatrix}
            \right]$} (SA1SB1oppe);
          \draw (A1B1nede) -- node{$\left[
              \begin{smallmatrix}
                (-1)^{n + 1} \varphi_2 \circ \alpha_1 & \beta_1
              \end{smallmatrix}
            \right]$} (B2);
          \draw (B2) -- node{$\beta_2$} (B3);
          \draw (B3) -- node{$-\beta_3$} (B4);
          \draw (B4) -- node{$-\beta_4$} (Bdotsoppe);
          \draw (Bdotsnede) -- node{$-\beta_{n - 2}$} (Bn-1);
          \draw (Bn-1) -- node{$\left[
              \begin{smallmatrix}
                0\\
                -\beta_{n - 1}
              \end{smallmatrix}
            \right]$} (SA1Bn);
          \draw (SA1Bn) -- node{$\left[
              \begin{smallmatrix}
                -1 & 0\\
                (-1)^{n + 1} \Sigma \varphi_1 & (-1)^{n + 1} \beta_n
              \end{smallmatrix}
            \right]$} (SA1SB1nede);
        \end{scope}
        
        \begin{scope}[right]
          \draw[-,double equal sign distance] (A1B1oppe) --
            (A1B1nede);
          \draw (B2A2B1) -- node{$\left[
              \begin{smallmatrix}
                1 & -\varphi_2 & -\beta_1
              \end{smallmatrix}
            \right]$} (B2);
          \draw (B2) -- (03);
          \draw (03) -- (dots);
          \draw (dots) -- (0n-2);
          \draw (0n-2) -- (SA2SB1);
          \draw (SA2SB1) -- node{$\left[
              \begin{smallmatrix}
                (-1)^n \Sigma \varphi_2 & (-1)^n \Sigma \beta_1\\
                (-1)^n & 0\\
                0 & (-1)^n
              \end{smallmatrix}
            \right]$} (SB2SA2SB1);
          \draw[-,double equal sign distance] (SA1SB1oppe) --
            (SA1SB1nede);
        \end{scope}
      \end{scope}
      \end{tikzpicture}
  \end{center}
  in which the top left square commutes.  Let $X_\bullet$, $Y_\bullet$
  and $Z_\bullet$ denote the three $n$-$\Sigma$-sequences
  \begin{align*}
    & B_2\oplus A_2\oplus B_1 \xrightarrow{\left[
        \begin{smallmatrix}
          1 & -\varphi_2 & -\beta_1
        \end{smallmatrix}
      \right]} B_2 \to 0 \to \cdots \to 0 \to \Sigma A_2\oplus \Sigma
    B_1 \xrightarrow{\left[
        \begin{smallmatrix}
          (-1)^n \Sigma \varphi_2 & (-1)^n \Sigma \beta_1\\
          (-1)^n & 0\\
          0 & (-1)^n
        \end{smallmatrix}
      \right]} \Sigma B_2\oplus \Sigma A_2\oplus \Sigma B_1,\\
    & A_1\oplus B_1 \xrightarrow{\left[
        \begin{smallmatrix}
          0 & \hfill 0\\
          (-1)^n \alpha_1 & \hfill 0\\
          0 & -1
        \end{smallmatrix}
      \right]} B_2\oplus A_2\oplus B_1 \xrightarrow{\left[
        \begin{smallmatrix}
          0 & -\alpha_2 & 0\\
          1 & 0 & 0
        \end{smallmatrix}
      \right]} A_3\oplus B_2 \xrightarrow{\left[
        \begin{smallmatrix}
          -\alpha_3 & 0
        \end{smallmatrix}
      \right]} A_4 \xrightarrow{\alpha_4} \cdots
    \xrightarrow{\alpha_{n - 1}} A_n \xrightarrow{\left[
        \begin{smallmatrix}
          (-1)^n \alpha_n\\
          0
        \end{smallmatrix}
      \right]} \Sigma A_1\oplus \Sigma B_1,\\
    & A_1\oplus B_1 \xrightarrow{\left[
        \begin{smallmatrix}
          (-1)^{n + 1} \varphi_2 \circ \alpha_1 & \beta_1
        \end{smallmatrix}
      \right]} B_2 \xrightarrow{\beta_2} B_3 \xrightarrow{-\beta_3}
    \cdots \xrightarrow{-\beta_{n - 2}} B_{n - 1} \xrightarrow{\left[
        \begin{smallmatrix}
          0\\
          -\beta_{n - 1}
        \end{smallmatrix}
      \right]} \Sigma A_1\oplus B_n \xrightarrow{\left[
        \begin{smallmatrix}
          -1 & 0\\
          (-1)^{n + 1} \Sigma \varphi_1 & (-1)^{n + 1} \beta_n
        \end{smallmatrix}
      \right]} \Sigma A_1\oplus \Sigma B_1,
  \end{align*}
  respectively.  In order to apply Theorem \ref{thm:octa} we need to
  prove that these $n$-$\Sigma$-sequences are $n$-angles.

  It can easily be shown that $X_\bullet$ is isomorphic to the direct
  sum of the trivial $n$-angle on $B_2$ and the left rotations of the
  trivial $n$-angles on $A_2$ and $B_1$.  Next, the
  $n$-$\Sigma$-sequence $Y_\bullet$ is isomorphic to the direct sum of
  the $n$-angle $A_\bullet$, the trivial $n$-angle on $B_1$ and the
  right rotation of the trivial $n$-angle on $B_2$.  Similarly, the
  $n$-$\Sigma$-sequence $Z_\bullet$ is isomorphic to the direct sum of
  the $n$-angle $B_\bullet$ and the left rotation of the trivial
  $n$-angle on $A_1$.  Hence, by (N1)(a) it follows that $X_\bullet$,
  $Y_\bullet$ and $Z_\bullet$ are $n$-angles.

  Since $X_\bullet$, $Y_\bullet$ and $Z_\bullet$ are $n$-angles, we
  may apply axiom (N4*) to the above diagram. Consequently, there
  exist morphisms $\sigma_3,\sigma_4,\ldots,\sigma_n$ and a morphism
  $\psi_{2n - 5}$ with the following three properties:
  \begin{enumerate}
  \item the sequence $\left(1, \left[
        \begin{smallmatrix}
          1 & -\varphi_2 & -\beta_1
        \end{smallmatrix}
      \right],\sigma_3,\sigma_4,\ldots,\sigma_n\right)$ is a morphism  
    $Y_\bullet \to Z_\bullet$ of $n$-angles,
  \item $\psi_{2n - 5}$ is a morphism $\Sigma A_1 \oplus B_n \to \Sigma A_2
    \oplus \Sigma B_1$ with
    \begin{equation*}
      \left[
        \begin{smallmatrix}
          (-1)^n \Sigma \varphi_2 & (-1)^n \Sigma \beta_1\\
          (-1)^n & 0\\
          0 & (-1)^n
        \end{smallmatrix}
      \right] \circ \psi_{2n - 5} = \left[
        \begin{smallmatrix}
          0 & \hfill 0\\
          (-1)^n \Sigma \alpha_1 & \hfill 0\\
          0 & -1
        \end{smallmatrix}
      \right] \circ \left[
        \begin{smallmatrix}
          -1 & 0\\
          (-1)^{n + 1} \Sigma \varphi_1 & (-1)^{n + 1} \beta_n
        \end{smallmatrix}
      \right].
    \end{equation*}
  \item the $n$-$\Sigma$-sequence
    \begin{equation*}
      A_3 \oplus B_2 \xrightarrow{\left[
          \begin{smallmatrix}
            -\alpha_3 & 0\\
            \hfill \sigma_{3,1} & \sigma_{3,2}
          \end{smallmatrix}
        \right]} A_4 \oplus B_3 \xrightarrow{\left[
          \begin{smallmatrix}
            -\alpha_4 & 0\\
            \hfill \sigma_4 & \beta_3
          \end{smallmatrix}
        \right]} \cdots 
      \xrightarrow{\psi_{2n - 5}} \Sigma A_2 \oplus \Sigma B_1
      \xrightarrow{\left[
          \begin{smallmatrix}
            (-1)^{n + 1} \Sigma \alpha_2 & 0\\
            (-1)^n \Sigma \varphi_2 & (-1)^n \Sigma \beta_1
          \end{smallmatrix}
        \right]} \Sigma A_3 \oplus \Sigma B_2
    \end{equation*}
    is an $n$-angle.
  \end{enumerate}
  Observe that the $n$-angle $X_\bullet$ consists of the zero object
  at positions $3$ through $n - 1$, thus the other morphisms
  $\psi_1,\psi_2,\ldots,\psi_{2n - 6}$ given by (N4*) are all zero.

  From property (1) the diagram
  \begin{center}
    \begin{tikzpicture}
      \node (A1B1oppe) at (0,4.5){$A_1\oplus B_1$};
      \node (B2A2B1) at (3,4.5){$B_2\oplus A_2\oplus B_1$};
      \node (A3B2) at (5.75,4.5){$A_3\oplus B_2$};
      \node (A4) at (7.75,4.5){$A_4$};
      \node (Adotsoppe) at (9.25,4.5){$\cdots$};
      
      \node (Adotsnede) at (4,1.5){};
      \node (An-1) at (5.5,1.5){$A_{n - 1}$};
      \node (An) at (7.75,1.5){$A_n$};
      \node (SA1SB1oppe) at (12,1.5){$\Sigma A_1\oplus \Sigma B_1$};
      
      \node (A1B1nede) at (0,3){$A_1\oplus B_1$};
      \node (B2) at (3,3){$B_2$};
      \node (B3) at (5.75,3){$B_3$};
      \node (B4) at (7.75,3){$B_4$};
      \node (Bdotsoppe) at (9.25,3){$\cdots$};
      
      \node (Bdotsnede) at (4,0){};
      \node (Bn-1) at (5.5,0){$B_{n - 1}$};
      \node (SA1Bn) at (7.75,0){$\Sigma A_1\oplus B_n$};
      \node (SA1SB1nede) at (12,0){$\Sigma A_1\oplus \Sigma B_1$};
      
      \begin{scope}[font=\scriptsize,->,midway]
        \begin{scope}[above]
          \draw (A1B1oppe) -- node{$\left[
              \begin{smallmatrix}
                0 & \hfill 0\\
                (-1)^n\alpha_1 & \hfill 0\\
                0 & -1
              \end{smallmatrix}
            \right]$} (B2A2B1);
          \draw (B2A2B1) -- node{$\left[
              \begin{smallmatrix}
                0 & -\alpha_2 & 0\\
                1 & 0 & 0
              \end{smallmatrix}
            \right]$} (A3B2);
          \draw (A3B2) -- node{$\left[
              \begin{smallmatrix}
                -\alpha_3 & 0
              \end{smallmatrix}
            \right]$} (A4);
          \draw (A4) -- node{$\alpha_4$} (Adotsoppe);
          \draw (Adotsnede) -- node{$\alpha_{n - 2}$} (An-1);
          \draw (An-1) -- node{$\alpha_{n - 1}$} (An);
          \draw (An) -- node{$\left[
              \begin{smallmatrix}
                (-1)^n\alpha_n\\
                0
              \end{smallmatrix}
            \right]$} (SA1SB1oppe);
          \draw (A1B1nede) -- node{$\left[
              \begin{smallmatrix}
                (-1)^{n + 1} \varphi_2 \circ \alpha_1 & \beta_1
              \end{smallmatrix}
            \right]$} (B2);
          \draw (B2) -- node{$\beta_2$} (B3);
          \draw (B3) -- node{$-\beta_3$} (B4);
          \draw (B4) -- node{$-\beta_4$} (Bdotsoppe);
          \draw (Bdotsnede) -- node{$-\beta_{n - 2}$} (Bn-1);
          \draw (Bn-1) -- node{$\left[
              \begin{smallmatrix}
                0\\
                -\beta_{n - 1}
              \end{smallmatrix}
            \right]$} (SA1Bn);
          \draw (SA1Bn) -- node{$\left[
              \begin{smallmatrix}
                -1 & 0\\
                (-1)^{n + 1} \Sigma \varphi_1 & (-1)^{n + 1} \beta_n
              \end{smallmatrix}
            \right]$} (SA1SB1nede);
        \end{scope}
        
        \begin{scope}[right]
          \draw[-,double equal sign distance] (A1B1oppe) --
            (A1B1nede);
          \draw (B2A2B1) -- node{$\left[
              \begin{smallmatrix}
                1 & -\varphi_2 & -\beta_1
              \end{smallmatrix}
            \right]$} (B2);
          \draw (A3B2) -- node{$\sigma_3$} (B3);
          \draw (A4) -- node{$\sigma_4$} (B4);
          \draw (An-1) -- node{$\sigma_{n - 1}$} (Bn-1);
          \draw (An) -- node{$\sigma_n$} (SA1Bn);
          \draw[-,double equal sign distance] (SA1SB1oppe) --
            (SA1SB1nede);
        \end{scope}
      \end{scope}
    \end{tikzpicture}
  \end{center}
  is commutative.  Using the commuatativity, we can conclude that
  \begin{equation*}
    \begin{array}{r@{\ }c@{\ }l}
      \sigma_3 & = & \left[
        \begin{smallmatrix}
          \varphi_3 & \beta_2
        \end{smallmatrix}
      \right],\\
      \sigma_4 & = & \varphi_4,\\
      \sigma_5 & = & - \varphi_5 \\
      & \vdots &\\
      \sigma_{n - 1} & = & (-1)^{n - 1} \varphi_{n - 1},\\
      \sigma_n & = & \left[
        \begin{smallmatrix}
          (-1)^{n + 1} \alpha_n\\
          (-1)^n \varphi_n
        \end{smallmatrix}
      \right]
    \end{array}
  \end{equation*}
  for some morphisms $A_i \xrightarrow{\varphi_i} B_i$ ($3 \le i \le
  n$) making the sequence $\varphi =
  (\varphi_1,\varphi_2,\ldots,\varphi_n)$ into a morphism $A_\bullet
  \xrightarrow{\varphi} B_\bullet$ of $n$-angles.

  Next, consider the morphism $\psi_{2n - 5}$. Using property (2), we
  see that
  \begin{align*}
    \left[
      \begin{smallmatrix}
        (-1)^n \Sigma \varphi_2 & (-1)^n \Sigma \beta_1\\
        (-1)^n & 0\\
        0 & (-1)^n
      \end{smallmatrix}
    \right] \circ \psi_{2n - 5} &= \left[
      \begin{smallmatrix}
        0 & \hfill 0\\
        (-1)^n \Sigma \alpha_1 & \hfill 0\\
        0 & -1
      \end{smallmatrix}
    \right] \circ \left[
      \begin{smallmatrix}
        -1 & 0\\
        (-1)^{n + 1} \Sigma \varphi_1 & (-1)^{n + 1} \beta_n
      \end{smallmatrix}
    \right]\\
    &= \left[
      \begin{smallmatrix}
        0 & 0\\
        (-1)^{n + 1} \Sigma \alpha_1 & 0\\
        (-1)^n \Sigma \varphi_1 & (-1)^n \beta_n
      \end{smallmatrix}
      \right].
  \end{align*}
  Thus the morphism $\psi_{2n - 5}$ is given by the matrix
  \begin{equation*}
    \psi_{2n - 5} = \left[
      \begin{smallmatrix}
        -\Sigma \alpha_1 & 0\\
        \hfill \Sigma \varphi_1 & \beta_n
      \end{smallmatrix}
    \right].
  \end{equation*}
  Finally, from property (3) and what we have shown so far, the
  $n$-$\Sigma$-sequence
  \begin{equation*}
    A_3 \oplus B_2 \xrightarrow{\left[
        \begin{smallmatrix}
          -\alpha_3 & 0\\
          \hfill \varphi_3 & \beta_2
        \end{smallmatrix}
      \right]} A_4 \oplus B_3 \xrightarrow{\left[
        \begin{smallmatrix}
          -\alpha_4 & 0\\
          \hfill \varphi_4 & \beta_3
        \end{smallmatrix}
      \right]} \cdots \xrightarrow{\left[
        \begin{smallmatrix} 
          -\Sigma \alpha_1 & 0\\
          \hfill \Sigma \varphi_1 & \beta_n
        \end{smallmatrix}
      \right]} \Sigma A_2 \oplus \Sigma B_1 \xrightarrow{\left[
        \begin{smallmatrix}
          (-1)^{n + 1} \Sigma \alpha_2 & 0\\
          (-1)^n \Sigma \varphi_2 & (-1)^n \Sigma \beta_1
        \end{smallmatrix}
      \right]} \Sigma A_3 \oplus \Sigma B_2
  \end{equation*}
  is an $n$-angle.  Its right rotation
  \begin{equation*}
    A_2 \oplus B_1 \xrightarrow{\left[
        \begin{smallmatrix}
          -\alpha_2 & 0\\
          \hfill \varphi_2 & \beta_1
        \end{smallmatrix}
      \right]} A_3 \oplus B_2 \xrightarrow{\left[
        \begin{smallmatrix}
          -\alpha_3 & 0\\
          \hfill \varphi_3 & \beta_2
        \end{smallmatrix}
      \right]} \cdots \xrightarrow{\left[
        \begin{smallmatrix} 
          -\alpha_n & 0\\
          \hfill \varphi_n & \beta_{n - 1}
        \end{smallmatrix}
      \right]} \Sigma A_1 \oplus B_n \xrightarrow{\left[
        \begin{smallmatrix}
          -\Sigma \alpha_1 & 0\\
          \hfill \Sigma \varphi_1 & \beta_n
        \end{smallmatrix}
      \right]} \Sigma A_2 \oplus \Sigma B_1
  \end{equation*}
  is the mapping cone of $\varphi$, and this is an $n$-angle by axiom
  (N2). This completes the proof.
\end{proof}

Collecting Theorem \ref{thm:octa} and Theorem \ref{thm:opposite} gives
the following.

\begin{theorem}
  \label{thm:N4N4*}
  If $\nang$ is a collection of $n$-$\Sigma$-sequences satisfying
  axioms \emph{(N1),(N2)} and \emph{(N3)}, then the following are
  equivalent:
  \begin{enumerate}
  \item $\nang$ satisfies \emph{(N4)},
  \item $\nang$ satisfies \emph{(N4*)}.
  \end{enumerate}
\end{theorem}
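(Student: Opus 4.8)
The plan is simply to assemble the two implications that have already been established. Throughout, $\nang$ is a collection of $n$-$\Sigma$-sequences satisfying axioms (N1), (N2) and (N3), so both Theorem \ref{thm:octa} and Theorem \ref{thm:opposite} are directly applicable.

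For the implication (1) $\Rightarrow$ (2) I would invoke Theorem \ref{thm:octa}: if, in addition, $\nang$ satisfies (N4), then it satisfies (N4*). For the converse (2) $\Rightarrow$ (1) I would invoke Theorem \ref{thm:opposite}: if, in addition, $\nang$ satisfies (N4*), then it satisfies (N4). Since each of the conditions (1) and (2) implies the other, they are equivalent, which is exactly the assertion of the theorem.

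There is no genuine obstacle remaining at this stage, since the substantive work has been isolated in the two preceding theorems. If one were to prove the equivalence from scratch, the harder half is Theorem \ref{thm:opposite}, i.e.\ deducing (N4) from (N4*): the key idea there is to apply (N4*) not to the originally given $n$-angles but to the auxiliary $n$-angles $X_\bullet$, $Y_\bullet$, $Z_\bullet$ assembled from $A_\bullet$, $B_\bullet$ and suitable (rotations of) trivial $n$-angles, and then to verify that the morphisms and the new $n$-angle produced by (N4*) collapse --- after one right rotation, handled by (N2) --- to precisely the morphism of $n$-angles and the mapping cone demanded by (N4). That collapse is essentially a block-matrix bookkeeping computation, identifying each $\sigma_i$ with the expected $\varphi_i$ (and $\psi_{2n-5}$ with $\left[\begin{smallmatrix}-\Sigma\alpha_1 & 0\\ \Sigma\varphi_1 & \beta_n\end{smallmatrix}\right]$), and it is where I would expect to spend most of the effort. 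By contrast, the direction (N4) $\Rightarrow$ (N4*) in Theorem \ref{thm:octa} is a more direct double application of (N4), together with Lemma \ref{lem:hmtpy} to recognize the intermediate $n$-$\Sigma$-sequence as an $n$-angle.
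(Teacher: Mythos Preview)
Your proposal is correct and matches the paper's own proof exactly: the theorem is stated as a corollary of Theorems \ref{thm:octa} and \ref{thm:opposite}, with no additional argument needed. Your extra commentary on the structure of those two preceding proofs is accurate as well.
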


We now discuss the case when $n = 3$, that is, when our category $\C$
is a triangulated category.  In this case, the classical octahedral
axiom, which was introduced by Verdier in \cite{Verdier1, Verdier2},
is the following:

\begin{itemize}
\item[\textbf{(TR4)}] Given a commutative diagram
  \begin{center}
    \begin{tikzpicture}
      \node (A1oppe) at (0,1.5){$A_1$};
      \node (A2) at (2,1.5){$A_2$};
      \node (A3) at (4,1.5){$A_3$};
      \node (SA1oppe) at (6,1.5){$\Sigma A_1$};

      \node (A1nede) at (0,0){$A_1$};
      \node (B2) at (2,0){$B_2$};
      \node (B3) at (4,0){$B_3$};
      \node (SA1nede) at (6,0){$\Sigma A_1$};

      \node (C3) at (2,-1.5){$C_3$};
      \node (SA2) at (2,-3){$\Sigma A_2$};

      \begin{scope}[font=\scriptsize,->,midway]
        \draw[-,double equal sign distance] (A1oppe) -- (A1nede);
        \draw (A2) -- node[right]{$\varphi_2$} (B2);
        \draw[-,double equal sign distance] (SA1oppe) -- (SA1nede);
        \draw (B2) -- node[right]{$\gamma_2$} (C3);
        \draw (C3) -- node[right]{$\gamma_3$} (SA2); 
       
        \draw (A1oppe) -- node[above]{$\alpha_1$} (A2);
        \draw (A2) -- node[above]{$\alpha_2$} (A3);
        \draw (A3) -- node[above]{$\alpha_3$} (SA1oppe);
        \draw (A1nede) -- node[above]{$\beta_1$}
          (B2);
        \draw (B2) -- node[above]{$\beta_2$} (B3);
        \draw (B3) -- node[above]{$\beta_3$} (SA1nede);
      \end{scope}
    \end{tikzpicture}
  \end{center}
  in which the top rows and second column are triangles.  Then there
  exist morphisms $A_3 \xrightarrow{\varphi_3} B_3$ and $B_3
  \xrightarrow{\psi_1} C_3$ with the following properties: the diagram
   \begin{center}
    \begin{tikzpicture}
      \node (A1oppe) at (0,1.5){$A_1$};
      \node (A2) at (2,1.5){$A_2$};
      \node (A3) at (4,1.5){$A_3$};
      \node (SA1oppe) at (6,1.5){$\Sigma A_1$};

      \node (A1nede) at (0,0){$A_1$};
      \node (B2) at (2,0){$B_2$};
      \node (B3) at (4,0){$B_3$};
      \node (SA1nede) at (6,0){$\Sigma A_1$};

      \node (C3) at (2,-1.5){$C_3$};
      \node (SA2) at (2,-3){$\Sigma A_2$};
      
      \node (C3*) at (4,-1.5){$C_3$};
      \node (SA2*) at (4,-3){$\Sigma A_2$};
      
      \node (S) at (3,-0.7){$\Theta$};

      \begin{scope}[font=\scriptsize,->,midway]
        \draw[-,double equal sign distance] (A1oppe) -- (A1nede);
        \draw (A2) -- node[right]{$\varphi_2$} (B2);
        \draw (A3) -- node[right]{$\varphi_3$} (B3);
        \draw[-,double equal sign distance] (SA1oppe) -- (SA1nede);
        \draw (B2) -- node[right]{$\gamma_2$} (C3);
        \draw (B3) -- node[right]{$\psi_1$} (C3*);
        \draw (C3) -- node[right]{$\gamma_3$} (SA2); 
        \draw (C3*) -- node[right]{$\Sigma \alpha_2 \circ
          \gamma_3$} (SA2*);
       
        \draw (A1oppe) -- node[above]{$\alpha_1$} (A2);
        \draw (A2) -- node[above]{$\alpha_2$} (A3);
        \draw (A3) -- node[above]{$\alpha_3$} (SA1oppe);
        \draw (A1nede) -- node[above]{$\beta_1$}
          (B2);
        \draw (B2) -- node[above]{$\beta_2$} (B3);
        \draw (B3) -- node[above]{$\beta_3$} (SA1nede);
        \draw[-,double equal sign distance] (C3) -- (C3*);
        \draw (SA2) -- node[above]{$\Sigma \alpha_2$} (SA2*);
      \end{scope}
    \end{tikzpicture}
  \end{center}
  is commutative, the third column is a triangle, and $\gamma_3 \circ
  \psi_1 = \Sigma \alpha_1 \circ \beta_3$.
\end{itemize}

This is almost the same as our axiom (N4*): there is one difference.
Namely, axiom (N4*) does \emph{not} guarantee that the square $\Theta$
commutes.  However, when $n = 3$ and we start with the diagram given
in (TR4), then in the proof of Theorem \ref{thm:octa} we obtain the
commutative diagram
\begin{center}
  \begin{tikzpicture}
    \node (A2oppe) at (0,1.5){$A_2$};
    \node (A3B2) at (2,1.5){$A_3 \oplus B_2$};
    \node (B3) at (4,1.5){$B_3$};
    \node (SA2oppe) at (6,1.5){$\Sigma A_2$};
    
    \node (A2nede) at (0,0){$A_2$};
    \node (B2) at (2,0){$B_2$};
    \node (C3) at (4,0){$C_3$};
    \node (SA2nede) at (6,0){$\Sigma A_2$};
    
    \begin{scope}[font=\scriptsize,->,midway,right]
      \draw[-,double equal sign distance] (A2oppe) -- (A2nede);
      \draw (A3B2) -- node{$\left[
          \begin{smallmatrix}
            0 & 1
          \end{smallmatrix}
        \right]$} (B2);
      \draw (B3) -- node{$\psi_{1}$} (C3);
      \draw[-,double equal sign distance] (SA2oppe) -- (SA2nede);
    \end{scope}
    
    \begin{scope}[font=\scriptsize,->,midway,above]
      \draw (A2oppe) -- node{$\left[
          \begin{smallmatrix}
            -\alpha_2\\
            \hfill \varphi_2
          \end{smallmatrix}
        \right]$} (A3B2);
      \draw (A3B2) -- node{$\left[
          \begin{smallmatrix}
            \varphi_3 & \beta_{2}
          \end{smallmatrix}
        \right]$} (B3);
      \draw (B3) -- node{$\Sigma \alpha_1 \circ \beta_3$} (SA2oppe);
      \draw (A2nede) -- node{$\varphi_2$} (B2);
      \draw (B2) -- node{$\gamma_2$} (C3);
      \draw (C3) -- node{$\gamma_3$} (SA2nede);
    \end{scope}
  \end{tikzpicture}
\end{center}

The commutativity of the middle square implies that the square
$\Theta$ in (TR4) commutes. Therefore, we recover the original
octahedral axiom (TR4) from axioms (N1), (N2), (N3) and
(N4). Conversely, Neeman proves in \cite[Theorem 1.8]{N1} that axioms
(N1), (N2), (N3) and (TR4) together imply axiom (N4). Consequently,
when $n = 3$ and the collection $\nang$ of $3$-$\Sigma$-sequences
satisfies axioms (N1),(N2) and (N3), then the following are
equivalent:
\begin{itemize} 
\item[(1)] $\nang$ satisfies (N4),
\item[(2)] $\nang$ satisfies (TR4),
\item[(3)] $\nang$ satisfies (N4*).
\end{itemize} 

We end this section with a discussion of homotopy cartesian
diagrams. Recall that when $n = 3$, then a commutative square
\begin{center}
  \begin{tikzpicture}
    \node (A1) at (0,1.5){$A_1$};
    \node (A2) at (2,1.5){$A_2$};
    
    \node (B1) at (0,0){$B_1$};
    \node (B2) at (2,0){$B_2$};

    \begin{scope}[font=\scriptsize,->,midway,right]
      \draw (A1) -- node{$\varphi_1$} (B1);
      \draw (A2) -- node{$\varphi_2$} (B2);
    \end{scope}
    
    \begin{scope}[font=\scriptsize,->,midway,above]
      \draw (A1) -- node{$\alpha$} (A2);
      \draw (B1) -- node{$\beta$} (B2);
    \end{scope}
  \end{tikzpicture}
\end{center}
is \emph{homotopy cartesian} if there exists a triangle
\begin{equation*}
  A_1 \xrightarrow{\left[
      \begin{smallmatrix}
        -\alpha\phantom{_1}\\
        \phantom{-} \varphi_1
      \end{smallmatrix}
    \right]} A_2 \oplus B_1 
  \xrightarrow{\left[
      \begin{smallmatrix}
        \varphi_2 & \beta
      \end{smallmatrix}
    \right]} B_2
  \xrightarrow{\partial} \Sigma A_1
\end{equation*}
for some morphism $B_2 \xrightarrow{\partial} \Sigma A_1$.  Now let
(TR4*) be the axiom which is the same as (TR4), but with the
additional requirement that the commutative square
\begin{center}
  \begin{tikzpicture}
    \node (A2) at (0,1.5){$A_2$};
    \node (A3) at (2,1.5){$A_3$};
    
    \node (B2) at (0,0){$B_2$};
    \node (B3) at (2,0){$B_3$};

    \begin{scope}[font=\scriptsize,->,midway,right]
      \draw (A2) -- node{$\varphi_2$} (B2);
      \draw (A3) -- node{$\varphi_3$} (B3);
    \end{scope}
    
    \begin{scope}[font=\scriptsize,->,midway,above]
      \draw (A2) -- node{$\alpha_2$} (A3);
      \draw (B2) -- node{$\beta_2$} (B3);
    \end{scope}
  \end{tikzpicture}
\end{center}
is homotopy cartesian.  Neeman shows in \cite{N1,N2} that (TR4) is
equivalent to the stronger (TR4*).  Consequently, the axioms (N4),
(N4*), (TR4) and (TR4*) are all equivalent.

Now let $\C$ be $n$-angulated.  Motivated by the above, we say that a
commutative diagram
\begin{center}
  \begin{tikzpicture}
    \node (A1) at (0,1.5){$A_1$};
    \node (A2) at (2,1.5){$A_2$};
    \node (Upperdots) at (4,1.5){$\cdots$};
    \node (An-2) at (6,1.5){$A_{n-2}$};
    \node (An-1) at (8,1.5){$A_{n-1}$};
    
    \node (B1) at (0,0){$B_1$};
    \node (B2) at (2,0){$B_2$};
    \node (Lowerdots) at (4,0){$\cdots$};
    \node (Bn-2) at (6,0){$B_{n-2}$};
    \node (Bn-1) at (8,0){$B_{n-1}$};

    \begin{scope}[font=\scriptsize,->,midway,right]
      \draw (A1) -- node{$\varphi_1$} (B1);
      \draw (A2) -- node{$\varphi_2$} (B2);
      \draw (An-2) -- node{$\varphi_{n-2}$} (Bn-2);
      \draw (An-1) -- node{$\varphi_{n-1}$} (Bn-1);
    \end{scope}
    
    \begin{scope}[font=\scriptsize,->,midway,above]
      \draw (A1) -- node{$\alpha_1$} (A2);
      \draw (A2) -- node{$\alpha_2$} (Upperdots);
      \draw (Upperdots) -- node{$\alpha_{n-3}$} (An-2);
      \draw (An-2) -- node{$\alpha_{n-2}$} (An-1);
      \draw (B1) -- node{$\beta_1$} (B2);
      \draw (B2) -- node{$\beta_2$} (Lowerdots);
      \draw (Lowerdots) -- node{$\beta_{n-3}$} (Bn-2);
      \draw (Bn-2) -- node{$\beta_{n-2}$} (Bn-1);
    \end{scope}
  \end{tikzpicture}
\end{center}
is \emph{homotopy cartesian} if the $n$-$\Sigma$-sequence
\begin{equation*}
  A_1 \xrightarrow{\left[
      \begin{smallmatrix}
        -\alpha_1\\
        \hfill \varphi_1
      \end{smallmatrix} \right]} A_2 \oplus B_1 \xrightarrow{\left[
      \begin{smallmatrix}
        \alpha_2 & 0\\
        \varphi_2 & \beta_1
      \end{smallmatrix}
    \right]} A_3 \oplus B_2 \xrightarrow{\left[
      \begin{smallmatrix}
        \hfill \alpha_3 & 0\\
        -\varphi_3 & \beta_2
      \end{smallmatrix}
    \right]} \dots \xrightarrow{\left[
      \begin{smallmatrix}
        \alpha_{n - 2} & 0\\
        (-1)^n \varphi_{n - 2} & \beta_{n - 3}
      \end{smallmatrix}
    \right]} A_{n-1} \oplus B_{n - 2} \xrightarrow{\left[
      \begin{smallmatrix}
        (-1)^{n + 1} \varphi_{n-1} & \beta_{n - 2}
      \end{smallmatrix}
    \right]} B_{n-1} \xrightarrow{\partial} \Sigma A_1
\end{equation*}
is an $n$-angle for some morphism $B_{n-1} \xrightarrow{\partial}
\Sigma A_1$.  In the proof of Theorem \ref{thm:octa}, when we showed
that axiom (N4*) follows from axiom (N4), we proved in addition that
the commutative diagram
\begin{center}
  \begin{tikzpicture}
    \node (A1) at (0,1.5){$A_2$};
    \node (A2) at (2,1.5){$A_3$};
    \node (Upperdots) at (4,1.5){$\cdots$};
    \node (An-2) at (6,1.5){$A_{n-1}$};
    \node (An-1) at (8,1.5){$A_{n}$};
    
    \node (B1) at (0,0){$B_2$};
    \node (B2) at (2,0){$B_3$};
    \node (Lowerdots) at (4,0){$\cdots$};
    \node (Bn-2) at (6,0){$B_{n-1}$};
    \node (Bn-1) at (8,0){$B_{n}$};

    \begin{scope}[font=\scriptsize,->,midway,right]
      \draw (A1) -- node{$\varphi_2$} (B1);
      \draw (A2) -- node{$\varphi_3$} (B2);
      \draw (An-2) -- node{$\varphi_{n-1}$} (Bn-2);
      \draw (An-1) -- node{$\varphi_{n}$} (Bn-1);
    \end{scope}
    
    \begin{scope}[font=\scriptsize,->,midway,above]
      \draw (A1) -- node{$\alpha_2$} (A2);
      \draw (A2) -- node{$\alpha_3$} (Upperdots);
      \draw (Upperdots) -- node{$\alpha_{n-2}$} (An-2);
      \draw (An-2) -- node{$\alpha_{n-1}$} (An-1);
      \draw (B1) -- node{$\beta_2$} (B2);
      \draw (B2) -- node{$\beta_3$} (Lowerdots);
      \draw (Lowerdots) -- node{$\beta_{n-2}$} (Bn-2);
      \draw (Bn-2) -- node{$\beta_{n-1}$} (Bn-1);
    \end{scope}
  \end{tikzpicture}
\end{center} 
is homotopy cartesian.  In fact, that was precisely Lemma
\ref{lem:hmtpy}.  Consequently, axiom (N4*) (and then also axiom (N4))
is equivalent to the stronger axiom which requires the above
commutative diagram to be homotopy cartesian.

\section{An example: the Geiss-Keller-Oppermann-category}
\label{sec:example}
In this section, we recall the example from \cite{GKO} of an
$n$-angulated category arising from an $(n-2)$-cluster tilting
subcategory of a triangulated category.  Let $\T$ be a triangulated
category with suspension $\Sigma$, and let $\C$ be a full subcategory.
Then $\C$ is an \emph{$(n-2)$-cluster tilting} subcategory if it
satisfies the following two properties:
\begin{enumerate}
\item $\C$ is functorially finite in $\T$.
\item $\C$ is given by
\begin{align*}
  \C & = \{X \in \T \mid \Hom_{\T}(X, \Sigma^iC) = 0 \text{ for } 1
  \leq i \leq n-3 \text{ and } \forall C \in \C\}\\
  & = \{X \in \T \mid \Hom_{\T}(\Sigma^iC,X) = 0 \text{ for } 1 \leq i
  \leq n - 3 \text{ and } \forall C \in \C\}.
\end{align*}
\end{enumerate}
Suppose in addition that $\C$ is closed under the automorphism
$\Sigma^{n-2}$ of $\T$, and denote $\Sigma^{n-2}$ by $\susp$. Let
$\nang$ be the collection of $n$-$\susp$-sequences
\begin{equation*}
  A_1 \xrightarrow{\alpha_1} A_2 \xrightarrow{\alpha_2} \cdots
  \xrightarrow{\alpha_{n-1}} A_n \xrightarrow{\alpha_n} \susp A_1
\end{equation*}
in $\C$ such there exists a diagram 
\begin{center}
  \begin{tikzpicture}[text centered]
    \node (A2) at (1.5,1.5){$A_2$};
    \node (A3) at (4.5,1.5){$A_3$};
    \node (Adots) at (5.5,1.5){$\cdots$};
    \node (An-2) at (6.5,1.5){$A_{n-2}$};
    \node (An-1) at (9.5,1.5){$A_{n-1}$};
    
    \node (A1) at (0,0){$A_1$};
    \node (X1) at (3,0){$X_1$};
    \node (X2) at (6,0){$X_2$};
    \node (Xdots) at (7,0){$\cdots$};
    \node (Xn-3) at (8,0){$X_{n-3}$};
    \node (An) at (11,0){$A_n$};
    
    \node (T1) at (1.5,0.8){$\Delta$};
    \node (T2) at (4.5,0.8){$\Delta$};
    \node (Tn-2) at (9.5,0.8){$\Delta$};
    
    \begin{scope}[->,font=\scriptsize,midway]
      \draw (A1) -- node[above left]{$\alpha_1$} (A2);
      \draw (A2) -- node[above right]{$f_1$} (X1);
      \draw (X1) -- node[above left]{$g_1$} (A3);
      \draw (A3) -- node[above right]{$f_2$} (X2);
      \draw (An-2) -- node[above right]{$f_{n-3}$} (Xn-3);
      \draw (Xn-3) -- node[above left]{$g_{n-3}$} (An-1);
      \draw (An-1) -- node[above right]{$\alpha_{n-1}$} (An);
      
      \draw (A2) -- node[above]{$\alpha_2$} (A3);
      \draw (An-2) -- node[above]{$\alpha_{n-2}$} (An-1);
      \draw[|->] (X1) -- node[above]{$\partial_{n-2}$} (A1);
      \draw[|->] (X2) -- node[above]{$\partial_{n-3}$} (X1);
      \draw[|->] (An) -- node[above]{$\partial_1$} (Xn-3);     
    \end{scope}
  \end{tikzpicture}
\end{center}
in $\T$ with the following properties:
\begin{enumerate}
\item Each diagram triangle $\Delta$ is a triangle in $\T$, where a
  map $X \mapsto Y$ denotes a map from $X$ to $\Sigma Y$.
\item The other diagram triangles commute.
\item The map $\alpha_n$ equals the composition
  $\Sigma^{n-3} \partial_{n-2} \circ \Sigma^{n-4} \partial_{n-3} \circ
  \cdots \circ \partial_1$.
\end{enumerate}
Then it is shown in \cite[Section 3, Theorem 1]{GKO} that
$(\C,\susp,\nang)$ is an $n$-angulated category.  In what follows, we
verify the octahedral axiom (N4*) in the case when $n = 4$.  Thus we
assume that $\C$ is a $2$-cluster tilting subcategory of $\T$ closed
under the automorphism $\Sigma^2$, which we denote by $\susp$.

Suppose we are given a commutative diagram
\begin{center}
  \begin{tikzpicture}[text centered]
    \node (A1) at (0,1.5){$A_1$};
    \node (A2) at (1.5,1.5){$A_2$};
    \node (A3) at (3,1.5){$A_3$};
    \node (A4) at (4.5,1.5){$A_4$};
    \node (SA1) at (6,1.5){$\susp A_1$};
    
    \node (B1) at (0,0){$A_1$};
    \node (B2) at (1.5,0){$B_2$};
    \node (B3) at (3,0){$B_3$};
    \node (B4) at (4.5,0){$B_4$};
    \node (SB1) at (6,0){$\susp A_1$};
    
    \node (C3) at (1.5,-1.5){$C_3$};
    \node (C4) at (1.5,-3){$C_4$};
    \node (SA2) at (1.5,-4.5){$\susp A_2$};

    \begin{scope}[->,font=\scriptsize,midway]
      \draw[-,double equal sign distance] (A1) -- (B1);
      \draw (A2) -- node[right]{$\varphi_2$} (B2);
      \draw[-,double equal sign distance] (SA1) -- (SB1);
      \draw (B2) -- node[right]{$\gamma_2$} (C3);
      \draw (C3) -- node[right]{$\gamma_3$} (C4);
      \draw (C4) -- node[right]{$\gamma_4$} (SA2);
      
      \draw (A1) -- node[above]{$\alpha_1$} (A2);
      \draw (A2) -- node[above]{$\alpha_2$} (A3);
      \draw (A3) -- node[above]{$\alpha_3$} (A4);
      \draw (A4) -- node[above]{$\alpha_4$} (SA1);
      
      \draw (B1) -- node[above]{$\beta_1$} (B2);
      \draw (B2) -- node[above]{$\beta_2$} (B3);
      \draw (B3) -- node[above]{$\beta_3$} (B4);
      \draw (B4) -- node[above]{$\beta_4$} (SB1);
     \end{scope}
  \end{tikzpicture}
\end{center}
in $\C$ whose top rows and second column are $4$-angles. We must find
morphisms $A_3 \xrightarrow{\varphi_3} B_3, A_4
\xrightarrow{\varphi_4} B_4$ and $\psi_1, \psi_2, \psi_3$ with the
following two properties:
\begin{enumerate}
\item The sequence $(1, \varphi_2, \varphi_3, \varphi_4 )$ is a
  morphism of $4$-angles.
\item The $4$-$\susp$-sequence
\begin{equation*}
  A_3 \xrightarrow{\left[
      \begin{smallmatrix}
        \alpha_3\\
        \varphi_3
      \end{smallmatrix}
    \right]} A_4 \oplus B_3 \xrightarrow{\left[
      \begin{smallmatrix}
        \varphi_4 & -\beta_3\\
        \psi_2 & \hfill \psi_1
      \end{smallmatrix}
    \right]} B_4 \oplus C_3 \xrightarrow{\left[
      \begin{smallmatrix}
        \psi_3 & \gamma_3\\
      \end{smallmatrix}
    \right]} C_4 \xrightarrow{\susp \alpha_2 \circ \gamma_4} \susp
  A_3
\end{equation*}
is a $4$-angle in $\C$, and $\gamma_4 \circ \psi_3 = \susp \alpha_1
\circ \beta_4$.
\end{enumerate}
We only use the axioms and properties of the underlying triangulated
category $\T$.  The three given $4$-angles correspond to three
diagrams
\begin{center}
  \begin{tikzpicture}[text centered]
    \node (A2) at (1.5,1.5){$A_2$};
    \node (A3) at (4.5,1.5){$A_3$};    
    \node (A1) at (0,0){$A_1$};
    \node (X) at (3,0){$X$};
    \node (A4) at (6,0){$A_4$};      
    \node (TA1) at (1.5,0.8){$\Delta_2$};
    \node (TA2) at (4.5,0.8){$\Delta_1$};
        
    \begin{scope}[->,font=\scriptsize,midway]
      \draw (A1) -- node[above left]{$\alpha_1$} (A2);
      \draw (A2) -- node[above right]{$f$} (X);
      \draw (X) -- node[above left]{$g$} (A3);
      \draw (A3) -- node[above right]{$\alpha_3$} (A4);
      \draw (A2) -- node[above]{$\alpha_2$} (A3);
      \draw[|->] (X) -- node[above]{$\partial_2$} (A1);
      \draw[|->] (A4) -- node[above]{$\partial_1$} (X);    
    \end{scope}
    
    \node (B2) at (1.5,-1){$B_2$};
    \node (B3) at (4.5,-1){$B_3$};    
    \node (B1) at (0,-2.5){$A_1$};
    \node (Y) at (3,-2.5){$Y$};
    \node (B4) at (6,-2.5){$B_4$};      
    \node (TB1) at (1.5,-1.7){$\Delta_2'$};
    \node (TB2) at (4.5,-1.7){$\Delta_1'$};
        
    \begin{scope}[->,font=\scriptsize,midway]
      \draw (B1) -- node[above left]{$\beta_1$} (B2);
      \draw (B2) -- node[above right]{$f'$} (Y);
      \draw (Y) -- node[above left]{$g'$} (B3);
      \draw (B3) -- node[above right]{$\beta_3$} (B4);
      \draw (B2) -- node[above]{$\beta_2$} (B3);
      \draw[|->] (Y) -- node[above]{$\partial'_2$} (B1);
      \draw[|->] (B4) -- node[above]{$\partial'_1$} (Y);    
    \end{scope}
    
    \node (C2) at (1.5,-3.5){$B_2$};
    \node (C3) at (4.5,-3.5){$C_3$};    
    \node (C1) at (0,-5){$A_2$};
    \node (Z) at (3,-5){$Z$};
    \node (C4) at (6,-5){$C_4$};      
    \node (TC1) at (1.5,-4.2){$\Delta_2''$};
    \node (TC2) at (4.5,-4.2){$\Delta_1''$};
        
    \begin{scope}[->,font=\scriptsize,midway]
      \draw (C1) -- node[above left]{$\varphi_2$} (C2);
      \draw (C2) -- node[above right]{$f''$} (Z);
      \draw (Z) -- node[above left]{$g''$} (C3);
      \draw (C3) -- node[above right]{$\gamma_3$} (C4);
      \draw (C2) -- node[above]{$\gamma_2$} (C3);
      \draw[|->] (Z) -- node[above]{$\partial''_2$} (C1);
      \draw[|->] (C4) -- node[above]{$\partial''_1$} (Z);    
    \end{scope}   
  \end{tikzpicture}
\end{center}
in $\T$, that is, each $4$-angle is built from two triangles.

\subsection*{Step 1}
Choose a morphism $X \xrightarrow{\varphi} Y$ in $\T$ such that the
mapping cone of the morphism
\begin{center}
  \begin{tikzpicture}[text centered]
    \node (A1) at (0,1.5){$A_1$};
    \node (A2) at (2,1.5){$A_2$};
    \node (X) at (4,1.5){$X$};
    \node (SA1) at (6,1.5){$\Sigma A_1$};
    
    \node (B1) at (0,0){$A_1$};
    \node (B2) at (2,0){$B_2$};
    \node (Y) at (4,0){$Y$};
    \node (SB1) at (6,0){$\Sigma A_1$};
    
    \begin{scope}[->,font=\scriptsize,midway]
      \draw[-,double equal sign distance] (A1) -- (B1);
      \draw (A2) -- node[right]{$\varphi_2$} (B2);
      \draw[->,dashed] (X) -- node[right]{$\varphi$} (Y);
      \draw[-,double equal sign distance] (SA1) -- (SB1);
      
      \draw (A1) -- node[above]{$\alpha_1$} (A2);
      \draw (A2) -- node[above]{$f$} (X);
      \draw (X) -- node[above]{$\partial_2$} (SA1);
      
      \draw (B1) -- node[above]{$\beta_1$} (B2);
      \draw (B2) -- node[above]{$f'$} (Y);
      \draw (Y) -- node[above]{$\partial'_2$} (SB1);

     \end{scope}
  \end{tikzpicture}
\end{center}
is a triangle in $\T$:
\begin{equation*}
  A_2 \oplus A_1 \xrightarrow{\left[
      \begin{smallmatrix}
        -f\phantom{_2} & 0\\
        \hfill \varphi_2 & \beta_1
      \end{smallmatrix}
    \right]} X \oplus B_2 \xrightarrow{\left[
      \begin{smallmatrix}
        - \partial_2 & 0\\
        \varphi & \hfill f'
      \end{smallmatrix}
    \right]} \Sigma A_1 \oplus Y \xrightarrow{\left[
      \begin{smallmatrix}
        - \Sigma \alpha_1 & 0\\
        1 & \partial_2'
      \end{smallmatrix}
    \right]} 
  \Sigma A_2 \oplus \Sigma A_1.
\end{equation*}
The $3$-$\Sigma$ sequence
\begin{equation*}
  A_2 \xrightarrow{\left[
      \begin{smallmatrix}
        -f\phantom{_2}\\
        \hfill \varphi_2
      \end{smallmatrix}
    \right]} X \oplus B_2 \xrightarrow{\left[
      \begin{smallmatrix}
        \varphi & \hfill f'
      \end{smallmatrix}
    \right]} Y \xrightarrow{\Sigma \alpha_1 \circ \partial_2'} 
  \Sigma A_2 
\end{equation*}
is a direct summand, and therefore a triangle in $\T$.

\subsection*{Step 2}
By the $3 \times 3$ Lemma (cf.\ \cite[Lemma 2.6]{May}, there exists an
object $W$ in $\T$ and maps $w_1, \dots, w_5$ such that the diagram
\begin{center}
  \begin{tikzpicture}[text centered]
    \node (X2) at (2,1.5){$\Sigma^{-1} A_4$};
    \node (X3) at (4,1.5){$\Sigma^{-1} A_4$};
    
    \node (Y1) at (0,0){$A_2$};
    \node (Y2) at (2,0){$X \oplus B_2$};
    \node (Y3) at (4,0){$Y$};
    \node (Y4) at (6,0){$\Sigma A_2$};
    
    \node (Z1) at (0,-1.5){$A_2$};
    \node (Z2) at (2,-1.5){$A_3 \oplus B_2$};
    \node (Z3) at (4,-1.5){$W$};
    \node (Z4) at (6,-1.5){$\Sigma A_2$};
    
    \node (U2) at (2,-3){$A_4$};
    \node (U3) at (4,-3){$A_4$};
    
    \begin{scope}[->,font=\scriptsize,midway]
      \draw[-,double equal sign distance] (Y1) -- (Z1);
      
      \draw (X2) -- node[right]{$\left[
      \begin{smallmatrix}
        -\Sigma^{-1} \partial_1 \\
        0
      \end{smallmatrix}
      \right]$} (Y2);
      \draw (Y2) -- node[right]{$\left[
      \begin{smallmatrix}
        g & 0 \\
        0 & 1
      \end{smallmatrix}
      \right]$} (Z2);
      \draw (Z2) -- node[right]{$\left[
      \begin{smallmatrix}
        \alpha_3 & 0
      \end{smallmatrix}
      \right]$} (U2);
      
      \draw (X3) -- node[right]{$-\varphi \circ
        \Sigma^{-1} \partial_1$} (Y3);
      \draw[->,dashed] (Y3) -- node[right]{$w_3$} (Z3);
      \draw[->,dashed] (Z3) -- node[right]{$w_5$} (U3);
      
      \draw[-,double equal sign distance] (Y4) -- (Z4);
            
      \draw[-,double equal sign distance] (X2) -- (X3);
      
      \draw (Y1) -- node[above]{$\left[
          \begin{smallmatrix}
            -f\phantom{_2}\\
            \hfill \varphi_2
          \end{smallmatrix}
        \right]$} (Y2);
      \draw (Y2) -- node[above]{$\left[
          \begin{smallmatrix}
            \varphi & f' \\
          \end{smallmatrix}
        \right]$} (Y3);
      \draw (Y3) -- node[above]{$\Sigma \alpha_1 \circ \partial_2'$}
        (Y4);
      
      \draw (Z1) -- node[above]{$\left[
          \begin{smallmatrix}
            -\alpha_2 \\
            \hfill \varphi_2
          \end{smallmatrix}
        \right]$} (Z2);
      \draw[->,dashed] (Z2) -- node[above]{$\left[
          \begin{smallmatrix}
            w_1 & w_2 \\
          \end{smallmatrix}
        \right]$} (Z3);
      \draw[->,dashed] (Z3) -- node[above]{$w_4$} (Z4);

      \draw[-,double equal sign distance] (U2) -- (U3);
     \end{scope}
  \end{tikzpicture}
\end{center}
commutes, and all columns and rows are triangles.  The second row is
the triangle from step 1, whereas the second column is the direct sum
of the left rotation of $\Delta_1$ and the trivial triangle on $B_2$.
By the octahedral axiom, there exist maps $w_6,w_7$ such that the
diagram
\begin{center}
  \begin{tikzpicture}[text centered]
    \node (X1) at (0,1.5){$\Sigma^{-1} A_4$};
    \node (X2) at (2,1.5){$Y$};
    \node (X3) at (4,1.5){$W$};
    \node (X4) at (6,1.5){$A_4$};
    
    \node (Y1) at (0,0){$\Sigma^{-1} A_4$};
    \node (Y2) at (2,0){$B_3$};
    \node (Y3) at (4,0){$A_4 \oplus B_3$};
    \node (Y4) at (6,0){$A_4$};
    
    \node (Z2) at (2,-1.5){$B_4$};
    \node (Z3) at (4,-1.5){$B_4$};
      
    \node (U2) at (2,-3){$\Sigma Y$};
    \node (U3) at (4,-3){$\Sigma W$};
    
    \begin{scope}[->,font=\scriptsize,midway]
      \draw[-,double equal sign distance] (X1) -- (Y1);
      
      \draw (X2) -- node[right]{$g'$} (Y2);
      \draw (Y2) -- node[right]{$\beta_3$} (Z2);
      \draw (Z2) -- node[right]{$\partial_1'$} (U2);
      
      \draw[->,dashed] (X3) -- node[right]{$\left[
          \begin{smallmatrix}
            w_5\\
            w_6
          \end{smallmatrix}
        \right]$} (Y3);
      \draw[->,dashed] (Y3) -- node[right]{$\left[
          \begin{smallmatrix}
            w_7 & \beta_3
          \end{smallmatrix}
        \right]$} (Z3);
      \draw (Z3) -- node[right]{$\Sigma w_3 \circ \partial_1'$} (U3);
      
      \draw[-,double equal sign distance] (X4) -- (Y4);
            
      \draw (X1) -- node[above]{$-\varphi \circ
        \Sigma^{-1} \partial_1$} (X2);
      \draw (X2) -- node[above]{$w_3$} (X3);
      \draw (X3) -- node[above]{$w_5$} (X4);
      
      \draw (Y1) -- node[above]{$0$} (Y2);
      \draw (Y2) -- node[above]{$\left[
          \begin{smallmatrix}
            0 \\
            1
          \end{smallmatrix}
        \right]$} (Y3);
      \draw (Y3) -- node[above]{$\left[
          \begin{smallmatrix}
            1 & 0
          \end{smallmatrix}
        \right]$} (Y4);
      
      \draw[-,double equal sign distance] (Z2) -- (Z3);
      
      \draw (U2) -- node[above]{$\Sigma w_3$} (U3);
     \end{scope}
  \end{tikzpicture}
\end{center}
commutes, all rows and columns are triangles, and $\partial_1' \circ
w_7 = - \Sigma \varphi \circ \partial_1$.  The first row is a triangle
from the diagram above, the second row is a direct sum of trivial
triangles, and the second column is $\Delta_1'$.

\subsection*{Step 3}
Define maps
\begin{align*}
  A_3 & \xrightarrow{\varphi_3} B_3, \hspace{5mm} \varphi_3 \ceq w_6
  \circ w_1\\
  A_4 & \xrightarrow{\varphi_4} B_4, \hspace{5mm} \varphi_4 \ceq -
  w_7.
\end{align*}
>From the diagrams in steps 1 and 2 we obtain the equalities
\begin{align*}
  \varphi_3 \circ \alpha_2 &= w_6 \circ w_1 \circ \alpha_2 = w_6 \circ
  w_2 \circ \varphi_2 = w_6 \circ w_3 \circ f' \circ \varphi_2 = g'
  \circ f' \circ \varphi_2 = \beta_2 \circ \varphi_2\\
  \varphi_4 \circ \alpha_3 & = \varphi_4 \circ w_5 \circ w_1 = -w_7
  \circ w_5 \circ w_1 = \beta_3 \circ w_6 \circ w_1 = \beta_3 \circ
  \varphi_3\\
  \alpha_4 &= \Sigma \partial_2 \circ \partial_1 = \Sigma \partial_2'
  \circ \Sigma \varphi \circ \partial_1 = \Sigma \partial_2'
  \circ \partial_1' \circ (-w_7) = \beta_4 \circ \varphi_4
\end{align*}
of maps in $\T$.  This shows that the diagram
\begin{center}
  \begin{tikzpicture}[text centered]
    \node (X1) at (0,1.5){$A_1$};
    \node (X2) at (2,1.5){$A_2$};
    \node (X3) at (4,1.5){$A_3$};
    \node (X4) at (6,1.5){$A_4$};
    \node (X5) at (8,1.5){$\susp A_1$};
    
    \node (Y1) at (0,0){$A_1$};
    \node (Y2) at (2,0){$B_2$};
    \node (Y3) at (4,0){$B_3$};
    \node (Y4) at (6,0){$B_4$};
    \node (Y5) at (8,0){$\susp A_1$};
    
    \begin{scope}[->,font=\scriptsize,midway]
      \draw[-,double equal sign distance] (X1) -- (Y1);
      \draw (X2) -- node[right]{$\varphi_2$} (Y2);
      \draw (X3) -- node[right]{$\varphi_3$} (Y3);
      \draw (X4) -- node[right]{$\varphi_3$} (Y4);
      \draw[-,double equal sign distance] (X5) -- (Y5);
      
      \draw (X1) -- node[above]{$\alpha_1$} (X2);
      \draw (X2) -- node[above]{$\alpha_2$} (X3);
      \draw (X3) -- node[above]{$\alpha_3$} (X4);
      \draw (X4) -- node[above]{$\alpha_4$} (X5);
      
      \draw (Y1) -- node[above]{$\beta_1$} (Y2);
      \draw (Y2) -- node[above]{$\beta_2$} (Y3);
      \draw (Y3) -- node[above]{$\beta_3$} (Y4);
      \draw (Y4) -- node[above]{$\beta_4$} (Y5);
     \end{scope}
  \end{tikzpicture}
\end{center}
is a morphism of $4$-angles in $\C$, hence the first part of axiom
(N4*) is satisfied.

\subsection*{Step 4}
Choose a morphism $W \xrightarrow{\psi} Z$ in $\T$ such that the
mapping cone of the morphism
\begin{center}
  \begin{tikzpicture}[text centered]
    \node (X1) at (0,1.5){$A_2$};
    \node (X2) at (2,1.5){$A_3 \oplus B_2$};
    \node (X3) at (4,1.5){$W$};
    \node (X4) at (6,1.5){$\Sigma A_2$};
    
    \node (Y1) at (0,0){$A_2$};
    \node (Y2) at (2,0){$B_2$};
    \node (Y3) at (4,0){$Z$};
    \node (Y4) at (6,0){$\Sigma A_2$};
    
    \begin{scope}[->,font=\scriptsize,midway]
      \draw[-,double equal sign distance] (X1) -- (Y1);
      \draw (X2) -- node[right]{$\left[
          \begin{smallmatrix}
            0 & 1
          \end{smallmatrix}
        \right]$} (Y2);
      \draw[->,dashed] (X3) -- node[right]{$\psi$} (Y3);
      \draw[-,double equal sign distance] (X4) -- (Y4);
      
      \draw (X1) -- node[above]{$\left[
          \begin{smallmatrix}
            -\alpha_2 \\
            \hfill \varphi_2
          \end{smallmatrix}
        \right]$} (X2);
      \draw (X2) -- node[above]{$\left[
          \begin{smallmatrix}
            w_1 & w_2
          \end{smallmatrix}
        \right]$} (X3);
      \draw (X3) -- node[above]{$w_4$} (X4);
      
      \draw (Y1) -- node[above]{$\varphi_2$} (Y2);
      \draw (Y2) -- node[above]{$f''$} (Y3);
      \draw (Y3) -- node[above]{$\partial''_2$} (Y4);
     \end{scope}
  \end{tikzpicture}
\end{center}
is a triangle in $\T$:
\begin{equation*}
  A_3 \oplus B_2 \oplus A_2 \xrightarrow{\left[
      \begin{smallmatrix}
        -w_1 & -w_2 & 0\\
        0 & 1 & \varphi_2
      \end{smallmatrix}
    \right]} W \oplus B_2 \xrightarrow{\left[
      \begin{smallmatrix}
        -w_4 & 0\\
        \psi & f''
      \end{smallmatrix}
    \right]} \Sigma A_2 \oplus Z \xrightarrow{\left[
      \begin{smallmatrix}
        \hfill \Sigma \alpha_2 & 0\\
        -\Sigma \varphi_2 & 0\\
        1 & \partial_2''
      \end{smallmatrix}
    \right]} 
  \Sigma A_3 \oplus \Sigma B_2 \oplus \Sigma A_2.
\end{equation*}
The top row in the diagram is a triangle from step 2, whereas the
bottom row is $\Delta_2''$.  The $3$-$\Sigma$ sequence
\begin{equation*}
  A_3 \xrightarrow{-w_1} W \xrightarrow{\psi} Z \xrightarrow{- \Sigma
    \alpha_2 \circ \partial_2''} \Sigma A_3
\end{equation*}
is a direct summand of the mapping cone, and therefore a triangle in
$\T$.

\subsection*{Step 5}
By the $3 \times 3$ Lemma, there exists an object $U$ in $\T$ and maps
$u_1, \dots, u_5$ such that the diagram
\begin{center}
  \begin{tikzpicture}[text centered]
    \node (X2) at (2,1.5){$\Sigma^{-1} B_4$};
    \node (X3) at (4,1.5){$\Sigma^{-1} B_4$};
    
    \node (Y1) at (0,0){$A_3$};
    \node (Y2) at (2,0){$W$};
    \node (Y3) at (4,0){$Z$};
    \node (Y4) at (6,0){$\Sigma A_3$};
    
    \node (Z1) at (0,-1.5){$A_3$};
    \node (Z2) at (2,-1.5){$A_4 \oplus B_3$};
    \node (Z3) at (4,-1.5){$U$};
    \node (Z4) at (6,-1.5){$\Sigma A_3$};
    
    \node (U2) at (2,-3){$B_4$};
    \node (U3) at (4,-3){$B_4$};
    
    \begin{scope}[->,font=\scriptsize,midway]
      \draw[-,double equal sign distance] (Y1) -- (Z1);
      
      \draw (X2) -- node[right]{$w_3 \circ \Sigma^{-1} \partial_1'$}
        (Y2);
      \draw (Y2) -- node[right]{$\left[
          \begin{smallmatrix}
            w_5\\
            w_6
          \end{smallmatrix}
        \right]$} (Z2);
      \draw (Z2) -- node[right]{$\left[
          \begin{smallmatrix}
            \varphi_4 & - \beta_3
          \end{smallmatrix}
        \right]$} (U2);
      
      \draw (X3) -- node[right]{$\psi \circ w_3 \circ
        \Sigma^{-1} \partial_1'$} (Y3);
      \draw[->,dashed] (Y3) -- node[right]{$u_3$} (Z3);
      \draw[->,dashed] (Z3) -- node[right]{$u_5$} (U3);
      
      \draw[-,double equal sign distance] (Y4) -- (Z4);
            
      \draw[-,double equal sign distance] (X2) -- (X3);
      
      \draw (Y1) -- node[above]{$w_1$} (Y2);
      \draw (Y2) -- node[above]{$\psi$} (Y3);
      \draw (Y3) -- node[above]{$\Sigma \alpha_2 \circ \partial_2''$} (Y4);
      
      \draw (Z1) -- node[above]{$\left[
          \begin{smallmatrix}
            \alpha_3 \\
            \varphi_3
          \end{smallmatrix}
        \right]$} (Z2);
      \draw[->,dashed] (Z2) -- node[above]{$\left[
          \begin{smallmatrix}
            u_1 & u_2 \\
          \end{smallmatrix}
        \right]$} (Z3);
      \draw[->,dashed] (Z3) -- node[above]{$u_4$} (Z4);
      
      \draw[-,double equal sign distance] (U2) -- (U3);
     \end{scope}
  \end{tikzpicture}
\end{center}
commutes, and all columns and rows are triangles.  The second row is
isomorphic to the triangle from step 4, whereas the second column is
isomorphic to the right rotation of the third column in the octahedral
diagram in step 2.  By the octahedral axiom, there exist maps
$u_6,u_7$ such that the diagram
\begin{center}
  \begin{tikzpicture}[text centered]
    \node (X1) at (0,1.5){$\Sigma^{-1} B_4$};
    \node (X2) at (2,1.5){$Z$};
    \node (X3) at (4,1.5){$U$};
    \node (X4) at (6,1.5){$B_4$};
    
    \node (Y1) at (0,0){$\Sigma^{-1} B_4$};
    \node (Y2) at (2,0){$C_3$};
    \node (Y3) at (4,0){$B_4 \oplus C_3$};
    \node (Y4) at (6,0){$B_4$};
    
    \node (Z2) at (2,-1.5){$C_4$};
    \node (Z3) at (4,-1.5){$C_4$};
      
    \node (U2) at (2,-3){$\Sigma Z$};
    \node (U3) at (4,-3){$\Sigma U$};
    
    \begin{scope}[->,font=\scriptsize,midway]
      \draw[-,double equal sign distance] (X1) -- (Y1);
      
      \draw (X2) -- node[right]{$g''$} (Y2);
      \draw (Y2) -- node[right]{$\gamma_3$} (Z2);
      \draw (Z2) -- node[right]{$\partial_1''$} (U2);
      
      \draw[->,dashed] (X3) -- node[right]{$\left[
          \begin{smallmatrix}
            u_5\\
            u_6
          \end{smallmatrix}
        \right]$} (Y3);
      \draw[->,dashed] (Y3) -- node[right]{$\left[
          \begin{smallmatrix}
            u_7 & \gamma_3
          \end{smallmatrix}
        \right]$} (Z3);
      \draw (Z3) -- node[right]{$\Sigma u_3 \circ \partial_1''$} (U3);
      
      \draw[-,double equal sign distance] (X4) -- (Y4);
            
      \draw (X1) -- node[above]{$\psi \circ w_3 \circ
        \Sigma^{-1} \partial_1'$} (X2);
      \draw (X2) -- node[above]{$u_3$} (X3);
      \draw (X3) -- node[above]{$u_5$} (X4);
      
      \draw (Y1) -- node[above]{$0$} (Y2);
      \draw (Y2) -- node[above]{$\left[
          \begin{smallmatrix}
            0 \\
            1
          \end{smallmatrix}
        \right]$} (Y3);
      \draw (Y3) -- node[above]{$\left[
          \begin{smallmatrix}
            1 & 0
          \end{smallmatrix}
        \right]$} (Y4);
      
      \draw[-,double equal sign distance] (Z2) -- (Z3);
      
      \draw (U2) -- node[above]{$\Sigma u_3$} (U3);
     \end{scope}
  \end{tikzpicture}
\end{center}
commutes, all rows and columns are triangles, and $\partial_1'' \circ
u_7 = \Sigma \psi \circ \Sigma w_3 \circ \partial_1'$.  The first row
is a triangle from the diagram above, the second row is a direct sum
of trivial triangles, and the second column is $\Delta_1''$.

\subsection*{Step 6}
Define maps
\begin{align*}
  B_3 & \xrightarrow{\psi_1} C_3, \hspace{5mm} \psi_1 \ceq u_6 \circ
  u_2\\
  A_4 & \xrightarrow{\psi_2} C_3, \hspace{5mm} \psi_2 \ceq u_6 \circ
  u_1\\
  B_4 & \xrightarrow{\psi_3} C_4, \hspace{5mm} \psi_3 \ceq u_7,
\end{align*}
and consider the diagram
\begin{center}
  \begin{tikzpicture}[text centered]
    \node (A4B3) at (1,1.5){$A_4 \oplus B_3$};
    \node (B4C3) at (5,1.5){$B_4 \oplus C_3$};    
    \node (A3) at (-1,0){$A_3$};
    \node (U) at (3,0){$U$};
    \node (C4) at (7,0){$C_4$};      
    \node (T1) at (1,0.8){$\Delta_2'''$};
    \node (T2) at (5,0.8){$\Delta_1'''$};
    \node (T3) at (3,1.2){$\Lambda$};
        
    \begin{scope}[->,font=\scriptsize,midway]
      \draw (A3) -- node[above left]{$\left[
          \begin{smallmatrix}
            \alpha_3 \\
            \varphi_3
          \end{smallmatrix}
        \right]$} (A4B3);
      \draw (A4B3) -- node[above right]{$\left[
          \begin{smallmatrix}
            u_1 & u_2
          \end{smallmatrix}
        \right]$} (U);
      \draw (U) -- node[above left]{$\left[
          \begin{smallmatrix}
            u_5 \\
            u_6
          \end{smallmatrix}
        \right]$} (B4C3);
      \draw (B4C3) -- node[above right]{$\left[
          \begin{smallmatrix}
            \psi_3 & \gamma_3
          \end{smallmatrix}
        \right]$} (C4);
      \draw (A4B3) -- node[above]{$\left[
          \begin{smallmatrix}
            \varphi_4 & - \beta_3 \\
            \psi_2 & \psi_1
          \end{smallmatrix}
      \right]$} (B4C3);
      \draw[|->] (U) -- node[above]{$u_4$} (A3);
      \draw[|->] (C4) -- node[above]{$\Sigma u_3 \circ \partial_1''$}
        (U);
    \end{scope}  
  \end{tikzpicture}
\end{center}
in $\T$.  From the two diagrams in step 5, we know that $\Delta_1'''$
and $\Delta_2'''$ are triangles in $\T$.  Moreover, from the
commutativity of the bottom square of the top diagram in step 5, we
see that the triangle $\Lambda$ commutes.  Composition along the lower
edge gives
\begin{equation*}
  \Sigma u_4 \circ \Sigma u_3 \circ \partial_1'' =  \Sigma^2 \alpha_2
  \circ \Sigma \partial_2'' \circ \partial_1'' = \susp \alpha_2 \circ
  \gamma_4,
\end{equation*}
where we have used the commutativity of the rightmost square of the
top diagram in step 5.  By definition, the diagram therefore represents
a $4$-angle
\begin{equation*}
  A_3 \xrightarrow{\left[
      \begin{smallmatrix}
        \alpha_3\\
        \varphi_3
      \end{smallmatrix}
    \right]} A_4 \oplus B_3 \xrightarrow{\left[
      \begin{smallmatrix}
        \varphi_4 & -\beta_3\\
        \psi_2 & \hfill \psi_1
      \end{smallmatrix}
    \right]} B_4 \oplus C_3 \xrightarrow{\left[
      \begin{smallmatrix}
        \psi_3 & \gamma_3\\
      \end{smallmatrix}
    \right]} C_4 \xrightarrow{\susp \alpha_2 \circ \gamma_4} \susp
  A_3
\end{equation*}
in $\C$.  Finally, from the diagrams in steps 2,4 and 5 we obtain
\begin{equation*}
  \gamma_4 \circ \psi_3 = \Sigma \partial_2'' \circ \partial_1'' \circ
  u_7 = \Sigma \partial_2'' \circ \Sigma \psi \circ \Sigma w_3
  \circ \partial_1' = \Sigma w_4 \circ \Sigma w_3 \circ \partial_1' =
  \Sigma^2 \alpha_1 \circ \Sigma \partial_2' \circ \partial_1' = \susp
  \alpha_1 \circ \beta_4.
\end{equation*}
The conclusion here in step 6, together with that in step 3, shows
that the $4$-angulated category $\C$ satisfies the higher octahedral
axiom (N4*).

\section*{Acknowledgements}

We would like to thank Bernhard Keller, Henning Krause and Steffen Oppermann for valuable suggestions and comments.

\bibliographystyle{plain}

\end{document}